\newtheorem{theorem}{Theorem}[section]
\newtheorem{proposition}[theorem]{Proposition}
\newtheorem{corollary}[theorem]{Corollary}
\newtheorem{lemma}[theorem]{Lemma}
\theoremstyle{definition}
\newtheorem{definition}[theorem]{Definition}
\theoremstyle{remark}
\newtheorem{remark}[theorem]{Remark}
\numberwithin{equation}{section}
\newlist{enumlemma}{enumerate}{1}
\setlist[enumlemma]{label={(\alph*)}, ref=\thelemma(\alph*)}
\newlist{enumlemmaresume}{enumerate}{1}
\setlist[enumlemmaresume]{label={(\alph*)}, ref=\thelemma(\alph*), start={\value{enumlemmai}+1}}
\newlist{enumproposition}{enumerate}{1}
\setlist[enumproposition]{label={(\alph*)}, ref=\theproposition(\alph*)}
\newlist{enumcorollary}{enumerate}{1}
\setlist[enumcorollary]{label={(\alph*)}, ref=\thecorollary(\alph*)}
\newlist{enumcorollaryresume}{enumerate}{1}
\setlist[enumcorollaryresume]{label={(\alph*)}, ref=\thecorollary(\alph*), start={\value{enumcorollaryi}+1}}
\providecommand{\N}{{\mathbb N}}
\providecommand{\R}{{\mathbb R}}
\providecommand{\id}{I}
\providecommand{\rv}{\sigma}
\providecommand{\sourcecondf}{\norm{f}_{\mu} \le R}
\providecommand{\sourcecondg}{\norm{g}_{\mu +1/2} \le R}
\providecommand{\baloracle}{\tau_{\mathfrak{b}}}
\providecommand{\predrate}{\mathcal{R}^{*, \mathrm{pred}}_{\mu,p,R}(\delta)}
\providecommand{\recrate}{\mathcal{R}^{*, \mathrm{rec}}_{\mu,p,R}(\delta)}
\providecommand{\predoracle}{t_{\mathrm{e}}^\mathrm{pred}}
\providecommand{\recoracle}{t_{\mathrm{e}}^\mathrm{rec}}
\providecommand{\eqrefAssPSDlower}{\textup{(\hyperref[AssPSD]{\textbf{PSD}($p$,$c_A$,$\infty$)})}}
\providecommand{\eqrefAssPSDupper}{\textup{(\hyperref[AssPSD]{\textbf{PSD}($p$,$0$,$C_A$)})}}
\renewcommand{\le}{\leqslant}
\renewcommand{\ge}{\geqslant}
\DeclareMathOperator{\E}{{\mathbb E}}
\DeclareMathOperator{\PP}{{\mathbb P}}
\DeclareMathOperator*{\argmin}{argmin}
\DeclareMathOperator{\Pol}{Pol}
\DeclareMathOperator{\rk}{rank}
\DeclareMathOperator{\tr}{tr}
\DeclareMathOperator{\Var}{Var}
\DeclareMathOperator{\Exp}{Exp}
\DeclarePairedDelimiter{\abs}{\lvert}{\rvert}
\newcommand{\bigabs}[1]{\abs[\big]{#1}}
\DeclarePairedDelimiter{\norm}{\lVert}{\rVert}
\newcommand{\bnorm}[1]{\norm[\Big]{#1}}
\DeclarePairedDelimiterX{\scapro}[2]{\langle}{\rangle}{{#1},{#2}}
\newcommand{\bscapro}[2]{\scapro[\Big]{#1}{#2}}
\DeclarePairedDelimiter{\floor}{\lfloor}{\rfloor}
\DeclarePairedDelimiter{\ceil}{\lceil}{\rceil}
\begin{document}

\title{Early stopping for conjugate gradients in\\ statistical inverse problems}

\author{\parbox[t]{13cm}{\centering Laura Hucker\textsuperscript{$*$} and Markus Rei\ss\textsuperscript{$\dagger$}\\[\baselineskip]
{\small Humboldt-Universit{\"a}t zu Berlin, Germany}\\
{\small \mbox{} \textsuperscript{$*$}huckerla@math.hu-berlin.de} \quad
{\small\textsuperscript{$\dagger$}mreiss@math.hu-berlin.de}}}

\date{}

\maketitle

\begin{abstract}
We consider estimators obtained by iterates of the conjugate gradient (CG) algorithm applied to the normal equation of prototypical statistical inverse problems. Stopping the CG algorithm early induces regularisation, and optimal convergence rates of prediction and reconstruction error are established in wide generality for an ideal oracle stopping time. Based on this insight, a fully data-driven early stopping rule $\tau$ is constructed, which also attains optimal rates, provided the error in estimating the noise level is not dominant.

The error analysis of CG under statistical noise is subtle due to its nonlinear dependence on the observations. We provide an explicit error decomposition and identify two terms in the prediction  error, which share important properties of classical bias and variance terms. Together with a continuous interpolation between CG iterates, this paves the way for a comprehensive error analysis of early stopping. In particular, a general oracle-type inequality is proved for the prediction error at $\tau$. For bounding the reconstruction error, a more refined probabilistic analysis, based on concentration of self-normalised Gaussian processes, is developed. The methodology also provides some new insights into early stopping for CG in deterministic inverse problems. A numerical study for standard examples shows good results in practice for early stopping at $\tau$.%

\addvspace{0.5\baselineskip}\noindent%
{\footnotesize \emph{Keywords:} linear inverse problems, conjugate gradients, partial least squares, early stopping, discrepancy principle, adaptive estimation, oracle inequalities}

\addvspace{0.5\baselineskip}\noindent%
{\footnotesize \emph{MSC Classification:} 65J20, 65F10, 62G05}
\end{abstract}


\section{Introduction} \label{SecIntroduction}

The conjugate gradient (CG) algorithm is arguably one of the computationally most efficient off-the-shelf methods for solving systems of linear equations. Like for standard gradient descent methods, stopping the algorithm \emph{early}, that is before terminating at the solution, induces a regularisation. In the context of deterministic ill-posed inverse problems the seminal work by \citet{Nem1986} and \citet{Han1995} has shown that a stopping criterion based on the discrepancy principle can lead to optimal convergence rates under bounded noise perturbations. Despite its importance in applications, the case of statistical noise, that is random perturbations, has been understood only partially for CG so far. Major reasons are the nonlinearity of the CG algorithm and that canonical Gaussian white noise $\xi$ on an infinite-dimensional Hilbert space has infinite norm $\norm{\xi}=\infty$ almost surely. Therefore, the discrepancy principle cannot be well-defined.

One way out is to modify the stopping criterion so that the noise in the residual is smoothed out. \citet{BlaMat2012} build on this idea and use Gaussian concentration together with the deterministic CG analysis to derive convergence rates, which are, however, suboptimal in terms of a-posteriori error control. In the more involved kernel learning setting a similar approach is taken by \citet{BlaKra2016}, defining the CG algorithm in the underlying energy norm, which regularises the noise. The main point there is to establish good convergence properties of the best iterate along the entire iteration path and then leave the choice of the iterate to a-posteriori model selection on all iterates. An interesting extension to time series data is provided by \citet{SinKriMun2017}. In the statistics and machine learning community CG-type methods are very popular under the name \emph{partial least squares}, see the excellent survey by \citet{RosKra2006} and the recent paper by \citet{FinKri2023}.

Our approach to conjugate gradients for statistical inverse problems is intrinsically different. We consider the standard CGNE (\emph{conjugate gradients for the normal equation}) algorithm, which when implemented is necessarily finite-dimensional, and analyse it carefully, keeping explicitly track of the underlying dimension $D$. This is in line with the approach by \citet{BlaHofRei2018EJS,BlaHofRei2018SIAM} for linear spectral methods, which has been compared to smoothed residual approaches by \citet{Sta2020}. Due to the nonlinear dependence on the noise, however, the analysis must be undertaken for every noise realisation and often requires more sophisticated arguments. This precise nonasymptotic analysis provides also for unspecified perturbations $\xi$ in the data new structural insights  for the CGNE algorithm applied to inverse problems. Throughout the text a deterministic noise and a Gaussian white noise model are considered as concrete examples. The deterministic noise results are essentially known in the literature but obtained differently and thus serve as an easy proof of concept. The  main contributions of this work, when specified to the Gaussian model, are the following:
\begin{itemize}
\item Introducing interpolation between CG iterates, we prevent overshooting and avoid the analysis of update polynomials.
\item We identify two random quantities, called \emph{stochastic error} and \emph{approximation error}, which share important properties of variance and bias and allow for a precise nonasymptotic CG error control.
\item Equilibrating stochastic and approximation error, an oracle stopping time achieves optimal prediction error control under minimal assumptions on the unknown signal and on the noise. In particular, under mild singular value conditions, the best CG iterate will be as good as the best iterate in the prototypical \emph{Showalter method}, that is standard gradient descent flow.
\item We construct a data-driven stopping rule that satisfies an oracle-type inequality for the prediction error and achieves optimal convergence rates whenever the error in estimating the noise level $\norm{\xi}^2$ is not dominant.
\item The convergence rates transfer to the reconstruction error, thus establishing minimax optimality of the best iterate along the CG iteration path and -- under restrictions on the dimension $D$ -- of the iterate selected by our early stopping criterion.
\end{itemize}

In the following \cref{SecModel}, the setting is introduced. \cref{SecOverviewMainResults} explains the main ideas and results of the paper. Afterwards, we progress in detail. Results for the interpolated CG iterates are presented in  \cref{SecInterpolConjGradientMethod}, and the prediction error is analysed in \cref{SecAnalysisPredError}. \cref{SecEarlyStopWeakNorm} derives our stopping criterion and proves an oracle-type inequality for its prediction error. The results are transferred to the reconstruction error in \cref{SecTransferReconstructionError}. The practical performance for some standard examples in \cref{SecNumericalIllustration} is in good agreement with the theory. Technical results and proofs are delegated to \cref{SecAppendix}.


\section{The setting} \label{SecModel}

We consider the classical prototypical model for statistical inverse problems, see e.g.\ \citet{Cav2011} for a survey. We observe
\begin{equation}
Y = g+\xi = Af+\xi, \label{EqModel}
\end{equation}
where $A: H_1 \to H_2$ is a known bounded linear operator between real Hilbert spaces $H_1$ and $H_2$, $f \in H_1$ is the unknown signal of interest and $\xi$ is an error term. In practice, the problem is always discretised such that we assume $H_1 = \R^P$, $H_2 = \R^D$ with $D \ge 3$ and $\xi$ to be $\R^D$-valued. Suppose that $A: \R^P \to \R^D$ is surjective and thus $\rk(A)=D \le P$, which can always be achieved by restricting the observations $Y$ to the range of $A$.  We wish to recover the minimum-norm least-squares solution $f^{\dagger} \coloneqq A^\dagger g = A^\top (AA^\top)^{-1} g$ of $g=Af$ from the observation $Y$ by applying the CG iterates as defined in \cref{SecOverviewMainResults}.

For the analysis (not the algorithm), the model \eqref{EqModel} can be transformed using the singular value decomposition (SVD) of $A$: Let $\lambda_1 \ge \dots \ge \lambda_D > 0$ be the nonzero singular values of $A$. Denote by $u_1,\dots,u_D$ and $v_1,\dots,v_P$ the corresponding left- and right-singular vectors, respectively. Then we obtain the equivalent sequence space model \cite[Eq.~(1.5)]{Cav2011}
\begin{equation*}
Y_i = g_i + \xi_i = \lambda_i f_i + \xi_i, \quad i=1,\dots,D,
\end{equation*}
with $Y_i \coloneqq \scapro{Y}{u_i}$, $g_i \coloneqq \scapro{g}{u_i}$, $f_i \coloneqq \scapro{f}{v_i}$ and $\xi_i \coloneqq \scapro{\xi}{u_i}$. Sometimes we have to consider also the $d\le D$ \emph{distinct} singular values $\tilde\lambda_1>\dots>\tilde\lambda_d>0$. Here and later $\norm{\cdot}$ always denotes the Euclidean norm  and  $\scapro{\cdot}{\cdot}$ the Euclidean inner product.

When specifying convergence rates, we will consider signals $f$ satisfying the Sobolev-type source condition
\begin{equation}
\norm{f}_{\mu} \coloneqq \Big(\sum_{i=1}^D \lambda_i^{-4\mu} f_i^2\Big)^{1/2} \le R \label{EqDefSourceCond}
\end{equation}
for $\mu>0$, $R\ge 1$. Note that $f^{\dagger}_i \coloneqq \scapro{f^{\dagger}}{v_i}=f_i$ for $i=1,\dots,D$ such that $\norm{f^{\dagger}}_{\mu} = \norm{f}_{\mu}$ and the respective source conditions agree. The assumption $\sourcecondf$ on $f$ corresponds to $\sourcecondg$ on $g=Af$.

For some results we will suppose that the inverse problem is mildly ill-posed with singular values $(\lambda_i)$ satisfying the \emph{polynomial spectral decay} assumption
\begin{equation*}
c_A i^{-p} \le \lambda_i \le C_A i^{-p}, \quad i=1,\dots,D, \tag{\textbf{PSD}($p$,$c_A$,$C_A$)} \label{AssPSD}
\end{equation*}
where $p \ge 0$ is fixed and $0 \le c_A \le C_A \le \infty$. If $C_A=\infty$ or $c_A=0$, we explicitly write \eqrefAssPSDlower{} or \eqrefAssPSDupper{}, respectively.
Throughout the paper, \mbox{(in-)}equalities up to absolute constants are denoted by `$\cong$', `$\lesssim$', `$\gtrsim$'. Further dependencies on the constants will be highlighted by additional indices.
Bounds of the type $B_1\lesssim_{c_A} B_2$, $B_3\lesssim_{C_A} B_4$ indicate that they hold true already under a one-sided condition, i.e.\ for $C_A=\infty$ and $c_A=0$, respectively.

First results will be obtained for any error $\xi$ or  under minor assumptions, see e.g.\ the minimax bounds of  \cref{ThmWeakMinimax} in conjunction with \cref{RemNoise2Moments}.
To analyse early stopping under random noise later, we often specify to the \emph{Gaussian noise model}
\begin{equation*}
\xi \coloneqq \delta Z \quad \text{with} \quad Z \sim \mathcal{N}(0,\id_D) \quad \text{and} \quad \delta \in(0,1), \tag{\textbf{GN}} \label{AssGN}
\end{equation*}
where we denote by $\id_D$ the identity matrix in $\R^{D \times D}$.
In contrast, in the classical setting of deterministic inverse problems, we study noise $\xi$ with
\begin{equation*}
\norm{Y-Af} = \norm{\xi} \le \delta \quad \text{and} \quad \delta \in(0,1). \tag{\textbf{DN}} \label{AssDN}
\end{equation*}
Alternatively, if $\xi \sim \mathcal{N}(0,\Sigma)$ with $\tr(\Sigma)\le \delta^2$, then $\E[\norm{\xi}^2] \le \delta^2$ and the results for \eqref{AssDN} will hold in a similar manner by taking expectations.

Let us comment on the relationship between our a-priori discrete theory and the potentially underlying continuous inverse problem. The CG algorithm is always applied to a discretised problem, and our analysis will provide nonasymptotic results, which are directly interpretable. In order to assess the size of different terms, we then also adopt a classical asymptotic view, where the noise level $\delta$ tends to zero and simultaneously the dimensions $D=D(\delta), P=P(\delta)$ tend to infinity. As $D(\delta), P(\delta) \to\infty$, we may approach a continuous inverse problem in terms of an operator $A$ and a function $f$ such that the source condition \eqref{EqDefSourceCond} and the spectral decay assumption \ref{AssPSD} may a-priori vary with the discretisation $D$, and we refer to \cref{RemMinimaxTruncationTimeWeak} below for concrete consequences.

To avoid special cases, we will  suppose throughout the paper that
\begin{equation*}
\sum_{j=1,\dots,D: \, \lambda_j=\lambda_i} Y_j^2 > 0, \quad i=1,\dots,D. \tag{\textbf{Y}} \label{AssY}
\end{equation*}
Under \eqref{AssGN}, this is almost surely the case. See \cref{RemSumYnonZero} for a discussion of this assumption.

Let us introduce some further notation. Applying standard functional calculus for symmetric matrices, we
define
\begin{equation*}
\norm{\mathsf{p}}_{Y} \coloneqq \norm{p( AA^\top)Y},\quad \scapro{\mathsf{p}}{\mathsf{q}}_Y \coloneqq \scapro{p(AA^\top)Y}{q(AA^\top)Y}
\end{equation*}
for polynomials $p,q$ in the set $\Pol_k$ of real polynomials with maximal degree $k$. By Assumption~\eqref{AssY}, $\norm{\mathsf{p}}_{Y}$ is indeed a norm and $\scapro{\mathsf{p}}{\mathsf{q}}_Y$ an inner product on $\Pol_k$ for $k<d$. For $k=d$ both lack positive definiteness.
Consider the linear and affine subspaces
\begin{equation*}
\Pol_{k,0} \coloneqq \big\{ p \in \Pol_k \,\big|\, p(0)=0 \big\}, \quad
\Pol_{k,1} \coloneqq \big\{ p \in \Pol_k \,\big|\, p(0)=1 \big\}.
\end{equation*}
We often abbreviate $h(AA^\top)v$ for functions $h$ and $v \in \R^D$ by $\mathsf{h}v$. By abuse of notation, we sometimes write the value of a function $h$ evaluated at $x$ instead of the function itself. For example, $\norm{\mathsf{x^{1/2}}v} = \norm{\mathsf{h}v} = \norm{(AA^\top)^{1/2}v}$ with $h(x)=x^{1/2}$, $x\ge 0$.

Denote by $\delta_{l}$ the Dirac measure in $l \in \R$ and put $\delta_{lk} \coloneqq \delta_l(\{ k \})$ for $l,k \in \R$. We set $a\wedge b=\min(a,b)$, $a\vee b=\max(a,b)$ and $a_+=a\vee 0$ for $a,b\in\R$.


\section{Overview of main results} \label{SecOverviewMainResults}

The original CG method \cite{HesSti1952} is used for numerically solving systems of linear equations, where the coefficient matrix is symmetric and positive (semi-)definite. This is not necessarily the case in our model. Following the standard technique \cite[Section~2.3]{Han1995}, we apply the iterative CG algorithm to the normal equation $A^\top A \widehat f = A^\top Y$ \cite[Algorithm~7.4.1]{Bjo1996} and choose the initial value $\widehat f_0=0$. The iterates $\widehat f_k$, for $k=0,\dots,d$ or up to some stopping criterion, yield our estimators of $f$.
For the theoretical analysis in this paper, the estimators are defined via minimising \emph{residual polynomials}, which emerge from the theory of Krylov subspace methods \cite[Section~2]{Han1995}.

\begin{definition} \label{DefEstimators}
At iteration $k = 0,\dots,d$, the \emph{prediction estimator} $\widehat{g}_k$ of $g = Af$ and the \emph{CG estimator} $\widehat{f}_k$ of the minimum-norm solution $f^\dagger$ of $g=Af$  are given by
\begin{equation*}
\widehat{g}_k \coloneqq (\id_D -r_k(AA^\top))Y \quad \text{and} \quad \widehat{f}_k \coloneqq A^{\dagger}\widehat{g}_k,
\end{equation*}
where the \emph{residual polynomial} $r_k$ is obtained as
\begin{equation*}
r_k \coloneqq \argmin_{p_k \in \Pol_{k,1}} \norm{p_k(AA^\top)Y}^2 = \argmin_{p_k \in \Pol_{k,1}} \norm{\mathsf{p_k}}_Y^2.
\end{equation*}
\end{definition}

In the case $k=d$, by \cref{LemPropInterpolResidualPolynomials:simpleZeros:noninterpol}, $r_d$ maps all squared nonzero singular values of $A$ to zero implying $\widehat{f}_d = A^\dagger Y$. Thus, the CG estimator at iteration~$d$ is a minimum-norm solution of \eqref{EqModel} and  $d$ is the maximal iteration index.

One new idea is to allow also for noninteger iteration indices $t \in [0,d]$ by interpolating linearly between the residual polynomials and thus also between the estimators (see \cref{DefInterpolEstimators}). The time-continuous residual polynomials have useful properties, which are discussed in \cref{SecInterpolConjGradientMethod} in detail.

We start by considering the prediction error  $\norm{A(\widehat{f}_t-f)}^2 = \norm{\widehat{g}_t-g}^2$, i.e.\ the error of the prediction estimator $\widehat{g}_t=A\widehat{f}_t$. The prediction error is the intrinsic metric for the minimisation property of CG as well as for a-posteriori error control based on the residuals. Classically, interpolation  often allows to transfer prediction results to the reconstruction (or estimation) error $\norm{\widehat{f}_t-f^{\dagger}}^2$ \cite[Section~4.3]{EngHanNeu1996}. While in  deterministic situations the prediction error is sometimes also analysed for its own sake \cite[Section~4.5]{Han2010}, optimal regularisation is essential for prediction in statistical inverse problems and there is  a strong genuine interest in the analysis of the prediction error.

Since the estimator $\widehat{g}_t$ depends nonlinearly on the observation $Y$, we cannot calculate a standard bias-variance decomposition, but need to analyse the prediction error for any realisation of $\xi$. In \cref{PropWeakLoss}, we are able to decompose the error nonasymptotically and obtain for $t \in [0,d]$ the bound
\begin{equation}
\norm{A(\widehat{f}_t-f)}^2 \le 2(A_{t,\lambda}+S_{t,\lambda}), \label{EqWeakErrorBoundOverview}
\end{equation}
where $S_{t,\lambda}$ and $A_{t,\lambda}$ are defined in \cref{DefErrorTerms}. The subscript $\lambda$ indicates that the terms correspond to the prediction error, which involves the singular values of $A$, in line with the notation in \cite{BlaHofRei2018EJS,BlaHofRei2018SIAM}. As shown in \cref{LemPropErrorTerms}, $S_{t,\lambda}$ and $A_{t,\lambda}$ share important properties of stochastic and approximation errors. In particular, the \emph{stochastic error term} $S_{t,\lambda}$ is continuously increasing from $S_{0,\lambda} = 0$ to $S_{d,\lambda} = \norm{\xi}^2$. The \emph{approximation error term} $A_{t,\lambda}$ is also continuous and has a relatively tight upper bound, which is  decreasing from $A_{0,\lambda}=\norm{Af}^2$ to $A_{d,\lambda}=0$. These properties are the key for our analysis.

Given the monotonicity properties of $S_{t,\lambda}$ and of the upper bound of $A_{t,\lambda}$, it is natural to consider the following sequential \emph{oracle} index.

\begin{definition} \label{DefBalancedOracle}
The (data-dependent) {\em balanced oracle} index is given by
\begin{equation*}
\baloracle \coloneqq \inf\{t\in[0,d]\,|\,A_{t,\lambda}\le S_{t,\lambda}\}.
\end{equation*}
\end{definition}

Due to the continuity of the interpolations $A_{t,\lambda}$ and $S_{t,\lambda}$ in $t$ and $A_{0,\lambda} \ge S_{0,\lambda}$, $A_{d,\lambda} \le S_{d,\lambda}$, the index $\baloracle \in [0,d]$ exists and the errors are indeed \emph{balanced} at $\baloracle$, meaning that $A_{\baloracle,\lambda}=S_{\baloracle,\lambda}$. 

The choice is motivated by the fact that the bound $2(A_{\baloracle,\lambda} + S_{\baloracle,\lambda})$ of the prediction error at the balanced oracle is by the monotonicity of $t \mapsto S_{t,\lambda}$ at most twice as large as the minimal prediction error bound:
\begin{equation*}
A_{\baloracle,\lambda}+S_{\baloracle,\lambda} = 2 S_{\baloracle,\lambda} \le 2 \inf_{t \in [0,d]} (A_{t,\lambda} \vee S_{t,\lambda}) \le 2\inf_{t \in [0,d]} (A_{t,\lambda}+S_{t,\lambda}).
\end{equation*}
The balanced oracle $\baloracle$ serves as a theoretical benchmark, not accessible to the user, that we shall try to mimic in a data-driven way, see \citet[Section~1.8]{Tsy2009} for a discussion of the oracle concept.

Assume \eqref{AssPSD} with $p>1/2$ and Gaussian noise \eqref{AssGN}. In \cref{ThmWeakMinimax}, we show that the expected stochastic error $\E[S_{\baloracle,\lambda}]$ and the prediction estimator $\widehat{g}_{\baloracle}$ at the balanced oracle attain the risk (mean squared error) rate
\begin{equation}
\predrate \coloneqq R^{2/(4\mu p +2p + 1)} \delta^{(8\mu p + 4p)/(4 \mu p +2p+1)} \label{EqDefPredRate}
\end{equation}
over Sobolev-type ellipsoids, i.e.
\begin{equation}
\sup_{f: \, \sourcecondf} \E\big[\norm{A(\widehat f_{\baloracle}-f)}^2\big]=\sup_{g: \, \sourcecondg} \E\big[\norm{\widehat g_{\baloracle}-g}^2\big]
\lesssim_{\mu,p,C_A} \predrate. \label{EqBalOraclePredErrorMinimaxOverview}
\end{equation}
From an asymptotic point of view, where the dimension $D=D(\delta) \to \infty$ as $\delta \to 0$, this rate is \emph{minimax optimal}
if $D(\delta)$ grows faster than $(R^2 \delta^{-2})^{1/(4 \mu p +2p+1)}$ as shown in \cref{PropWeakErrorLowerBound}. The proof of the upper bound is very general and allows extensions to other source conditions and noise specifications, see \cref{RemNoise2Moments}. It exhibits close parallels to Showalter's method or standard gradient flow, see \cref{RemShowalter}. This is the first time that the CGNE iteration path is shown to achieve the optimal prediction error rate under \eqref{AssGN} at some iterate.

For deterministic noise of level $\delta$, the prediction error at the least-squares solution (i.e.\ at the last iteration $\widehat f_d$) is $\delta^2$. Under \eqref{AssGN}, however, this becomes $\E[\norm{A(\widehat f_{d}-f)}^2]=\delta^2D$ and regularisation by early stopping is essential for larger dimensions $D$ exactly as in nonparametric regression. Still, in case of small $D \ll (R^2 \delta^{-2})^{1/(4 \mu p +2p+1)}$, the estimation problem becomes in a statistical sense parametric and no regularisation is needed. The rate $\delta^2 D$, attained by $\widehat{f}_d$, is then minimax optimal for the prediction error, see \cref{RemMinimaxTruncationTimeWeak}.

To define a data-driven stopping rule, we monitor the squared \emph{residual norm} (also called \emph{discrepancy}, \emph{data misfit} or \emph{training error})
\begin{equation*}
R_t^2 \coloneqq \norm{A\widehat{f}_t-Y}^2 = \norm{r_t(AA^\top)Y}^2
\end{equation*}
at each iteration step, which is greedily minimised in the CG construction. Similarly to \citet{BlaHofRei2018EJS,BlaHofRei2018SIAM}, but for a random balanced oracle $\baloracle$, we  mimic the balanced oracle by a discrepancy-type principle.

\begin{definition} \label{DefEarlyStopRule}
For a suitable critical value $\kappa > 0$, introduce the early stopping rule
\begin{equation*}
\tau \coloneqq \inf\big\{t \in [0,d] \,\big|\,R_t^2\le \kappa\big\}.
\end{equation*}
\end{definition}
Under \eqref{AssDN}, setting $\kappa \coloneqq c^2 \delta^2$ for an a-priori fixed constant $c>1$, this coincides with the well-known discrepancy principle \citep[Sections~4.3 and~7.3]{EngHanNeu1996}. Note that by the linear interpolation, $R_t^2$ is continuous in $t$ with $R_0^2 = \norm{Y}^2$ and $R_d^2=0$. Hence, $\tau \in [0,d]$ exists and we have the identity $R_{\tau}^2=\kappa$, provided $\kappa \le \norm{Y}^2$. The balanced oracle can also be expressed in terms of the residual norm, and in \cref{RemDiscrepancyPrinciple}, the relation between the stopping indices $\tau$ and $\baloracle$ is clarified.

\cref{ThmWeakOracleIneqEarlyStop} yields for any signal $f$ an oracle-type inequality under Gaussian noise \eqref{AssGN}, relating the prediction error at $\tau$ to the expected minimal prediction error bound:
\begin{equation}
\E\big[\norm{A(\widehat f_\tau-f)}^2\big] \lesssim \inf_{t \in [0,d]} \E[A_{t,\lambda}+S_{t,\lambda}] +\delta^2\sqrt D + \abs{\kappa - \delta^2 D}. \label{EqBalOracleIneqOverview}
\end{equation}
Note that we do not have a classical oracle inequality in terms of the oracle error $\inf_{t \in [0,d]} \E[\norm{A(\widehat{f}_t-f)}^2]$, since the decomposition into the two error terms in \eqref{EqWeakErrorBoundOverview} gives only an upper bound. The dimension-dependent error term $\delta^2 \sqrt{D}$ is due to the variability of the Gaussian noise $\xi$ and unavoidable in the linear case of truncated SVD estimation \cite{BlaHofRei2018EJS}. Consequently, the early stopped estimator $\widehat{g}_{\tau}$ has the same error rate as $\widehat{g}_{\baloracle}$ if the remainder term is small enough. In this case of a-posteriori error control, we say that we can \emph{adapt} to the balanced oracle. In practice, the noise level $\delta$ needs to be estimated, and the threshold $\kappa$ is allowed to deviate from $\E[\norm{\xi}^2]=\delta^2 D$.

The nonasymptotic oracle inequality \eqref{EqBalOracleIneqOverview} does not require to fix any source conditions on $f$, yet allows  a straight-forward derivation of adaptive asymptotic error bounds, see \citet[Section~1.3]{Cav2011} for more details on this approach. Assume again \eqref{AssPSD} with $p>1/2$ and \eqref{AssGN}. If the remainder term $\delta^2 \sqrt{D} + \abs{\kappa - \delta^2 D}$ is at most of the order of the optimal rate, the bound \eqref{EqBalOraclePredErrorMinimaxOverview} directly carries over such that
\begin{equation*}
\sup_{f: \, \sourcecondf} \E\big[\norm{A(\widehat f_{\tau}-f)}^2\big]=\sup_{g: \, \sourcecondg} \E\big[\norm{\widehat{g}_{\tau}-g}^2\big] \lesssim_{\mu,p,C_A} \predrate,
\end{equation*}
where $\predrate$ is given in \eqref{EqDefPredRate}, and the optimal rate of convergence is attained. The early stopping rule $\tau$ is \emph{minimax adaptive} for the prediction error over Sobolev-type ellipsoids for all regularities $\mu>0$ with $\sqrt{D} \lesssim (R^2 \delta^{-2})^{1/(4\mu p + 2p +1)} \lesssim_{\mu,p,c_A} D$ (see \cref{CorEarlyStopWeakMinimax}).

In \cref{SecTransferReconstructionError}, we transfer the results for the prediction error to the reconstruction error, i.e.\ the error of the CG estimator. Similarly to the prediction error, the analysis of the reconstruction error relies on a nonasymptotic upper bound given in \cref{PropRoughBoundStrongError}, which depends on the error terms and the residual polynomial.

Under \eqref{AssPSD} with $p>1/2$ and \eqref{AssGN}, combining \cref{PropRoughBoundStrongError} with an interpolation inequality and $\E[S_{\baloracle,\lambda}] \lesssim_{\mu,p,C_A} \predrate$ yields
\begin{equation*}
\sup_{f: \, \sourcecondf} \E\big[\norm{\widehat{f}_{\baloracle}-f^\dagger}^2\big] \lesssim_{\mu,p,C_A} \recrate
\end{equation*}
with
\begin{equation}
\recrate \coloneqq R^{(4p+2)/(4\mu p +2p+1)} \delta^{8 \mu p / (4\mu p +2p+1)}. \label{EqDefRecRate}
\end{equation}
Under the same growth condition $D \gtrsim_{\mu,p,c_A} (R^2 \delta^{-2})^{1/(4\mu p +2p +1)}$ as before, this rate is minimax optimal such that $\widehat{f}_{\baloracle}$ simultaneously attains the optimal rate of convergence for the prediction as well as for the reconstruction error (see \cref{ThmStrongMinimax}).

For the reconstruction error at $\tau$, we need to perform a more intricate high probability analysis of the \emph{intrinsic regularisation parameter} $\abs{r'_{\tau}(0)}$ and the error terms at $\tau$ occurring in the upper bound on the reconstruction error. In \cref{ThmEarlyStopStrongMinimax}, under the same assumptions as for the prediction error, we show that the early stopping rule $\tau$ is also minimax adaptive for the reconstruction error over signals $f$ satisfying $\sourcecondf$ for all regularities $\mu>0$ with $\sqrt{D} \lesssim (R^2 \delta^{-2})^{1/(4\mu p + 2p +1)} \lesssim_{\mu,p,c_A} D$.
As a byproduct, our analysis yields also the classical convergence rates for CG under deterministic noise stopped by the discrepancy principle, see \cref{PropDiscrepancyPrincipleOrderOpt}, where some arguments may be more transparent than previous ones.

In \cref{SecNumericalIllustration}, we consider some numerical examples from the literature. For mildly ill-posed problems, the practical performance of the data-driven stopping rule $\tau$ is quite remarkable, usually very close to the best iterate along the CG path. Even for a severely ill-posed test problem, which is not covered by our theory, the early stopping criterion provides reasonable results.


\section{The interpolated conjugate gradient method} \label{SecInterpolConjGradientMethod}

We avoid discretisation errors by interpolating in \cref{DefInterpolEstimators} linearly between the residual polynomials from \cref{DefEstimators}. From a computational perspective, the interpolation does not impose additional computational cost. In practice, we calculate the estimators at integer iteration indices,
and as soon as $R_m^2 \le \kappa$ for some $m \in \{0,\dots,d\}$ with $\kappa$ from \cref{DefEarlyStopRule}, we stop the computation. The realised value of $\tau\in(m-1,m]$ can then be calculated from the last two residuals. Since CG iterates usually converge very quickly, stopping at an interpolated iteration helps to prevent overshooting.
For a discussion how the interpolation allows to retrieve an order-optimality result from the theory of deterministic inverse problems see \cref{RemOrderOptRegMethod}.

We analyse in \cref{SecClassicalConjugateGradientMethod} the classical noninterpolated CG method, which forms the basis for the following results. The proofs for this section are given in \cref{SecProofsResPol}.

\begin{definition} \label{DefInterpolEstimators}
For $t=k+\alpha$, $k=0,\dots,d-1$, $\alpha \in (0,1]$, the \emph{interpolated residual polynomial} is defined as
\begin{equation*}
r_t \coloneqq (1-\alpha)r_k + \alpha r_{k+1}.
\end{equation*}
The \emph{interpolated prediction} and \emph{CG estimators} are given by
\begin{equation*}
\widehat{g}_t \coloneqq (\id_D -r_t(AA^\top))Y \quad \text{and} \quad \widehat{f}_t \coloneqq A^{\dagger} \widehat{g}_t,
\end{equation*}
respectively, i.e.
\begin{equation*}
\widehat{g}_t = (1-\alpha) \widehat{g}_k + \alpha \widehat{g}_{k+1} \quad \text{and} \quad \widehat{f}_t = (1-\alpha) \widehat{f}_k + \alpha \widehat{f}_{k+1}.
\end{equation*}
\end{definition}

\begin{remark}
Although we will not use it, the interpolation follows also naturally by defining $r_t$ via penalised minimisation. Let
\begin{equation}
r_{k+1,\lambda}\coloneqq \argmin_{p_{k+1}\in \Pol_{k+1,1}}\Big(\norm{\mathsf{p_{k+1}}}_Y^2+\lambda a_{k+1}(p_{k+1})^2\Big), \label{EqDefrkpen}
\end{equation}
where $a_{k+1}(p)\in\R$ denotes the coefficient in front of $x^{k+1}$ for a polynomial $p(x)$ and $\lambda>0$ is a penalty parameter. At the end of \cref{SecProofsResPol}, we prove
\begin{equation}
r_{k+1,\lambda}= r_k+\alpha_\lambda(r_{k+1}-r_k) \quad \text{with} \quad \alpha_\lambda =\frac{R_k^2-R_{k+1}^2} {R_k^2-R_{k+1}^2+\lambda a_{k+1}(r_{k+1})^2}\in(0,1]. \label{Eqrkpen}
\end{equation}
Since $\alpha_\lambda\downarrow 0$ for $\lambda\uparrow\infty$ and $\alpha_\lambda=1$ for $\lambda=0$, we recover $r_t$ for $t=k+\alpha$, $k=0,\dots,d-1$, $\alpha\in(0,1)$, as $r_{k+1,\lambda}$ for a suitable penalisation parameter $\lambda=\lambda(t)>0$.
\end{remark}

We have extended \cref{DefEstimators} to $t \in [0,d]$, keeping the starting values $\widehat{g}_0 = \widehat{f}_0 = 0$, $r_0 \equiv 1$ and the relation $A \widehat{f}_t = \widehat{g}_t$ for $t \in [0,d]$. Note that $r_t$ for $t \in (k,k+1]$, $k=0,\dots,d-1$, is an element of $\Pol_{k+1,1}$.
\pagebreak

\begin{lemma}[Properties of the squared residual norm] \leavevmode \label{LemPropSquaredResNorm}
\begin{enumlemma}
\item For $t=k+\alpha$, $k=0,\dots,d-1$, $\alpha \in [0,1]$, the squared residual norm interpolates nonlinearly: \label{LemPropSquaredResNorm:NonlinInterpol}
\begin{equation*}
R_t^2 = \norm{r_t(AA^\top)Y}^2 = (1-\alpha)^2 R_k^2 + (1-(1-\alpha)^2)R_{k+1}^2.
\end{equation*}
\item The squared residual norm $R_t^2$ is continuous and decreasing in $t \in [0,d]$. \label{LemPropSquaredResNorm:CtsDecreasing}
\item For $t,s \in [0,d]$, we have \label{LemPropSquaredResNorm:LemResNormMaxts}
\begin{equation*}
R_{t \vee s}^2 \le \scapro{\mathsf{r_t}}{\mathsf{r_s}}_Y
\quad \text{and} \quad
\norm{A(\widehat{f}_t-\widehat{f}_s)}^2 \le \abs{R_t^2 - R_s^2}.
\end{equation*}
Both statements hold with equality for integer $t$ and $s$.
\end{enumlemma}
\end{lemma}

\begin{definition} \label{DefRespolSmallestZero}
Denote by $x_{1,t}$ the smallest zero on $[0,\infty)$ of the $t$-th residual polynomial $r_t$, $t \in (0,d]$. For $x\ge 0$, introduce the  functions
\begin{equation*}
r_{t,<}(x) \coloneqq r_t(x){\bf 1}(x<x_{1,t}), \quad
r_{t,>}(x) \coloneqq r_t(x){\bf 1}(x>x_{1,t})
\end{equation*}
and set $r_{0,<} \coloneqq r_0 \equiv 1$, $r_{0,>} \equiv 0$.
If $x_{1,t}$ is clear from the context, we also write $f_<(x) \coloneqq f(x){\bf 1}(x<x_{1,t})$, $\norm{\mathsf f}_{Y,<} \coloneqq \norm{\mathsf f_<}_Y$ and $\scapro{\mathsf{f}}{\mathsf{g}}_{Y,<}\coloneqq\scapro{\mathsf{f_<}}{\mathsf{g_<}}_Y$ for any functions $f$ and $g$, analogously for `$>$'. In particular, we set $\norm{v}_< \coloneqq \norm{1_<(AA^\top)v}$ and $\norm{w}_< \coloneqq \norm{1_<(A^\top A)w}$ for any vectors $v \in \R^D$ and $w \in \R^P$, where $1_<(x) \coloneqq {\bf 1}(x < x_{1,t})$, analogously for `$>$' and `$\ge$'.
\end{definition}

For integer iteration indices, the residual polynomials possess several well-known properties, which are collected in \cref{LemPropResidualPolynomials}. These carry over to the interpolated analogues. We state in the following lemma the most important properties, denoting by $\norm{\cdot}_{\mathrm{spec}}$ the spectral norm of a matrix.

\begin{lemma}[Properties of the residual polynomial]  \label{LemPropInterpolResidualPolynomials}
Let $t=k+\alpha$ with $k=0,\dots,d-1$ and $\alpha \in (0,1]$.
\begin{enumlemma}
\item For $k=1,\dots,d-1$, $r_k$ has $k$ real simple zeros $0<x_{1,k}< \dots < x_{k,k} < \norm{AA^\top}_{\mathrm{spec}}$. For $k=d$, the zeros are the $d$ distinct squared nonzero singular values $0 < x_{1,d}=\tilde\lambda_d^2 < \dots < x_{d,d}=\tilde\lambda_1^2 = \norm{AA^\top}_{\mathrm{spec}}$. \label{LemPropInterpolResidualPolynomials:simpleZeros:noninterpol}
\item For $\alpha \in (0,1)$, $r_t$ has $k+1$ real simple zeros $x_{i,t}$ satisfying $0 < x_{i,k+1} < x_{i,t} < x_{i,k}$, $i=1,\dots,k+1$, setting $x_{1,0}, x_{k+1,k} \coloneqq \infty$. \label{LemPropInterpolResidualPolynomials:simpleZeros}
\item $r_t$ can be written as \label{LemPropInterpolResidualPolynomials:Formula}
\begin{equation*}
r_t(x)=\prod_{i=1}^{\ceil{t}}\Big(1-\frac{x}{x_{i,t}}\Big), \ x \in [0,\norm{AA^\top}_{\mathrm{spec}}], \quad \text{with} \quad \abs{r_t'(0)}=\sum_{i=1}^{\ceil{t}} x_{i,t}^{-1}.
\end{equation*}
\item $r_t$ is nonnegative, decreasing and convex on $[0, x_{1,t}]$ and log-concave on $[0, x_{1,t})$. \label{LemPropInterpolResidualPolynomials:Properties}
\item For $t_j = k+\alpha_j$, $j=1,2$, with $k =0,\dots,d-1$ and $0 < \alpha_1 < \alpha_2 < 1$, we have \label{LemPropInterpolResidualPolynomials:interlacedZeros}
\begin{equation*}
x_{1,t_2} < x_{1,t_1} < x_{2,t_2} < \dots < x_{k,t_1} < x_{k+1,t_2} < x_{k+1, t_1}.
\end{equation*}
\item For fixed $x \in [0,\norm{AA^\top}_{\mathrm{spec}}]$, $t \mapsto r_{t,<}(x)$ is continuous and nonincreasing on $[0,d]$. \label{LemPropInterpolResidualPolynomials:rtNonIncreasingCts}
\item For $k=1,\dots,d$, $i=1,\dots,k$, we have \label{LemPropInterpolResidualPolynomials:ZerosEigenvalues:noninterpol}
\begin{equation*}
x_{k+1-i,k} \le \lambda_i^2.
\end{equation*}
\end{enumlemma}
\end{lemma}

The following key proposition without interpolation is originally due to \citet[(3.14)]{Nem1986}, see \cref{LemRkbound} for details. We extend it to real iteration indices. It allows the restriction to squared singular values in the interval $[0,x_{1,t}]$ for the error analysis.

\begin{proposition}\label{PropRtbound}
The residual polynomial satisfies $\norm{\mathsf{r_t}}_Y^2\le \norm{\mathsf{r_{t,<}^{1/2}}}_Y^2$ for all $t \in [0,d]$.
\end{proposition}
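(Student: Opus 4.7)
The plan is to split $t \in [0,d]$ into the integer and the interpolated cases.

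For $t = k \in \{0,1,\ldots,d\}$, the inequality is the classical bound of \citet[(3.14)]{Nem1986}, recorded in \cref{LemRkbound}; no further argument is needed.

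For $t = k + \alpha$ with $k \in \{0,\ldots,d-1\}$ and $\alpha \in (0,1)$, my plan is to reduce to the integer case by \emph{augmenting the spectrum}. The preceding remark identifies $r_t$ as the penalised minimiser $r_{k+1,\lambda}$ of $\norm{\mathsf{p}}_Y^2 + \lambda\,\alpha_{k+1}(p)^2$ over $\Pol_{k+1,1}$ for a suitable $\lambda = \lambda(\alpha) > 0$. For $M$ large, introduce the auxiliary inner product
\begin{equation*}
\scapro{\mathsf{p}}{\mathsf{q}}_{Y,M} \coloneqq \scapro{\mathsf{p}}{\mathsf{q}}_Y + p(M)q(M)\frac{\lambda}{M^{2(k+1)}},
\end{equation*}
which is of the same form as $\scapro{\cdot}{\cdot}_Y$ but with an extra spectral point at $M$ and weight $\lambda/M^{2(k+1)}$. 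Let $\tilde r_M \in \Pol_{k+1,1}$ minimise $\norm{\cdot}_{Y,M}^2$. Then $\tilde r_M$ is the $(k+1)$-th standard CG residual polynomial for the augmented problem (on $d+1$ distinct spectral points), so \cref{LemRkbound} applies and gives
\begin{equation*}
\norm{\mathsf{\tilde r_M}}_{Y,M}^2 \le \norm{\mathsf{\tilde r_{M,<}^{1/2}}}_{Y,M}^2,
\end{equation*}
where ``$<$'' refers to the smallest positive zero $\tilde x_{1,M}$ of $\tilde r_M$.

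The final step is to let $M \to \infty$. On $\Pol_{k+1}$, the Gram matrix of $\scapro{\cdot}{\cdot}_{Y,M}$ converges entrywise to the Gram matrix of the penalised inner product $\scapro{\cdot}{\cdot}_Y + \lambda\,\alpha_{k+1}(\cdot)\alpha_{k+1}(\cdot)$, so by continuity of the minimiser of a positive-definite quadratic form over an affine subspace, $\tilde r_M \to r_t$ coefficientwise. Since the zeros of $r_t$ are simple by \cref{LemPropInterpolResidualPolynomials:simpleZeros}, continuity of simple polynomial roots in the coefficients yields $\tilde x_{1,M} \to x_{1,t}$. For $M$ eventually larger than $x_{1,t}$, the extra spectral point does not contribute to the ``$<$''-sum on the right, while on the left the new contribution reduces to $\tilde r_M(M)^2 \lambda/M^{2(k+1)} \to \lambda\,\alpha_{k+1}(r_t)^2$ from the leading term of $\tilde r_M(M)$. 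Passing to the limit gives
\begin{equation*}
\norm{\mathsf{r_t}}_Y^2 + \lambda\,\alpha_{k+1}(r_t)^2 \le \norm{\mathsf{r_{t,<}^{1/2}}}_Y^2,
\end{equation*}
from which the proposition follows by dropping the nonnegative penalty term.

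The main technical obstacle is justifying the limit rigorously -- coefficientwise convergence of the minimiser and of its smallest zero -- both of which follow from standard perturbation theory together with the simplicity assertion in \cref{LemPropInterpolResidualPolynomials:simpleZeros}.
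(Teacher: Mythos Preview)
Your argument is correct and takes a genuinely different route from the paper's proof.

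The paper adapts Nemirovskii's original argument directly to the interpolated polynomial: it first derives an explicit orthogonality relation $r_t\perp_Y (r_{k+1}-1+\zeta)$ for a suitable scalar $\zeta$, then constructs an auxiliary polynomial $\tilde r_t(x)=\frac{\tilde x_{1,t}-x}{x_{1,t}-x}\,r_t(x)$ with a carefully chosen $\tilde x_{1,t}\in[0,x_{1,k+1})$ so that $\tilde r_t\perp_Y r_t$. This orthogonality replaces the relation $r_k\perp_Y \frac{x}{x-x_{1,k}}r_k$ used in the integer case and allows the same splitting argument to go through verbatim. The proof is entirely algebraic and self-contained, but the correct choice of $\tilde x_{1,t}$ and the verification that $\tilde r_t$ has the required form take some work.

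Your spectral-augmentation approach is conceptually cleaner: you embed the penalised minimisation characterising $r_t$ into a one-parameter family of genuine CG problems (with an extra spectral point at $M$), apply the already-established integer case \cref{LemRkbound} uniformly in $M$, and pass to the limit. The convergence of the Gram matrix on $\Pol_{k+1}$ is exactly as you describe, the limiting form is positive definite by Assumption~\eqref{AssY} (any $p\in\Pol_{k+1}$ with $\norm{\mathsf p}_Y=0$ and $\alpha_{k+1}(p)=0$ lies in $\Pol_k$ and hence vanishes), and simplicity of the zeros of $r_t$ from \cref{LemPropInterpolResidualPolynomials:simpleZeros} justifies the root convergence. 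As a bonus you obtain the slightly sharper inequality $\norm{\mathsf{r_t}}_Y^2+\lambda\,\alpha_{k+1}(r_t)^2\le\norm{\mathsf{r_{t,<}^{1/2}}}_Y^2$. The trade-off is that your proof is ``soft'' and relies on a limiting argument, whereas the paper's construction is explicit and gives the bound without any perturbation theory.
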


On the interval $[0,x_{1,t}]$, the residual polynomial can be controlled in terms of the value of its first derivative $r_t'(0)$ in zero. This quantity plays an essential role in the CG analysis, as discussed in \cref{RemDerivativeRespolZero:tradeoff}.

The first inequality in the lemma below is well-known, dating back to \citet[Lemma~4.3.15]{Lou1989}, and follows from the convexity of the residual polynomial $r_t$ on $[0,x_{1,t}]$. The second one is new and results from the log-concavity of $r_t$ on the same interval.

\begin{lemma} \label{LemrtBound}
For all $t \in [0,d]$, we have
\begin{equation*}
(1-\abs{r_t'(0)}x)_+\le
r_t(x)\le e^{-\abs{r_t'(0)}x},\quad x\in[0,x_{1,t}].
\end{equation*}
\end{lemma}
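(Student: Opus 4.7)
The plan is to read off both inequalities directly from the two qualitative properties of $r_t$ collected in Lemma~\ref{LemPropInterpolResidualPolynomials:Properties}, namely convexity on $[0,x_{1,t}]$ and log-concavity on $[0,x_{1,t})$, combined with the normalisation $r_t(0)=1$ (since $r_t\in\Pol_{\ceil t,1}$) and the sign relation $r_t'(0)=-\abs{r_t'(0)}\le 0$, which is immediate from the product representation in Lemma~\ref{LemPropInterpolResidualPolynomials:Formula}. The case $t=0$ with $r_0\equiv 1$ and $\abs{r_0'(0)}=0$ is trivial, so from now on I assume $t>0$.

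For the lower bound, convexity implies that the graph of $r_t$ lies above its tangent at $0$: on $[0,x_{1,t}]$ one has
\begin{equation*}
r_t(x)\ge r_t(0)+r_t'(0)x=1-\abs{r_t'(0)}x.
\end{equation*}
Combining this with the nonnegativity of $r_t$ on the same interval (again Lemma~\ref{LemPropInterpolResidualPolynomials:Properties}) yields $r_t(x)\ge(1-\abs{r_t'(0)}x)_+$.

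For the upper bound, I would use that $r_t>0$ on $[0,x_{1,t})$, so $\log r_t$ is well-defined and, by assumption, concave there. A concave function lies below each of its supporting lines; at $0$ the slope is $(\log r_t)'(0)=r_t'(0)/r_t(0)=-\abs{r_t'(0)}$ and the value is $\log r_t(0)=0$, so $\log r_t(x)\le-\abs{r_t'(0)}x$, i.e.\ $r_t(x)\le e^{-\abs{r_t'(0)}x}$, for $x\in[0,x_{1,t})$. At the endpoint $x=x_{1,t}$ the inequality degenerates to $0\le e^{-\abs{r_t'(0)}x_{1,t}}$ and therefore holds automatically, closing the interval.

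No genuine obstacle arises once Lemma~\ref{LemPropInterpolResidualPolynomials} is available; the only delicate point is the right endpoint $x=x_{1,t}$ for the upper bound, where $\log r_t=-\infty$ prevents a direct appeal to the tangent inequality for $\log r_t$, but this is resolved by the trivial limit $r_t(x_{1,t})=0$.
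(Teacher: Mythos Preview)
Your proof is correct and follows essentially the same approach as the paper: the lower bound via convexity and nonnegativity of $r_t$ on $[0,x_{1,t}]$, the upper bound via log-concavity of $r_t$ on $[0,x_{1,t})$ together with $r_t(0)=1$ and $r_t'(0)=-\abs{r_t'(0)}$. Your explicit treatment of the endpoint $x=x_{1,t}$ for the upper bound is in fact slightly more careful than the paper's version, which simply extends the conclusion to the closed interval without comment.
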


Using the upper bound, we get directly the following result. It will allow us to control the approximation error $A_{t,\lambda}$ from \cref{DefErrorTerms}.

\begin{corollary} \label{CorUpperBoundApproxErrorSourceCond}
Under the source condition $\sourcecondg$, for all $t \in (0,d]$, we have
\begin{equation*}
\norm{\mathsf{r_{t,<}^{1/2}}g}^2 \le R^2(\mu+1/2)^{2\mu +1} \abs{r_t'(0)}^{-2\mu -1}.
\end{equation*}
\end{corollary}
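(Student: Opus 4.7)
The plan is to diagonalise in the SVD basis, use the exponential bound from \cref{LemrtBound}, and reduce to a one-variable maximisation that is then controlled by the source condition.

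First I would pass to the sequence-space model. Since $AA^\top$ has eigenpairs $(\lambda_i^2, u_i)$ and $g = \sum_{i=1}^D g_i u_i$, the function $r_{t,<}^{1/2}(AA^\top)$ acts diagonally and
\begin{equation*}
\norm{\mathsf{r_{t,<}^{1/2}}g}^2 = \sum_{i=1}^D r_t(\lambda_i^2) \mathbf{1}(\lambda_i^2 < x_{1,t}) g_i^2.
\end{equation*}
By \cref{LemrtBound}, on $[0,x_{1,t}]$ we have $r_t(x)\le e^{-\abs{r_t'(0)}x}$, and since the resulting bound is nonnegative we may drop the indicator and extend the sum to all $i$:
\begin{equation*}
\norm{\mathsf{r_{t,<}^{1/2}}g}^2 \le \sum_{i=1}^D e^{-\abs{r_t'(0)} \lambda_i^2} g_i^2 = \sum_{i=1}^D \bigl( \lambda_i^{4\mu+2} e^{-\abs{r_t'(0)} \lambda_i^2}\bigr)\, \lambda_i^{-4\mu-2} g_i^2.
\end{equation*}

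Next I would factor out the prefactor by its supremum and use the source condition $\sum_i \lambda_i^{-4\mu-2} g_i^2 = \norm{g}_{\mu+1/2}^2 \le R^2$ to obtain
\begin{equation*}
\norm{\mathsf{r_{t,<}^{1/2}}g}^2 \le R^2 \sup_{x\ge 0} x^{2\mu+1} e^{-\abs{r_t'(0)} x}.
\end{equation*}
Setting $\beta \coloneqq 2\mu+1$ and $a\coloneqq \abs{r_t'(0)}$, an elementary calculus computation gives the maximiser $x_* = \beta/a$ and $\sup_{x\ge 0} x^\beta e^{-ax} = (\beta/(ea))^\beta$. Since $e>2$, this is bounded by $(\beta/(2a))^\beta = (\mu+1/2)^{2\mu+1} \abs{r_t'(0)}^{-(2\mu+1)}$, yielding the claim.

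All three steps are essentially mechanical once the diagonalisation is written down; the only point that requires a little care is the slack in the final constant, where replacing the sharp factor $e^{-\beta}$ by $2^{-\beta}$ is what produces the clean form $(\mu+1/2)^{2\mu+1}$ in the statement. I do not foresee any genuine obstacle here.
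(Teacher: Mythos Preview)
Your proof is correct and follows essentially the same route as the paper: diagonalise in the SVD basis, apply the exponential upper bound from \cref{LemrtBound}, factor out $\lambda_i^{4\mu+2}e^{-\abs{r_t'(0)}\lambda_i^2}$ and bound it by its supremum $((2\mu+1)/(e\abs{r_t'(0)}))^{2\mu+1}$, then invoke the source condition. The only cosmetic difference is that the paper writes the first step as $\norm{\mathsf{r_{t,<}^{1/2}}g}^2 \le \norm{\mathsf{\exp(-\abs{r_t'(0)}x/2)}g}^2$ before expanding, whereas you expand first and then bound term by term.
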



\section{Analysis of the prediction error} \label{SecAnalysisPredError}

\subsection{Decomposition of the prediction error}

Due to the nonlinearity of the CG estimator, we cannot completely disentangle data and approximation errors \cite[Section~7.3]{EngHanNeu1996}. We are, however, able to identify terms that share important properties of these errors, in particular regarding monotonicity along the iteration path.

\begin{definition} \label{DefErrorTerms}
For $t \in [0,d]$, the \emph{stochastic error term} is defined as
\begin{equation*}
S_{t,\lambda} \coloneqq \norm{\mathsf{(1-r_{t,<})^{1/2}}\xi}^2
\end{equation*}
and the \emph{approximation error term} as
\begin{equation*}
 A_{t,\lambda} \coloneqq \norm{\mathsf{r_{t,<}^{1/2}}g}^2+R_t^2-\norm{\mathsf{r_{t,<}^{1/2}}Y}^2.
\end{equation*}
\end{definition}

\begin{proposition}\label{PropWeakLoss}
For $t \in [0,d]$, we have
\begin{equation*}
\norm{A(\widehat f_t-f)}^2=A_{t,\lambda}+S_{t,\lambda}-2\scapro{\xi}{\mathsf{r_{t,>}}Y}\le 2(A_{t,\lambda}+S_{t,\lambda}).
\end{equation*}
\end{proposition}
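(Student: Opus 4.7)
The plan is to establish the identity by a direct spectral computation based on the decomposition $r_t = r_{t,<} + r_{t,>}$, and then to obtain the stated inequality by recognising the defect $2(A_{t,\lambda}+S_{t,\lambda}) - \norm{A(\widehat f_t-f)}^2$ as a nonnegative square.

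Starting from $A\widehat f_t = (\id_D - r_t(AA^\top))Y$ and $Af = Y - \xi$, I first write
\[
\norm{A(\widehat f_t - f)}^2 = \norm{\xi - r_t(AA^\top)Y}^2 = \norm{\xi}^2 + R_t^2 - 2\scapro{\xi}{r_t(AA^\top)Y}.
\]
Since $r_t(x_{1,t})=0$, the identity $r_t = r_{t,<} + r_{t,>}$ holds on the spectrum of $AA^\top$, and the two pieces act on mutually orthogonal eigenspaces; in particular the scalar product splits additively as $\scapro{\xi}{\mathsf{r_{t,<}}Y}+\scapro{\xi}{\mathsf{r_{t,>}}Y}$. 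In a parallel computation, I expand $A_{t,\lambda}+S_{t,\lambda}$ in the SVD basis using $Y_i = g_i + \xi_i$: the per-index combination
\[
r_{t,<}(\lambda_i^2) g_i^2 - r_{t,<}(\lambda_i^2) Y_i^2 + (1-r_{t,<}(\lambda_i^2))\xi_i^2 = \xi_i^2 - 2 r_{t,<}(\lambda_i^2)\xi_i Y_i,
\]
together with the $R_t^2$ term already present in $A_{t,\lambda}$, gives
\[
A_{t,\lambda} + S_{t,\lambda} = \norm{\xi}^2 + R_t^2 - 2\scapro{\xi}{\mathsf{r_{t,<}}Y}.
\]
Subtracting this from the first display produces the asserted identity, the discrepancy being exactly $-2\scapro{\xi}{\mathsf{r_{t,>}}Y}$.

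For the inequality I rewrite
\[
2(A_{t,\lambda}+S_{t,\lambda}) - \norm{A(\widehat f_t-f)}^2 = A_{t,\lambda} + S_{t,\lambda} + 2\scapro{\xi}{\mathsf{r_{t,>}}Y} = \norm{\xi}^2 + R_t^2 - 2\scapro{\xi}{(\mathsf{r_{t,<}} - \mathsf{r_{t,>}})Y}.
\]
The disjoint-support property yields $R_t^2 = \norm{\mathsf{r_{t,<}}Y}^2 + \norm{\mathsf{r_{t,>}}Y}^2 = \norm{(r_{t,<}-r_{t,>})(AA^\top)Y}^2$, so the right-hand side equals $\norm{\xi - (r_{t,<}-r_{t,>})(AA^\top)Y}^2 \ge 0$. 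The main conceptual obstacle is spotting this sign-flip (the $r_{t,>}$-part enters with opposite sign); once it is seen, the bound is automatic. Eigenvalues with $\lambda_i^2 = x_{1,t}$ are harmless, since $r_t$ vanishes there and the corresponding $\xi_i^2$ is already absorbed by the $(1-r_{t,<})$ factor in $S_{t,\lambda}$.
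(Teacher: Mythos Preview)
Your proof is correct. The identity part is essentially the same as the paper's: both establish $A_{t,\lambda}+S_{t,\lambda}=\norm{\xi}^2+R_t^2-2\scapro{\xi}{\mathsf{r_{t,<}}Y}$ and compare it with the direct expansion $\norm{A(\widehat f_t-f)}^2=\norm{\xi}^2+R_t^2-2\scapro{\xi}{\mathsf{r_t}Y}$; you do the bookkeeping index-by-index while the paper uses the polarisation $-2\scapro{\mathsf{r_{t,<}^{1/2}}\xi}{\mathsf{r_{t,<}^{1/2}}Y}=\norm{\mathsf{r_{t,<}^{1/2}}g}^2-\norm{\mathsf{r_{t,<}^{1/2}}Y}^2-\norm{\mathsf{r_{t,<}^{1/2}}\xi}^2$, but this is only cosmetic.

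For the inequality, however, your argument is genuinely different and cleaner. The paper splits $\norm{A(\widehat f_t-f)}^2=\norm{\cdot}_<^2+\norm{\cdot}_>^2$, computes the $<$-part exactly, bounds the $>$-part via $\norm{(\mathsf{1-r_t})Y-g}_>^2\le 2(\norm{\xi}_>^2+\norm{\mathsf{r_t}Y}_>^2)$, and then reassembles everything using $\tfrac12(a+b)\le a+\tfrac12 b$. You instead observe that the defect $2(A_{t,\lambda}+S_{t,\lambda})-\norm{A(\widehat f_t-f)}^2$ equals $\norm{\xi}^2+R_t^2-2\scapro{\xi}{(\mathsf{r_{t,<}}-\mathsf{r_{t,>}})Y}$, and since the sign flip on the $>$-part leaves $\norm{(\mathsf{r_{t,<}}-\mathsf{r_{t,>}})Y}^2=R_t^2$ unchanged (disjoint supports), this is the perfect square $\norm{\xi-(\mathsf{r_{t,<}}-\mathsf{r_{t,>}})Y}^2\ge 0$. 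This avoids the separate $<$/$>$ estimates and the triangle-inequality loss, and makes the factor $2$ appear exactly rather than through $(a+b)^2\le 2a^2+2b^2$.
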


\begin{proof}
For $t=0$, the claim is clear, since $r_{0,<} \equiv 1$, $r_{0,>} \equiv 0$ and $\widehat{f}_0 = 0$. Let $t>0$. We deduce by direct calculation
\begin{align*}
\norm{A(\widehat f_t-f)}^2
&=\norm{(\mathsf{1-r_t})Y-g}^2\\
&=\norm{\mathsf{r_t}Y}^2+\norm{\xi}^2-2\scapro{\xi}{\mathsf{r_t}Y}\\
&=R_t^2+\norm{\xi}^2-2 \big( \scapro{\xi}{\mathsf{r_{t,>}}Y} + \scapro{\xi}{\mathsf{r_{t,<}}Y} \big) \\
&= R_t^2+\norm{\xi}^2-2 \scapro{\xi}{\mathsf{r_{t,>}}Y} + \norm{\mathsf{r_{t,<}^{1/2}}(Y-\xi)}^2 - \norm{\mathsf{r_{t,<}^{1/2}}Y}^2 - \norm{\mathsf{r_{t,<}^{1/2}}\xi}^2 \\
&= \norm{\mathsf{r_{t,<}^{1/2}}g}^2+R_t^2- \norm{\mathsf{r_{t,<}^{1/2}}Y}^2+\norm{\mathsf{(1-r_{t,<})^{1/2}}\xi}^2-2\scapro{\xi}{\mathsf{r_{t,>}}Y} \\
&= A_{t,\lambda}+S_{t,\lambda}-2\scapro{\xi}{\mathsf{r_{t,>}}Y}.
\end{align*}
When restricting to singular values $\lambda_i^2 < x_{1,t}$ the same calculation gives
\begin{equation*} \norm{A(\widehat f_t-f)}_<^2=\norm{\mathsf{r_{t,<}^{1/2}}g}^2+\norm{\mathsf{r_{t}}Y}_<^2- \norm{\mathsf{r_{t,<}^{1/2}}Y}^2+\norm{\mathsf{(1-r_{t})_<^{1/2}}\xi}^2.
\end{equation*}
On the other hand, we have $\norm{A(\widehat f_t-f)}_{\ge}^2= \norm{(\mathsf{1-r_t})Y-g}_{\ge}^2\le 2(\norm{\xi}_{\ge}^2+\norm{\mathsf{r_t}Y}_>^2)$ and thus
\begin{align*}
\tfrac12\norm{A(\widehat f_t-f)}^2 &\le \norm{A(\widehat f_t-f)}_<^2+\tfrac12\norm{A(\widehat f_t-f)}_{\ge}^2\\
 &\le \norm{\mathsf{r_{t,<}^{1/2}}g}^2+\norm{\mathsf{r_{t}}Y}_<^2- \norm{\mathsf{r_{t,<}^{1/2}}Y}^2+\norm{\mathsf{(1-r_{t})_<^{1/2}}\xi}^2+\norm{\xi}_{\ge}^2+\norm{\mathsf{r_t}Y}_>^2\\
&= \norm{\mathsf{r_{t,<}^{1/2}}g}^2+R_t^2- \norm{\mathsf{r_{t,<}^{1/2}}Y}^2+\norm{\mathsf{(1-r_{t,<})^{1/2}}\xi}^2,
\end{align*}
which equals $A_{t,\lambda}+S_{t,\lambda}$ and gives the upper bound.
\end{proof}

\begin{lemma} \label{LemPropErrorTerms}
The stochastic error term $S_{t,\lambda}$, $t \in [0,d]$, satisfies
\begin{enumlemma}
\item $S_{t,\lambda}=0$ for $\xi=0$, \label{LemPropErrorTerms:StochErrorXiZero}
\item $S_{0,\lambda}=0$ and $S_{d,\lambda}=\norm{\xi}^2$, \label{LemPropErrorTerms:StochErrorZeroD}
\item $S_{t,\lambda} \le \norm{((\mathsf{\abs{r_t'(0)}x)^{1/2}\wedge 1})\xi}^2$, \label{LemPropErrorTerms:BoundStochError}
\item $S_{t,\lambda}$ is nondecreasing in $t$, \label{LemPropErrorTerms:StochErrorNonDec}
\item $S_{t,\lambda}$ is continuous in $t$. \label{LemPropErrorTerms:StochErrorCts}
\end{enumlemma}
The approximation error term $A_{t,\lambda}$, $t \in [0,d]$, satisfies
\begin{enumlemmaresume}
\item $A_{t,\lambda}\le 0$ if $Af=0$, \label{LemPropErrorTerms:BoundApproxErrorZeroSignal}
\item $A_{0,\lambda}=\norm{Af}^2$ and $A_{d,\lambda}=0$, \label{LemPropErrorTerms:ApproxErrorZeroD}
\item $A_{t,\lambda}\le \norm{\mathsf{r_{t,<}^{1/2}}g}^2 \le \norm{\mathsf{\exp(-\abs{r_t'(0)}x/2)}g}^2$, \label{LemPropErrorTerms:BoundApproxError}
\item the upper bound $\norm{\mathsf{r_{t,<}^{1/2}}g}^2$ is nonincreasing in $t$, \label{LemPropErrorTerms:ApproxErrorBoundNonInc}
\item $A_{t,\lambda}$ is continuous in $t$. \label{LemPropErrorTerms:ApproxErrorCts}
\end{enumlemmaresume}
\end{lemma}

\begin{proof}
Property~\subcref{LemPropErrorTerms:StochErrorXiZero} follows from the definition of $S_{t,\lambda}$, while \subcref{LemPropErrorTerms:StochErrorZeroD} and \subcref{LemPropErrorTerms:ApproxErrorZeroD} are due to $r_{0,<} \equiv 1$ and $r_d(\lambda_i^2)=0$, $i=1,\dots,D$, by \cref{LemPropInterpolResidualPolynomials:simpleZeros:noninterpol}.
Since $t \mapsto r_{t,<}(x)$ is nonincreasing for fixed $x \in [0,\norm{AA^\top}_{\mathrm{spec}}]$ by \cref{LemPropInterpolResidualPolynomials:rtNonIncreasingCts}, we obtain \subcref{LemPropErrorTerms:StochErrorNonDec} and \subcref{LemPropErrorTerms:ApproxErrorBoundNonInc}. Using $1-r_t(x)\le (\abs{r_t'(0)}x)\wedge 1$ on $[0,x_{1,t}]$ by \cref{LemrtBound} and $r_t\ge 0$ on that interval, we have \subcref{LemPropErrorTerms:BoundStochError}.
Properties~\subcref{LemPropErrorTerms:BoundApproxErrorZeroSignal} and \subcref{LemPropErrorTerms:BoundApproxError} follow from $R_t^2 \le \norm{\mathsf{r_{t,<}^{1/2}}Y}^2$ by \cref{PropRtbound} and \cref{LemrtBound}. Note that $R_t^2$ is continuous in $t$ by \cref{LemPropSquaredResNorm:CtsDecreasing}. Together with \cref{LemPropInterpolResidualPolynomials:rtNonIncreasingCts}, this yields the remaining properties \subcref{LemPropErrorTerms:StochErrorCts} and \subcref{LemPropErrorTerms:ApproxErrorCts}.
\end{proof}

\begin{remark} \label{RemSumYnonZero}
For the proof of $S_{d,\lambda}=\norm{\xi}^2$ and $A_{d,\lambda}=0$, it was crucial that $r_d$ maps all squared nonzero singular values of $A$ to zero. This is guaranteed by Assumption~\eqref{AssY}. Assuming only $Y \neq 0$ and defining the maximal iteration index as in \citet[Section~5.2]{BlaKra2016} would not suffice.
\end{remark}

For later purposes we also need the following relatively rough bound, whose proof can be found in \cref{SecProofsAnalysisPredError}.

\begin{lemma}\label{Lemrkg}
We have $\norm{\mathsf{r_{t,<}}g}^2\le 6S_{t,\lambda}+2A_{t,\lambda}$ for all $t\in [0,d]$.
\end{lemma}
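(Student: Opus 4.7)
The plan is to diagonalise in the SVD basis and verify the inequality pointwise per index $i$. Abbreviating $r_t = r_t(\lambda_i^2)$ and using $Y_i = g_i + \xi_i$, I would spectrally expand the four quantities $\norm{\mathsf{r_{t,<}^{1/2}}g}^2$, $R_t^2$, $\norm{\mathsf{r_{t,<}^{1/2}}Y}^2$ and $S_{t,\lambda}$ entering the definitions. After grouping by $i=1,\dots,D$, the difference $6S_{t,\lambda}+2A_{t,\lambda}-\norm{\mathsf{r_{t,<}}g}^2$ reduces to a sum of quadratic forms in $(g_i,\xi_i)$ whose coefficients depend only on $r_t$ and on whether $\lambda_i^2 < x_{1,t}$ or $\lambda_i^2 \ge x_{1,t}$.

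The core of the argument is then a completion of the square per index. On the ``good'' domain $\lambda_i^2 < x_{1,t}$, where \cref{LemPropInterpolResidualPolynomials:Properties} gives $r_t \in [0,1]$, I expect the $i$th contribution to simplify to
\begin{equation*}
\bigl(r_t g_i - 2(1-r_t)\xi_i\bigr)^2 + 2(1-r_t^2)\xi_i^2,
\end{equation*}
which is non-negative since $1-r_t^2 \ge 0$. On the ``bad'' domain $\lambda_i^2 \ge x_{1,t}$ the contribution should collapse even more cleanly to $2 r_t^2 Y_i^2 + 6\xi_i^2 \ge 0$. Summing over $i$ then yields $6S_{t,\lambda}+2A_{t,\lambda}-\norm{\mathsf{r_{t,<}}g}^2 \ge 0$, which is the claim.

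The main obstacle is bookkeeping rather than insight: the coefficients of $g_i^2$, $g_i\xi_i$ and $\xi_i^2$ must be tracked carefully through the spectral expansion, and one must then guess the right completion. The constants $6$ and $2$ in the statement are essentially forced---the cross-term $-4r_t(1-r_t)$ on the good domain dictates the square $\bigl(r_t g_i - 2(1-r_t)\xi_i\bigr)^2$, and the residual $\xi_i^2$-coefficient $2(1-r_t^2)$ is non-negative precisely because $r_t \in [0,1]$ there, which is the property that singles out the ``good'' domain.
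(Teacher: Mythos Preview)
Your proof is correct and takes a genuinely different route from the paper's. You diagonalise and verify the inequality index by index via an explicit completion of the square, using only that $r_t(\lambda_i^2)\in[0,1]$ on the domain $\lambda_i^2<x_{1,t}$. The paper instead works globally with norms: it rewrites $A_{t,\lambda}$ using the identity $\norm{\mathsf{r_{t,<}}v}^2-\norm{\mathsf{r_{t,<}^{1/2}}v}^2=-\norm{\mathsf{(r_{t,<}(1-r_{t,<}))^{1/2}}v}^2$, applies Cauchy--Schwarz to the cross term to reach $\norm{\mathsf{r_{t,<}}g}^2\le A_{t,\lambda}+S_{t,\lambda}+2S_{t,\lambda}^{1/2}\norm{\mathsf{r_{t,<}}g}$, and finishes with the elementary implication $x^2\le Ax+B\Rightarrow x^2\le A^2+2B$, which with $A=2S_{t,\lambda}^{1/2}$ and $B=A_{t,\lambda}+S_{t,\lambda}$ produces exactly $6S_{t,\lambda}+2A_{t,\lambda}$.

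Your argument is more elementary and makes the constants transparent: the cross coefficient $-4r_t(1-r_t)$ forces the square, and the leftover $2(1-r_t^2)\xi_i^2$ is non-negative precisely on $[0,1]$. The paper's argument is coordinate-free and the quadratic trick is a reusable tool; it also explains the constants structurally, with the $6=4+2$ arising from $A^2+2B$. Either proof is complete.
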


\subsection{An oracle minimax bound for the prediction error} \label{SecOracleMinimaxBoundPredError}

In this section, we analyse the prediction error of the balanced oracle $\baloracle$ from \cref{DefBalancedOracle}.

\begin{lemma} The balanced oracle satisfies without further assumptions
\begin{equation*}
\norm{A(\widehat f_{\baloracle}-f)}^2\le \tfrac{4e}{e-1}\inf_{\rho>0} \big( \norm{\mathsf{\exp(-\rho x/2)}g}^2 \vee \norm{\mathsf{(1-\exp(-\rho x))^{1/2}}\xi}^2\big).
\end{equation*}
\end{lemma}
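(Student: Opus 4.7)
The plan is to bound the error at $\baloracle$ via \cref{PropWeakLoss}, reduce to an oracle-type infimum by a monotone-crossing argument with the deterministic upper bound on $A_{t,\lambda}$, and finally translate from the iteration index $t$ to the Showalter parameter $\rho=\abs{r_t'(0)}$ using \cref{LemrtBound}. The constant $e/(e-1)$ will emerge from a sharp scalar inequality.

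At the balanced oracle, $A_{\baloracle,\lambda}=S_{\baloracle,\lambda}$ by continuity of the interpolations and \cref{DefBalancedOracle}, so \cref{PropWeakLoss} gives $\norm{A(\widehat f_{\baloracle}-f)}^2\le 2(A_{\baloracle,\lambda}+S_{\baloracle,\lambda})=4S_{\baloracle,\lambda}$. To replace $S_{\baloracle,\lambda}$ by an infimum, I would use the deterministic upper bound $\bar A_t\coloneqq\norm{\mathsf{r_{t,<}^{1/2}}g}^2\ge A_{t,\lambda}$ from \cref{LemPropErrorTerms:BoundApproxError}, which is nonincreasing by \cref{LemPropErrorTerms:ApproxErrorBoundNonInc}, and let $\bar\tau\coloneqq\inf\{t\in[0,d]:\bar A_t\le S_{t,\lambda}\}$. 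Since $\{\bar A_t\le S_{t,\lambda}\}\subseteq\{A_{t,\lambda}\le S_{t,\lambda}\}$, one has $\bar\tau\ge\baloracle$, so $S_{\baloracle,\lambda}\le S_{\bar\tau,\lambda}$ by monotonicity. The monotone-crossing property ($\bar A$ nonincreasing, $S$ nondecreasing, equal at $\bar\tau$) gives $\inf_{t\in[0,d]}\max(\bar A_t,S_{t,\lambda})=\bar A_{\bar\tau}=S_{\bar\tau,\lambda}$, hence $S_{\baloracle,\lambda}\le\inf_t\max(\bar A_t,S_{t,\lambda})$.

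Write $\rho_t\coloneqq\abs{r_t'(0)}$. Combining \cref{LemPropErrorTerms:BoundApproxError} with \cref{LemrtBound} yields $\bar A_t\le B(\rho_t)\coloneqq\norm{\mathsf{\exp(-\rho_t x/2)}g}^2$. Next, \cref{LemPropErrorTerms:BoundStochError} gives $S_{t,\lambda}\le\norm{((\mathsf{\rho_t x})^{1/2}\wedge 1)\xi}^2$, and the sharp scalar inequality
\[
y\wedge 1\le\tfrac{e}{e-1}(1-e^{-y}),\qquad y\ge 0,
\]
(the ratio $(y\wedge 1)/(1-e^{-y})$ is increasing on $(0,1]$ and decreasing on $[1,\infty)$, with maximum $e/(e-1)$ attained at $y=1$) upgrades this to $S_{t,\lambda}\le\tfrac{e}{e-1}V(\rho_t)$ with $V(\rho)\coloneqq\norm{\mathsf{(1-\exp(-\rho x))^{1/2}}\xi}^2$.

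Finally, to match an arbitrary $\rho>0$, I would use that $t\mapsto\rho_t$ is continuous (piecewise linear by \cref{DefInterpolEstimators}) and nondecreasing on $[0,d]$, running from $0$ to $\rho_d=\sum_{i=1}^d\tilde\lambda_i^{-2}$; monotonicity at integer indices follows from \cref{LemPropInterpolResidualPolynomials:Formula} together with the zero interlacing $x_{i,k+1}<x_{i,k}$ from \cref{LemPropInterpolResidualPolynomials:simpleZeros}. For $\rho\in(0,\rho_d]$ pick $t$ with $\rho_t=\rho$. For $\rho>\rho_d$ take $t=d$: then $\bar A_d=0\le B(\rho)$, and since $1/\rho_d\le\tilde\lambda_d^2\le\lambda_i^2$ for every nonzero $\lambda_i$, the scalar inequality at $y=\rho_d\lambda_i^2\ge 1$ yields $\norm{\xi}^2\le\tfrac{e}{e-1}V(\rho_d)\le\tfrac{e}{e-1}V(\rho)$ by monotonicity of $V$. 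In either case $\max(\bar A_t,S_{t,\lambda})\le\tfrac{e}{e-1}\max(B(\rho),V(\rho))$; taking the infimum over $\rho$ and chaining with the second paragraph gives $\norm{A(\widehat f_{\baloracle}-f)}^2\le\tfrac{4e}{e-1}\inf_{\rho>0}\max(B(\rho),V(\rho))$. The delicate point is the sharp elementary inequality and the residual case $\rho>\rho_d$: it works because the CG final iterate saturates the noise, $V(\infty)=\norm{\xi}^2=S_{d,\lambda}$.
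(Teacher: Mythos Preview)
Your proof is correct, but it takes a more elaborate route than the paper's. The paper avoids introducing the auxiliary time $\bar\tau$ and the surjectivity argument for $t\mapsto\rho_t$ altogether: starting from $\norm{A(\widehat f_{\baloracle}-f)}^2\le 4(A_{\baloracle,\lambda}\wedge S_{\baloracle,\lambda})$, it bounds each term at the single point $\rho^*:=\abs{r_{\baloracle}'(0)}$ by $B(\rho^*)$ and $U_S(\rho^*):=\norm{((\mathsf{\rho^* x})^{1/2}\wedge 1)\xi}^2$ using \cref{LemPropErrorTerms:BoundStochError,LemPropErrorTerms:BoundApproxError}, and then exploits that $B$ is decreasing and $U_S$ is increasing in $\rho$ to obtain directly $B(\rho^*)\wedge U_S(\rho^*)\le\inf_{\rho>0}(B(\rho)\vee U_S(\rho))$; only afterwards is the scalar inequality $y\wedge 1\le\tfrac{e}{e-1}(1-e^{-y})$ applied to convert $U_S$ into $V$. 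This ``min at a point $\le$ inf of max'' trick for a decreasing/increasing pair sidesteps the need to match each $\rho$ by some $t$, so neither the continuity and monotonicity of $t\mapsto\abs{r_t'(0)}$ nor the separate treatment of $\rho>\rho_d$ is required. Your detour through $\bar\tau$ does buy something in principle---you only use $S_{\baloracle,\lambda}\le S_{\bar\tau,\lambda}$ rather than the pointwise bounds at $\baloracle$---but here that generality is not needed, and the paper's argument is considerably shorter.
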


\begin{proof}
In the case $Af=0$, we have $\baloracle = 0$ such that $\norm{A(\widehat f_{\baloracle}-f)}^2 = 0$ and the claim is clear. Suppose $Af \neq 0$. Then $\baloracle > 0$ and also $\abs{r_{\baloracle}'(0)} > 0$. \cref{LemPropErrorTerms:BoundStochError} and \subcref{LemPropErrorTerms:BoundApproxError} allow us to bound the error terms.
Since the upper bound for $A_{t,\lambda}$ is decreasing in $\abs{r_t'(0)}$ and the upper bound for $S_{t,\lambda}$ is increasing in $\abs{r_t'(0)}$ and $A_{\baloracle,\lambda}=S_{\baloracle,\lambda}$, we deduce by \cref{PropWeakLoss}
\begin{align}
\norm{A(\widehat f_{\baloracle}-f)}^2
&\le 2(A_{\baloracle,\lambda}+S_{\baloracle,\lambda}) \notag\\
&= 4\big(A_{\baloracle,\lambda} \wedge S_{\baloracle,\lambda}\big) \notag\\
&\le 4\big( \norm{\mathsf{\exp(-\abs{r_{\baloracle}'(0)}x/2)}g}^2 \wedge \norm{(\mathsf{(\abs{r_{\baloracle}'(0)}x)^{1/2}\wedge 1})\xi}^2\big) \label{EqForRemWeakErrorDN}\\
&\le 4\inf_{\rho>0}\big( \norm{\mathsf{\exp(-\rho x/ 2)}g}^2 \vee \norm{(\mathsf{(\rho x)^{1/2}\wedge 1})\xi}^2\big). \notag
\end{align}
It remains to apply $y\wedge 1\le \frac e{e-1}(1-e^{-y})$ for $y=\rho x\ge 0$.
\end{proof}

\begin{remark} \label{RemWeakErrorDN}
Equation~\eqref{EqForRemWeakErrorDN} yields
\begin{equation*}
\norm{A(\widehat f_{\baloracle}-f)}^2 \le 4 \norm{ (\mathsf{(\abs{r_{\baloracle}'(0)}x)^{1/2} \wedge \mathsf 1})\xi }^2 \le 4\norm{\xi}^2.
\end{equation*}
In particular, stopping at the balanced oracle does not incur a significantly larger error than iterating until the terminal iteration index $d$, where the data are perfectly reproduced.
Under \eqref{AssDN}, the deterministic upper bound $4\delta^2$ follows.
\end{remark}

\begin{theorem} \label{ThmWeakMinimax}
Suppose $f$ satisfies the source condition $\sourcecondf$. For all $t \in (0,d]$, we have
\begin{equation}
\norm{A(\widehat f_t-f)}^2 \le 2R^2(\mu+1/2)^{2\mu+1} \abs{r_t'(0)}^{-2\mu-1}+2\norm{(\mathsf{(\abs{r_t'(0)}x)^{1/2}\wedge 1})\xi}^2. \label{EqThmWeakMinimaxBoundPredError}
\end{equation}
Under \eqrefAssPSDupper{} with $p>1/2$ and \eqref{AssGN}, the balanced oracle satisfies
\begin{equation}
\E\big[\norm{A(\widehat f_{\baloracle}-f)}^2\big]\le 4 \E[S_{\baloracle,\lambda}] \le C_{\mu,p,C_A} \predrate \label{EqThmWeakMinimaxRatePredErrorBalOracle}
\end{equation}
with $\predrate$ given in \eqref{EqDefPredRate} and a constant $C_{\mu,p,C_A}>0$ depending only on $\mu$, $p$ and $C_A$.
\end{theorem}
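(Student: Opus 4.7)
The first assertion \eqref{EqThmWeakMinimaxBoundPredError} is a deterministic pointwise bound obtained by chaining three already-established results for fixed $t\in(0,d]$: the decomposition $\norm{A(\widehat f_t-f)}^2 \le 2(A_{t,\lambda}+S_{t,\lambda})$ from \cref{PropWeakLoss}; the approximation-error bound $A_{t,\lambda} \le \norm{\mathsf{r_{t,<}^{1/2}}g}^2$ from \cref{LemPropErrorTerms:BoundApproxError} combined with the source-condition estimate $\norm{\mathsf{r_{t,<}^{1/2}}g}^2 \le R^2(\mu+1/2)^{2\mu+1}\abs{r_t'(0)}^{-(2\mu+1)}$ of \cref{CorUpperBoundApproxErrorSourceCond}; and the stochastic bound of \cref{LemPropErrorTerms:BoundStochError}. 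Summing the two upper bounds inside the factor of $2$ yields \eqref{EqThmWeakMinimaxBoundPredError}.

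For the first inequality in \eqref{EqThmWeakMinimaxRatePredErrorBalOracle} the excerpt already records that $A_{\baloracle,\lambda} = S_{\baloracle,\lambda}$ at the balanced oracle, by continuity of the two interpolations and the boundary relations $A_{0,\lambda}\ge S_{0,\lambda}$, $A_{d,\lambda}\le S_{d,\lambda}$ from \cref{LemPropErrorTerms}. \cref{PropWeakLoss} therefore gives pointwise $\norm{A(\widehat f_{\baloracle}-f)}^2 \le 2(A_{\baloracle,\lambda}+S_{\baloracle,\lambda}) = 4S_{\baloracle,\lambda}$, and the first inequality is obtained by taking expectations.

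To bound $\E[S_{\baloracle,\lambda}]$ by $\predrate$, the key step is an either-or argument on the random intrinsic regularisation parameter $\abs{r_{\baloracle}'(0)}$. The upper bound on $A_{t,\lambda}$ from \cref{CorUpperBoundApproxErrorSourceCond} is decreasing in $\abs{r_t'(0)}$, while the upper bound on $S_{t,\lambda}$ from \cref{LemPropErrorTerms:BoundStochError} is increasing in $\abs{r_t'(0)}$. Fix any deterministic $\rho>0$. Using $S_{\baloracle,\lambda}=A_{\baloracle,\lambda}$, a case analysis on whether $\abs{r_{\baloracle}'(0)}\ge \rho$ or $\abs{r_{\baloracle}'(0)}<\rho$ shows that in either case
\begin{equation*}
S_{\baloracle,\lambda} \le R^2(\mu+1/2)^{2\mu+1}\rho^{-(2\mu+1)} + \norm{\mathsf{h_\rho}\xi}^2, \qquad h_\rho(x)\coloneqq (\rho x)^{1/2}\wedge 1.
\end{equation*}
Taking expectations under \eqref{AssGN} gives $\E[S_{\baloracle,\lambda}] \le C_\mu R^2 \rho^{-(2\mu+1)} + \delta^2 \sum_{i=1}^D (\rho \lambda_i^2)\wedge 1$.

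Under \eqrefAssPSDupper{} with $p>1/2$, I would split the variance sum at $i_0 \asymp (\rho C_A^2)^{1/(2p)}$: the low-index part contributes at most $i_0$, while the tail $\rho C_A^2\sum_{i>i_0} i^{-2p} \lesssim_p \rho C_A^2 i_0^{1-2p}$ is of the same order $\rho^{1/(2p)}$, so the sum is $\lesssim_{p,C_A}\rho^{1/(2p)}$ uniformly in $D$. Balancing $R^2\rho^{-(2\mu+1)}$ against $\delta^2 \rho^{1/(2p)}$ at $\rho_*\asymp (R^2/\delta^2)^{2p/(4\mu p+2p+1)}$ produces the common value $R^{2/(4\mu p+2p+1)}\delta^{(8\mu p+4p)/(4\mu p+2p+1)}=\predrate$, which furnishes the constant $C_{\mu,p,C_A}$. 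The main subtlety is that $\baloracle$, and hence $\abs{r_{\baloracle}'(0)}$, depend nonlinearly on the noise and are random, so one cannot just set $\rho\coloneqq\abs{r_{\baloracle}'(0)}$ and optimise; the dichotomy circumvents this by comparing against a deterministic $\rho$ before taking expectations, and the assumption $p>1/2$ enters precisely to ensure tail summability of $\sum i^{-2p}$.
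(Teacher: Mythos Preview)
Your proof is correct and follows essentially the same route as the paper. The only cosmetic differences are that the paper phrases your dichotomy as the chain $A_{\baloracle,\lambda}\wedge S_{\baloracle,\lambda}\le \inf_{\rho>0}\big(\text{bound}_A(\rho)\vee\text{bound}_S(\rho)\big)\le \inf_{\rho>0}\big(\text{bound}_A(\rho)+\text{bound}_S(\rho)\big)$ (which is exactly what your case split proves), and bounds the sum $\sum_i(\rho\lambda_i^2\wedge 1)$ via a Riemann integral rather than your discrete splitting at $i_0$; both yield $\lesssim_{p,C_A}\rho^{1/(2p)}$ and the same optimal $\rho_*$.
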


\begin{proof}
By \cref{LemPropErrorTerms:BoundApproxError,CorUpperBoundApproxErrorSourceCond}, the approximation error satisfies
\begin{equation}
A_{t,\lambda} \le R^2 (\mu+1/2)^{2\mu+1} \abs{r_t'(0)}^{-2\mu-1}. \label{EqUpperBoundApproxErrorSource}
\end{equation}
Combining the bound in \cref{LemPropErrorTerms:BoundStochError} and \eqref{EqUpperBoundApproxErrorSource} with \cref{PropWeakLoss} yields the first claim.

Assume \eqref{AssGN} and \eqrefAssPSDupper{} with $p>1/2$. If $Af=0$, the second claim is clear. Suppose $Af \neq 0$ such that $\baloracle > 0$ and $\abs{r_{\baloracle}'(0)} > 0$. Since the upper bound for $A_{t,\lambda}$ is decreasing in $\abs{r_t'(0)}$ and the upper bound for $S_{t,\lambda}$ is increasing in $\abs{r_t'(0)}$ and $A_{\baloracle,\lambda}=S_{\baloracle,\lambda}$, we obtain by \cref{PropWeakLoss}
\begin{align}
\E\big[\norm{A(\widehat f_{\baloracle}-f)}^2\big]
&\le 2\E[A_{\baloracle,\lambda}+S_{\baloracle,\lambda}] \label{EqRateStochError} \\
&= 4\E[A_{\baloracle,\lambda} \wedge S_{\baloracle,\lambda}] \notag\\
&\le 4\E\big[R^2(\mu+1/2)^{2\mu+1} \abs{r_{\baloracle}'(0)}^{-2\mu-1}\wedge \norm{(\mathsf{(\abs{r_{\baloracle}'(0)}x)^{1/2}\wedge 1})\xi}^2\big]  \notag\\
&\le 4\E\Big[\inf_{\rho>0}\big( R^2(\mu+1/2)^{2\mu+1} \rho^{-2\mu-1} \vee \norm{(\mathsf{(\rho x)^{1/2}\wedge 1})\xi}^2\big)\Big]  \notag\\
&\le 4\inf_{\rho>0}\E\big[ R^2(\mu+1/2)^{2\mu+1} \rho^{-2\mu-1}+\norm{(\mathsf{(\rho x)^{1/2}\wedge 1})\xi}^2\big]  \notag\\
&\le 4\inf_{\rho>0}\Big( R^2(\mu+1/2)^{2\mu+1} \rho^{-2\mu-1} + (C_A \vee 1)^2 \delta^2 \sum_{i=1}^D \big((\rho i^{-2p})\wedge 1\big)\Big). \notag
\end{align}
Note that $\rho x^{-2p} \le 1$ if and only if $x \ge \rho^{1/(2p)}$. A Riemann sum approximation yields
\begin{equation}
\sum_{i=1}^D \big((\rho i^{-2p})\wedge 1\big) \le \int_0^{D} \big((\rho x^{-2p})\wedge 1\big) dx
\le \tfrac{2p}{2p-1} \rho^{1/(2p)}. \label{EqBoundExpWeakStochError}
\end{equation}
Hence,
\begin{equation*}
\E\big[\norm{A(\widehat f_{\baloracle}-f)}^2\big] \le 4 \inf_{\rho>0} \big( R^2(\mu+1/2)^{2\mu+1}\rho^{-2\mu-1} + (C_A \vee 1)^2\delta^2\tfrac{2p}{2p-1}\rho^{1/(2p)} \big).
\end{equation*}
The sum is minimal for
\begin{equation*}
\rho = \big( 2(C_A \vee 1)^{-2}(2p-1)(\mu+1/2)^{2\mu+2}R^2 \delta^{-2} \big)^{2p/(4\mu p+2p+1)}>0,
\end{equation*}
which yields the rate in \eqref{EqThmWeakMinimaxRatePredErrorBalOracle}.
Inserting $A_{\baloracle,\lambda}=S_{\baloracle,\lambda}$ in \eqref{EqRateStochError}, the second claim follows.
\end{proof}

\begin{remark} \label{RemDerivativeRespolZero:tradeoff}
As we see in the bound \eqref{EqThmWeakMinimaxBoundPredError} on the prediction error, the quantity $\abs{r_t'(0)}$ controls the size of the approximation and the stochastic error term. The choice of the iteration index $t$ therefore reflects a tradeoff between both errors that can be compared to the bias-variance tradeoff.
For deterministic inverse problems, \citet{Han1995} calls $\abs{r_k'(0)}$, $k \in \N$, the \emph{modulus of convergence}.
\end{remark}

\begin{remark}\label{RemNoise2Moments}
Note that the distribution of the noise $\xi$ only needs to satisfy $\E[\xi_i^2]=\delta^2$, $i=1,\dots,D$, to prove \eqref{EqThmWeakMinimaxRatePredErrorBalOracle}. So, this bound holds for any centred error $\xi$ with covariance matrix $\delta^2 \id_D$. In particular, no Gaussian-type high probability bound is needed to establish the error rate,
as required in the statistical CG literature before.

Let us also emphasise that the bounds of \cref{ThmWeakMinimax} are \emph{dimension-free} because we assumed \eqrefAssPSDupper{} with $p>1/2$ so that $AA^\top$ is trace class and \eqref{EqBoundExpWeakStochError} holds. This leads to the conjecture that the same result also holds
in the infinite-dimensional Hilbert space setting.
\end{remark}

We have shown in \cref{ThmWeakMinimax} that the prediction estimator $\widehat{g}_{\baloracle} = A\widehat{f}_{\baloracle}$ at the balanced oracle attains the error rate $\predrate$. The following proposition, which is proved in \cref{SecProofsAnalysisPredError}, gives the corresponding lower bound.

\begin{proposition} \label{PropWeakErrorLowerBound}
Assume \eqrefAssPSDlower{} with $p>0$ and \eqref{AssGN}. We have
\begin{equation*}
\inf_{\widehat{f}} \sup_{f: \, \sourcecondf} \E_f\big[\norm{A( \widehat{f} - f)}^2\big] \ge c_{\mu,p,c_A} \predrate
\end{equation*}
for $\delta/R \le \tilde{c}_{\mu,p,c_A}$, provided $(R^2 \delta^{-2})^{1/(4\mu p+2p+1)} \lesssim_{\mu,p,c_A} D$, where the infimum is taken over all $\R^P$-valued estimators $\widehat{f}$ and $\predrate$ is given in \eqref{EqDefPredRate}. The positive constants $c_{\mu,p,c_A}$, $\tilde{c}_{\mu,p,c_A}$ depend only on $\mu$, $p$ and $c_A$.
\end{proposition}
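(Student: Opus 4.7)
My plan is to reduce to a Gaussian sequence model via the SVD and then apply Assouad's cube lemma to a hypothesis class supported in a carefully tuned frequency band. In the SVD basis, the observation splits as $Y_i = g_i + \xi_i$ with $g_i = \lambda_i f_i$, $\xi_i \overset{\text{iid}}{\sim} \mathcal N(0,\delta^2)$, and the prediction error becomes $\norm{A(\widehat f - f)}^2 = \sum_{i=1}^D (\widehat g_i - g_i)^2$. The source condition $\sourcecondf$ reads $\sum_i \lambda_i^{-(4\mu+2)} g_i^2 \le R^2$, and the lower bound $\lambda_i \ge c_A i^{-p}$ from \eqrefAssPSDlower{} implies that any sequence $g$ with $\sum_i i^{(4\mu+2)p} g_i^2 \le c_A^{4\mu+2} R^2$ belongs to the ellipsoid. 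It therefore suffices to prove the lower bound on this weighted-$\ell^2$ ball, a standard Sobolev-type sequence problem.

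Fix a frequency scale $m \in \N$ to be chosen and consider the hypothesis cube
\begin{equation*}
\mathcal H = \big\{ g^{(b)} : b \in \{-1,+1\}^m \big\}, \qquad g^{(b)}_i = \varepsilon\, b_{i-m}\, \mathbf 1_{\{m < i \le 2m\}},
\end{equation*}
with amplitude $\varepsilon>0$ to be chosen. Since $\sum_{i=m+1}^{2m} i^{(4\mu+2)p} \le (2m)^{(4\mu+2)p+1}$, the admissibility condition $\varepsilon^2 m^{(4\mu+2)p+1} \lesssim c_A^{4\mu+2} R^2$ ensures $\mathcal H$ lies in the ellipsoid. Two hypotheses $g^{(b)}, g^{(b')}$ differing in one coordinate yield a KL divergence $\tfrac{1}{2\delta^2}\norm{g^{(b)}-g^{(b')}}^2 = 2\varepsilon^2/\delta^2$, so choosing $\varepsilon^2 = c\delta^2$ with a small absolute constant $c$ makes the neighbour-KL bounded. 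Assouad's lemma (see e.g.\ \citet[Theorem~2.12]{Tsy2009}) then gives
\begin{equation*}
\inf_{\widehat g}\sup_{g \in \mathcal H}\E\big[\norm{\widehat g - g}^2\big] \gtrsim m\,\varepsilon^2 \asymp m\,\delta^2.
\end{equation*}

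Finally I balance the two constraints by choosing $m \asymp_{\mu,p,c_A} (R^2/\delta^2)^{1/(4\mu p + 2p + 1)}$, which saturates the ellipsoid constraint up to constants and delivers a lower bound of order $R^{2/(4\mu p + 2p + 1)} \delta^{(8\mu p + 4p)/(4\mu p + 2p + 1)} = \predrate$. The assumption $(R^2\delta^{-2})^{1/(4\mu p + 2p+1)} \lesssim_{\mu,p,c_A} D$ ensures $2m \le D$ so the indices used are available, while the small-noise hypothesis $\delta/R \le d_{\mu,p,c_A}$ ensures $m \ge 1$ so that a nontrivial cube exists. The main piece of bookkeeping is tracking the dependence on $c_A$ through both the embedding of the weighted ball into the ellipsoid and the choice of $m$; this only affects the numerical constants $c_{\mu,p,c_A}$ and $d_{\mu,p,c_A}$ and poses no conceptual obstacle, since everything else is standard Gaussian sequence-model machinery.
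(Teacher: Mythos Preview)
Your proof is correct and follows a genuinely different route from the paper. The paper works directly in the $f$-coordinates on the first $m$ indices, invokes Johnstone's coordinate-wise minimax decomposition for product hyperrectangles \cite[Proposition~4.16 and (4.40)]{Joh2017} to get $\inf_{\widehat f}\sup_{\Theta^m(\eta)}\E\norm{A(\widehat f-f)}^2 = \gamma\sum_{i=1}^m \lambda_i^2(\eta^2\wedge\delta^2\lambda_i^{-2})$ with $\gamma\in[2/5,1]$, and then sets $\eta=\delta m^p$. You instead pass to the $g$-coordinates, place the cube on the band $(m,2m]$, and invoke Assouad with $\varepsilon\asymp\delta$; this is the textbook nonparametric approach and is perfectly sound here. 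The paper's route avoids the information-theoretic reduction and gives a slightly cleaner tracking of constants (only $m\le D$ rather than $2m\le D$ is needed), while yours is more self-contained and does not appeal to the bounded-normal-mean machinery. Both yield the same rate with constants depending only on $\mu,p,c_A$.
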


\begin{remark}[Role of the dimension] \label{RemMinimaxTruncationTimeWeak}
In the proof of \cref{PropWeakErrorLowerBound}, we need to consider alternatives in the first
\begin{equation*}
t_{\mu,p,R}^{\mathrm{pred}}(\delta) \cong_{\mu,p,c_A} (R^2 \delta^{-2})^{1/(4\mu p+2p+1)}
\end{equation*}
SVD entries of the signal such that we  assume that $D$ is at least of this order. In this case, the error rate $\predrate$ is minimax.
In the case $D \ll t_{\mu,p,R}^{\mathrm{pred}}(\delta)$, we have $\E[\norm{A(\widehat f_{d}-f)}^2]=\E[\norm{\xi}^2]=\delta^2 D \ll \predrate$ and thus by \cref{RemWeakErrorDN} also $\E[\norm{A(\widehat f_{\baloracle}-f)}^2]\lesssim\delta^2 D \ll \predrate$.
In fact, the dimension is so small that regularisation is not needed and iterating until the final index $d$ is optimal. The corresponding lower bound for the prediction error is indeed of order $\delta^2 D$, which can be shown by the generalised Fano method.

If we view \eqref{EqModel} as a discretisation of an underlying continuous inverse problem, which we approach as $D(\delta),P(\delta)\to\infty$ for $\delta\to 0$,  we might rather want to consider sequences $f^{(\delta)}\in \R^{P(\delta)}$, $A^{(\delta)}\in\R^{D(\delta) \times P(\delta)}$ of discretised functions and operators satisfying the source condition \eqref{EqDefSourceCond} and the spectral decay condition \textup{(\hyperref[AssPSD]{\textbf{PSD}($p_{\delta}$,$c_{A,\delta}$,$C_{A,\delta}$)})} with possibly varying parameters $\mu_\delta, R_\delta, p_\delta, c_{A,\delta},C_{A,\delta}$. If these parameters remain bounded away from zero and infinity and $p_{\delta}>1/2+\varepsilon$ for some fixed $\varepsilon>0$, then the same proofs, but with much heavier notation, yield in Theorem \ref{ThmWeakMinimax}
\begin{equation*}
\limsup_{\delta\to 0}\mathcal{R}^{*, \mathrm{pred}}_{\mu_{\delta},p_{\delta},R_{\delta}}(\delta)^{-1}\E\big[\norm{A^{(\delta)}(\widehat f_{\baloracle}-f^{(\delta)})}^2\big] <\infty  
\end{equation*}
and in Proposition \ref{PropWeakErrorLowerBound}
\begin{equation*}
\liminf_{\delta\to 0}\mathcal{R}^{*, \mathrm{pred}}_{\mu_{\delta},p_{\delta},R_{\delta}}(\delta)^{-1}\inf_{\widehat{f}} \sup_{f: \, \norm{f}_{\mu_{\delta}} \le R_{\delta}} \E_f\big[\norm{A^{(\delta)}( \widehat{f} - f)}^2\big] >0,
\end{equation*}
provided $\lim_{\delta \to 0} D(\delta) \delta^{2/(4\mu_{\delta}p_{\delta}+2p_{\delta}+1)} = \infty$. 
Similar extensions also hold for all subsequent results.
\end{remark}

\begin{remark}[Comparison with Showalter's method] \label{RemShowalter}
The CG algorithm can be compared to {Showalter's method} (or {asymptotic regularisation}), which is just the classical {gradient flow} and appears as continuous-time limit of standard iterative linear regularisation schemes. The Showalter estimator of $f$ at $s \ge 0$ is given by
$\widehat{f}_s^{\mathrm{SW}} = A^\dagger (\mathsf{1-\exp(-s x)}) Y$  \cite[Example~4.7]{EngHanNeu1996}.
Its prediction error for deterministic $s \ge 0$ can be decomposed as
\begin{equation}
\E\big[\norm{A(\widehat{f}_s^{\mathrm{SW}} - f)}^2\big] = \norm{\mathsf{e^{-s x}} Af}^2 + \E\big[ \norm{(\mathsf{1-e^{-s x}})\xi}^2 \big] \eqqcolon B_{s,\lambda}^2(f) + V_{s,\lambda}. \label{EqBiasVarianceShowalter}
\end{equation}
Inserting $s \triangleq \tfrac{1}{2} \abs{r_t'(0)}$, the then random term $\norm{\mathsf{e^{-s x}} Af}^2$ coincides with the upper bound on $A_{t,\lambda}$ given in \cref{LemPropErrorTerms:BoundApproxError}. Using that $1-e^{-y} \le y \wedge 1 \le \tfrac{e}{e-1}(1-e^{-y})$ for $y \ge 0$, the stochastic error term $\norm{(\mathsf{1-e^{-s x}})\xi}^2$ of $\widehat{f}_s^{\mathrm{SW}}$ is up to constants equal to $\norm{(\mathsf{s x \wedge 1}) \xi}^2$.
For $s \triangleq \frac12\abs{r_t'(0)}$, this is smaller than the upper bound $\norm{(\mathsf{(\abs{r_t'(0)}x)^{1/2}\wedge 1})\xi}^2$ on $S_{t,\lambda}$ from \cref{LemPropErrorTerms:BoundStochError}.

We achieve, however, the same order of the oracle prediction error under the following mild condition on the singular value decay, where $C_{\lambda} > 0$ is a constant:
\begin{equation}
\sum_{i=j}^D {\lambda}_i^2 \le C_{\lambda} j\lambda_j^2 , \quad j=2,\dots,D.\label{AssShowalter}
\end{equation}
The condition \eqref{AssShowalter} is easily verified under polynomial singular value decay \eqref{AssPSD} with $p>1/2$, but is also true in the case of exponential or Gaussian-type decay.

Assuming for simplicity distinct singular values $(\lambda_j)$, we obtain for deterministic $\rho^{-1}\in({\lambda}_j^2,{\lambda}_{j-1}^2]$
\begin{align}
 &\E\big[\norm{(\mathsf{(\rho x)^{1/2} \wedge 1}) \xi}^2\big] = \delta^2 \big(\#\{i=1,\dots,D\,|\,\lambda_i^2\ge \rho^{-1}\}+{\textstyle\sum_{i:\, \lambda_i^2<\rho^{-1}}} \rho\lambda_i^2\big)  \notag \\
 &
\le  \delta^2 \big( (j-1)+ {\lambda}_j^{-2} \textstyle{\sum_{i=j}^D} {\lambda}_i^2 \big)\le (2C_{\lambda}+1) \delta^2 (j-1) \notag \\
&=(2C_{\lambda}+1)\E\big[\norm{{\bf 1}(\mathsf{\rho x\ge 1}) \xi}^2\big]\le (2C_{\lambda}+1)\E\big[\norm{(\mathsf{(\rho x) \wedge 1}) \xi}^2\big]. \label{RemShowalterIneq}
\end{align}
Combining all possible intervals for $\rho^{-1}$ and considering the case $\rho^{-1} \le {\lambda}_D^2$ separately, the inequality \eqref{RemShowalterIneq} holds for all $\rho\ge \lambda_1^{-2}$. We then obtain by the monotonicity argument with respect to $\rho$ in the proof of \cref{ThmWeakMinimax} and the bias-variance decomposition \eqref{EqBiasVarianceShowalter} that
\begin{align*}
\E\big[\norm{A(\widehat f_{\baloracle}-f)}^2\big] &\le 2\E[A_{\baloracle,\lambda}+S_{\baloracle,\lambda}] \\
&\lesssim \inf_{\rho \ge \lambda_1^{-2}} \big( \norm{\mathsf{e^{-\rho x/2}}Af}^2 + \E\big[\norm{(\mathsf{(\rho x)^{1/2}\wedge 1})\xi}^2\big] \big)\\
&\lesssim_{C_{\lambda}} \inf_{s\ge \lambda_1^{-2}}\big(B_{s,\lambda}^2(f)+V_{s,\lambda}\big) =\inf_{s\ge \lambda_1^{-2}} \E\big[\norm{A(\widehat f_s^{\mathrm{SW}}-f)}^2\big].
\end{align*}
The infimum on the right-hand side can be taken over $s \ge 0$ for sufficiently small $\delta > 0$ whenever $f \neq 0$.

The interesting fact is that the constant involved does not depend on the signal $f$ so that, in particular, all uniform convergence rates over classes of signals for Showalter's method under \eqref{AssGN} directly transfer to CG, e.g.\ under generalised source conditions~\cite{MatHof2008}.

For exponential decay $\lambda_i\thicksim e^{-pi}$, $p>0$, and polynomial source condition, we obtain similarly to the proof of \eqref{EqThmWeakMinimaxRatePredErrorBalOracle} the prediction error rate
\begin{equation*}
\sup_{f: \, \sourcecondf}\E\big[\norm{A(\widehat{f}_{\baloracle} - f)}^2\big] \lesssim_{\mu,p} \log(e + R \delta^{-1})\delta^2.
\end{equation*}
This rate is minimax optimal, the lower bound follows exactly as \cref{PropWeakErrorLowerBound} by replacing the condition on the singular values accordingly, compare also
\citet[Theorem~1]{Tsy2000}.
\end{remark}

\begin{proposition} \label{PropWeakMinimaxDN}
Suppose $f$ satisfies the source condition $\sourcecondf$. Under \eqref{AssDN}, for all $t \in (0,d]$, we can bound
\begin{equation*}
\norm{A(\widehat f_t-f)}^2 \le 2R^2(\mu+1/2)^{2\mu+1} \abs{r_t'(0)}^{-2\mu-1}+2\delta^2
\end{equation*}
and
\begin{equation}
R_t \le R(\mu + 1/2)^{\mu + 1/2} \abs{r_t'(0)}^{-\mu-1/2} + \delta. \label{EqDNBoundResidual}
\end{equation}
\end{proposition}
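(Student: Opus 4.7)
Both bounds will come out as direct specialisations of estimates already established for arbitrary noise, once the deterministic noise constraint $\norm{\xi}\le\delta$ is exploited. The plan is to treat them one at a time.

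For the prediction error bound, the starting point is inequality \eqref{EqThmWeakMinimaxBoundPredError} in \cref{ThmWeakMinimax}, which was derived without any assumption on the noise and requires only the source condition $\sourcecondf$. The Fourier multiplier $(\abs{r_t'(0)}x)^{1/2}\wedge 1$ is pointwise bounded by $1$ on the spectrum of $AA^\top$, so under \eqref{AssDN} one immediately has
\begin{equation*}
\norm{(\mathsf{(\abs{r_t'(0)}x)^{1/2}\wedge 1})\xi}^2 \le \norm{\xi}^2 \le \delta^2.
\end{equation*}
Substituting this into \eqref{EqThmWeakMinimaxBoundPredError} yields the first claim.

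For the residual bound, I would begin with \cref{PropRtbound}, which gives $R_t^2 = \norm{\mathsf{r_t}}_Y^2 \le \norm{\mathsf{r_{t,<}^{1/2}}}_Y^2$. Decomposing $Y=g+\xi$ and applying the triangle inequality in the Euclidean norm produces
\begin{equation*}
R_t \le \norm{\mathsf{r_{t,<}^{1/2}}g} + \norm{\mathsf{r_{t,<}^{1/2}}\xi}.
\end{equation*}
The signal term on the right is precisely the quantity controlled by \cref{CorUpperBoundApproxErrorSourceCond}, which supplies the prefactor $R(\mu+1/2)^{\mu+1/2}\abs{r_t'(0)}^{-\mu-1/2}$. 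For the noise term, \cref{LemPropInterpolResidualPolynomials} shows that $r_t$ is nonnegative and decreasing on $[0,x_{1,t}]$ with $r_t(0)=1$, so $r_{t,<}$ takes values in $[0,1]$ throughout the spectrum; consequently $\norm{\mathsf{r_{t,<}^{1/2}}\xi}\le\norm{\xi}\le\delta$ under \eqref{AssDN}. Adding the two contributions gives \eqref{EqDNBoundResidual}.

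I do not expect a substantive obstacle: both inequalities are essentially one-line consequences of the pathwise machinery already set up in \cref{SecInterpolConjGradientMethod} combined with the deterministic noise constraint. The only point worth verifying is that $0\le r_{t,<}\le 1$ pointwise, which is immediate from the nonnegativity, monotonicity, and normalisation $r_t(0)=1$ collected in \cref{LemPropInterpolResidualPolynomials}.
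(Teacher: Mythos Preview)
Your proposal is correct and follows essentially the same route as the paper: the first bound is obtained by specialising \eqref{EqThmWeakMinimaxBoundPredError} via $\norm{(\mathsf{(\abs{r_t'(0)}x)^{1/2}\wedge 1})\xi}\le\norm{\xi}\le\delta$, and the second by combining \cref{PropRtbound} with the triangle inequality, \cref{CorUpperBoundApproxErrorSourceCond}, and $\norm{\mathsf{r_{t,<}^{1/2}}\xi}\le\norm{\xi}\le\delta$. The paper's proof is slightly more terse but uses exactly these ingredients.
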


\begin{proof}
The first part immediately follows from \eqref{EqThmWeakMinimaxBoundPredError}. \cref{PropRtbound} implies
\begin{equation*}
R_t \le \norm{\mathsf{r_{t,<}^{1/2}}Y} \le \norm{\mathsf{r_{t,<}^{1/2}}g} + \norm{\xi}.
\end{equation*}
\cref{CorUpperBoundApproxErrorSourceCond} and Assumption~\eqref{AssDN} yield the second claim.
\end{proof}

\begin{remark} \label{RemDerivativeRespolZero:comparisonDN}
In \eqref{EqDNBoundResidual}, we obtain the same upper bound on the residual norm as \citet[Lemma~7.10]{EngHanNeu1996} with a slightly smaller constant on the first summand.
\end{remark}


\section{Early stopping and its prediction error} \label{SecEarlyStopWeakNorm}

In \cref{SecAnalysisPredError}, we showed that the balanced oracle achieves the optimal MSE rate. Similarly as  \citet{BlaHofRei2018EJS,BlaHofRei2018SIAM}, where the balanced oracle stopping index was deterministic though, we mimic $\baloracle$ by the fully data-driven stopping rule $\tau$ from \cref{DefEarlyStopRule}.

\begin{lemma}\label{LemTauwRes}
The balanced oracle $\baloracle$ can be equally expressed as
\begin{equation*}
\baloracle=\inf\big\{t \in [0,d] \,|\, R_t^2\le \norm{\xi}^2  +2\scapro{\xi}{\mathsf{r_{t,<}}g}\big\}.
\end{equation*}
\end{lemma}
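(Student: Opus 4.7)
The claim is a purely algebraic identification of the two sets whose infima define $\baloracle$, so the strategy is to show that the event $\{A_{t,\lambda}\le S_{t,\lambda}\}$ is identical to $\{R_t^2\le \norm{\xi}^2+2\scapro{\xi}{\mathsf{r_{t,<}}g}\}$ pointwise in $t$, from which the equality of the infima is immediate.

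The plan is to expand both sides of $A_{t,\lambda}\le S_{t,\lambda}$ using $Y=g+\xi$ and then cancel matching terms. First I would use the decomposition
\begin{equation*}
\norm{\mathsf{r_{t,<}^{1/2}}Y}^2 = \norm{\mathsf{r_{t,<}^{1/2}}g}^2 + 2\scapro{\mathsf{r_{t,<}^{1/2}}g}{\mathsf{r_{t,<}^{1/2}}\xi} + \norm{\mathsf{r_{t,<}^{1/2}}\xi}^2,
\end{equation*}
noting that $\mathsf{r_{t,<}^{1/2}}$ is the symmetric operator $h(AA^\top)$ for $h(x)=\sqrt{r_{t,<}(x)}\ge 0$ (well-defined since $r_{t,<}\ge 0$), so the mixed term can be rewritten as $2\scapro{\xi}{\mathsf{r_{t,<}}g}$ by self-adjointness. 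Substituting into the definition of $A_{t,\lambda}$ yields
\begin{equation*}
A_{t,\lambda} = R_t^2 - \norm{\mathsf{r_{t,<}^{1/2}}\xi}^2 - 2\scapro{\xi}{\mathsf{r_{t,<}}g},
\end{equation*}
so the term $\norm{\mathsf{r_{t,<}^{1/2}}g}^2$ cancels.

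Next I would rewrite $S_{t,\lambda}$ by noting that for $x\in[0,x_{1,t}]$ we have $r_{t,<}(x)=r_t(x)\in[0,1]$ by \cref{LemPropInterpolResidualPolynomials:Properties} (nonnegativity together with $r_t(0)=1$ and monotonicity), while $r_{t,<}(x)=0$ outside, so $1-r_{t,<}(x)\in[0,1]$ and
\begin{equation*}
S_{t,\lambda} = \norm{\mathsf{(1-r_{t,<})^{1/2}}\xi}^2 = \norm{\xi}^2 - \norm{\mathsf{r_{t,<}^{1/2}}\xi}^2
\end{equation*}
by the diagonal action of $h(AA^\top)$ on $\xi$ in the SVD basis. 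Inserting both expressions into $A_{t,\lambda}\le S_{t,\lambda}$, the term $\norm{\mathsf{r_{t,<}^{1/2}}\xi}^2$ cancels on both sides and one is left exactly with $R_t^2\le \norm{\xi}^2+2\scapro{\xi}{\mathsf{r_{t,<}}g}$. Taking infima over $t\in[0,d]$ in either form then gives the claimed identity.

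There is essentially no obstacle here: the key observation is simply that $\mathsf{r_{t,<}^{1/2}}$ and $\mathsf{(1-r_{t,<})^{1/2}}$ are well-defined self-adjoint operators because $r_{t,<}$ takes values in $[0,1]$, and the rest is bookkeeping. One minor point worth checking is that the boundary cases $t=0$ and $t=d$ are consistent — at $t=0$ one has $r_{0,<}\equiv 1$ so both inequalities reduce to $\norm{Y}^2\le \norm{\xi}^2 + 2\scapro{\xi}{g}=\norm{Y}^2-\norm{g}^2$, i.e.\ $\norm{g}^2\le 0$, while at $t=d$ both give $R_d^2=0\le \norm{\xi}^2$ (using $r_{d,<}\equiv 0$ by \cref{LemPropInterpolResidualPolynomials:simpleZeros:noninterpol}), matching $A_{d,\lambda}=0\le \norm{\xi}^2=S_{d,\lambda}$.
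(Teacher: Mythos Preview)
Your proposal is correct and follows essentially the same approach as the paper: both arguments expand $\norm{\mathsf{r_{t,<}^{1/2}}Y}^2$ via $Y=g+\xi$, cancel $\norm{\mathsf{r_{t,<}^{1/2}}g}^2$, and use $S_{t,\lambda}=\norm{\xi}^2-\norm{\mathsf{r_{t,<}^{1/2}}\xi}^2$ to arrive at the identity $A_{t,\lambda}-S_{t,\lambda}=R_t^2-\norm{\xi}^2-2\scapro{\xi}{\mathsf{r_{t,<}}g}$. The only cosmetic difference is that the paper computes the difference $R_t^2-\norm{\xi}^2-2\scapro{\xi}{\mathsf{r_{t,<}}g}$ directly and simplifies to $A_{t,\lambda}-S_{t,\lambda}$, whereas you simplify $A_{t,\lambda}$ and $S_{t,\lambda}$ separately before comparing.
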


\begin{proof}
By the definitions of $A_{t,\lambda}$ and $S_{t,\lambda}$, for any $t \in [0,d]$, we deduce
\begin{align}
R_t^2-\norm{\xi}^2 -2\scapro{\xi}{\mathsf{r_{t,<}}g} \notag
&=A_{t,\lambda}+\norm{\mathsf{r_{t,<}^{1/2}}Y}^2-\norm{\mathsf{r_{t,<}^{1/2}}g}^2-\norm{\xi}^2 -2\scapro{\xi}{\mathsf{r_{t,<}}g} \notag\\
&=A_{t,\lambda}+\norm{\mathsf{r_{t,<}^{1/2}}\xi}^2-\norm{\xi}^2 \notag\\
&= A_{t,\lambda}- S_{t,\lambda}. \label{EqDiffWeakErrors}
\end{align}
We find back the original definition of $\baloracle$.
\end{proof}

This alternative expression of $\baloracle$ motivates the choice of the residual-based stopping rule $\tau$, as explained in \cref{RemDiscrepancyPrinciple}. Note that $R_{\baloracle}^2 = \norm{\xi}^2 + 2 \scapro{\xi}{\mathsf{r_{\baloracle,<}} g}$ holds due to $A_{\baloracle, \lambda} = S_{\baloracle, \lambda}$.

\begin{remark} \label{RemDiscrepancyPrinciple}
The integer-valued discrepancy principle $\ceil{\tau}$ with $\kappa \coloneqq c^2 \delta^2$,  $c>1$, is widely used in  deterministic inverse problems, see \citet[Section~4.3]{EngHanNeu1996}.

In \cref{LemTauwRes}, the cross term $\scapro{\xi}{\mathsf{r_{\baloracle,<}}g}$ will be of negligible order, and we would like to set a threshold $\kappa$ close to $\norm{\xi}^2$ for the stopping rule $\tau$. In the Gaussian model \eqref{AssGN} we have $\E[\norm{\xi}^2]=\delta^2 D$ and  $\kappa \coloneqq \delta^2 D$ is a natural choice. In practice, the noise level $\delta^2$ needs to be estimated. As pursued in \citet{BlaHofRei2018EJS,BlaHofRei2018SIAM}, we analyse $\tau$ for general $\kappa >0$ and will later derive error bounds involving the deviation $\abs{\kappa - \delta^2 D}$.
\end{remark}

First, let us determine how much the prediction estimators at the stopping rule $\tau$ and at the balanced oracle $\baloracle$ differ.

\begin{proposition} \label{PropBoundDistanceEarlyStopWeakBal}
Under \eqref{AssGN}, we have
\begin{equation*}
\E\big[\norm{A(\widehat f_\tau-\widehat f_{\baloracle})}^2\big]\le 2\E[\abs{\scapro{\xi}{\mathsf{r_{\baloracle,<}}g}}] + \delta^2\sqrt{2D} + \abs{\kappa - \delta^2 D}.
\end{equation*}
\end{proposition}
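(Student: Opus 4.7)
The plan is to show that both $R_{\tau}^{2}$ and $R_{\baloracle}^{2}$ admit explicit expressions and then to exploit the fact (Lemma~\ref{LemResNormMaxts}) that differences of prediction estimators are controlled by differences of squared residual norms. The monotonicity and continuity of $t\mapsto R_t^2$ together with the defining equations of $\tau$ and $\baloracle$ will collapse the problem to a probabilistic estimate on $\norm{\xi}^2$, which is a chi-squared variable under \eqref{AssGN}.

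First, I would apply Lemma~\ref{LemResNormMaxts} to write
\begin{equation*}
\norm{A(\widehat f_\tau-\widehat f_{\baloracle})}^2 \le \bigabs{R_\tau^2 - R_{\baloracle}^2}.
\end{equation*}
By the continuity of $t \mapsto R_t^2$ and the defining infimum in Definition~\ref{DefEarlyStopRule} one has $R_\tau^2=\kappa$ (up to an irrelevant boundary case $\tau=0$, which only strengthens the inequality). Similarly, Lemma~\ref{LemTauwRes} together with the identity \eqref{EqDiffWeakErrors} in its proof, combined with $A_{\baloracle,\lambda}=S_{\baloracle,\lambda}$ at the balanced oracle, gives $R_{\baloracle}^2=\norm{\xi}^2+2\scapro{\xi}{\mathsf{r_{\baloracle,<}}g}$.

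Next, I would insert these two identities and apply the triangle inequality by inserting $\pm\delta^2 D$:
\begin{equation*}
\bigabs{R_\tau^2 - R_{\baloracle}^2}
\le \abs{\kappa-\delta^2 D} + \bigabs{\norm{\xi}^2-\delta^2 D} + 2\abs{\scapro{\xi}{\mathsf{r_{\baloracle,<}}g}}.
\end{equation*}
Taking expectations, the first term is deterministic; the third term reproduces the first summand in the claim. For the middle term I would use Jensen's inequality and the fact that under \eqref{AssGN}, $\delta^{-2}\norm{\xi}^2\sim \chi_D^2$ has variance $2D$:
\begin{equation*}
\E\bigl[\bigabs{\norm{\xi}^2-\delta^2 D}\bigr]\le \sqrt{\Var(\norm{\xi}^2)} = \delta^2\sqrt{2D}.
\end{equation*}
Assembling the three pieces yields the stated bound.

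The only subtle point is verifying the equality $R_\tau^2=\kappa$; the continuity of the interpolated residual (Definition~\ref{DefInterpolEstimators}, together with the nonlinear interpolation formula for $R_t^2$) ensures that the infimum in Definition~\ref{DefEarlyStopRule} is attained with equality whenever $R_0^2>\kappa$, and otherwise $\tau=0=\baloracle$ (since $R_0^2=\norm{Y}^2$ would already be below the noise level) and the bound is trivial. No concentration or high-probability argument beyond the elementary chi-squared variance computation is needed, which reflects that this proposition is only a first comparison step; the more delicate work lies in the later bounds on $\E[\abs{\scapro{\xi}{\mathsf{r_{\baloracle,<}}g}}]$.
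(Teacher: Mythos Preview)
Your argument is essentially identical to the paper's proof: apply Lemma~\ref{LemResNormMaxts}, substitute $R_\tau^2=\kappa$ and $R_{\baloracle}^2=\norm{\xi}^2+2\scapro{\xi}{\mathsf{r_{\baloracle,<}}g}$, split via $\pm\delta^2 D$, and bound the centred $\chi^2$-term by its standard deviation. One minor slip in your boundary discussion: $\tau=0$ does not force $\baloracle=0$ (the latter requires $Af=0$), but in that case $R_{\baloracle}^2\le R_0^2\le\kappa$ still yields $\abs{R_\tau^2-R_{\baloracle}^2}=R_0^2-R_{\baloracle}^2\le\kappa-R_{\baloracle}^2=\abs{\kappa-R_{\baloracle}^2}$, so the bound goes through unchanged.
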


\begin{proof}
In the case $\kappa \le \norm{Y}^2$, we have $R_{\tau}^2=\kappa$. Otherwise, if $\kappa > \norm{Y}^2$, we stop immediately at $\tau=0$ such that $0 \le R_{\tau}^2 - R_{\baloracle}^2 = \norm{Y}^2 - R_{\baloracle}^2 < \kappa - R_{\baloracle}^2$. Hence, we have in both cases the inequality
\begin{equation*}
\abs{R_{\tau}^2 - R_{\baloracle}^2} \le \abs{\kappa - R_{\baloracle}^2}.
\end{equation*}
\cref{LemPropSquaredResNorm:LemResNormMaxts} implies
\begin{align*}
\E\big[\norm{A(\widehat f_\tau-\widehat f_{\baloracle})}^2\big] &\le \E \big[ \abs{R_{\tau}^2 - R_{\baloracle}^2} \big] \notag \\
&\le \E\big[\abs{\kappa - \norm{\xi}^2 - 2\scapro{\xi}{\mathsf{r_{\baloracle,<}}g}}\big] \notag \\
&\le 2\E[\abs{\scapro{\xi}{\mathsf{r_{\baloracle,<}}g}}] +  \E\big[\abs{\norm{\xi}^2-\delta^2D}\big] + \abs{\kappa - \delta^2 D} \notag \\
&\le 2\E[\abs{\scapro{\xi}{\mathsf{r_{\baloracle,<}}g}}] + \delta^2\sqrt{2D} + \abs{\kappa - \delta^2 D},
\end{align*}
where we used $\E[\abs{\norm{\xi}^2-\delta^2D}] \le \Var(\norm{\xi}^2)^{1/2} = \delta^2\sqrt{2D}$ under \eqref{AssGN}.
\end{proof}

A first idea could be to bound the cross term via Cauchy--Schwarz inequality:
\begin{equation}\label{EqCrossTermCS}
\E[\abs{\scapro{\xi}{\mathsf{r_{\baloracle,<}}g}}]\le \E\big[\norm{\xi}^2\big]^{1/2}\E\big[\norm{\mathsf{r_{\baloracle,<}}g}^2\big]^{1/2}.
\end{equation}
Due to $\E[\norm{\xi}^2]^{1/2}=\delta\sqrt{D}$, this is unfortunately highly suboptimal under \eqref{AssGN}. If $r_{\baloracle,<}$ were independent of $\xi$ or even deterministic as for the cross term in linear methods \cite{BlaHofRei2018EJS,BlaHofRei2018SIAM}, then the inner product would have expectation zero and standard deviation $\delta \E[\norm{\mathsf{r_{\baloracle,<}}g}^2]^{1/2}$, a dimensionless and much tighter bound than \eqref{EqCrossTermCS}. 

In our case of intricate randomness of $r_{\baloracle,<}$, we are going to show a slightly larger bound that grows like $\sqrt{\log d}$ in the dimension, but still establishes a sufficiently low order of the cross term, see \cref{PropCrossTerm}. The probabilistic proof  uses concentration of self-normalised Gaussian processes by analysing the potential complexity of $\mathsf{r_{\baloracle,<}}g$ in $\R^D$. Before, a high probability upper bound on $\abs{r_t'(0)}$ will be established, giving in \cref{CorIterStlambda} an at least linear growth of $t\mapsto S_{t,\lambda}$.

In the sequel, we sometimes write $\lambda(i)^{-2}$ instead of $\lambda_{i}^{-2}$ for notational convenience, where $i \in \{1,\dots,D\}$.

\begin{lemma}\label{LemRhoBound}
Under \eqref{AssGN}, we have for all $z>0$ on an event of probability at least $1-e^{-z}$ the following two implications for all $t \in [0,d]$:
\begin{align*}
&\abs{r_t'(0)} < \lambda_D^{-2} \; \Rightarrow \; \abs{r_t'(0)} < \lambda\Big(\big(\floor{ \tfrac{2e}{e-1}\delta^{-2} S_{t,\lambda}+4(z+\log d)}+1\big) \wedge D\Big)^{-2}, \\
&\abs{r_t'(0)} \ge \lambda_D^{-2} \; \Rightarrow \; D \le \floor{ \tfrac{2e}{e-1}\delta^{-2} S_{t,\lambda}+4(z+\log d)}.
\end{align*}
\end{lemma}

\begin{proof}
For $t=0$, $\abs{r_t'(0)}=0 $ holds. Let $t>0$ and fix $z>0$.
By definition of $S_{t,\lambda}$ and \cref{LemrtBound}, the stochastic error can be lower bounded as
\begin{equation}
S_{t,\lambda} =\norm{\mathsf{(1- r_{t,<})^{1/2}}\xi}^2
\ge \norm{\mathsf{(1-\exp(-\abs{r_t'(0)}x))^{1/2}}\xi}^2
\ge (1-e^{-1})\sum_{i:\, \lambda_i^{-2}\le \abs{r_t'(0)}}\xi_i^2. \label{EqStochErrorLowerBound}
\end{equation}
Let $I_\rho \coloneqq \{i=1,\dots,D \,|\, \lambda_i^{-2}\le\rho\}$, $N(\rho) \coloneqq \#I_\rho$, and introduce the events
\begin{equation*}
\Omega(\rho) \coloneqq \Big\{\sum_{i\in I_\rho}(\delta^{-2}\xi_i^2-1) \ge -2\sqrt{N(\rho)(z+\log d)}\Big\}, \quad \rho>0.
\end{equation*}
By the $\chi^2$-concentration result of \citet[Lemma~1]{LauMas2000}, for fixed $\rho \ge \lambda_1^{-2}$, we obtain
\begin{equation*}
\forall\, \zeta>0:\;\PP \Big( \sum_{i\in I_\rho}(\delta^{-2}\xi_i^2-1) \le -2\sqrt{N(\rho)\zeta} \Big) \le e^{-\zeta}.
\end{equation*}
Since $\Omega(\rho)$ only changes at $\rho=\tilde{\lambda}_i^{-2}$, $i=1,\dots,d$, Bonferroni's inequality yields
\begin{equation*}
\PP \Big( \bigcap_{\rho > 0} \Omega(\rho) \Big) = \PP \Big( \bigcap_{i=1}^d \Omega(\tilde{\lambda}_i^{-2}) \Big) \ge 1- \sum_{i=1}^d \PP \big(\Omega(\tilde{\lambda}_i^{-2})^c\big)
\ge 1 - d e^{-(z+\log d)} = 1-e^{-z}.
\end{equation*}
On $\bigcap_{\rho > 0} \Omega(\rho)\subset \Omega(\abs{r_t'(0)})$, the definition of $\Omega(\abs{r_t'(0)})$ and \eqref{EqStochErrorLowerBound} give
\begin{equation*}
\delta^{-2}(1-e^{-1})^{-1}S_{t,\lambda} \ge N(\abs{r_t'(0)})-2\sqrt{(z+\log d)N(\abs{r_t'(0)})}.
\end{equation*}
The inequality
\begin{equation*}
N(\abs{r_t'(0)})
\le 2\Big(N(\abs{r_t'(0)})-2\sqrt{(z+\log d)N(\abs{r_t'(0)})}\Big)+4(z+\log d),
\end{equation*}
derived from the binomial formula, yields further
\begin{equation}
N(\abs{r_t'(0)}) \le 2\delta^{-2}(1-e^{-1})^{-1}S_{t,\lambda}+4(z+\log d). \label{EqStochErrorLowerBound2}
\end{equation}
In case $\abs{r_t'(0)} < \lambda_D^{-2}$, we additionally have $N(\abs{r_t'(0)}) \le D-1$ such that by definition $\abs{r_t'(0)} <\lambda(N(\abs{r_t'(0)})+1)^{-2}$. By monotonicity of $i \mapsto \lambda(i)^{-2}$ and \eqref{EqStochErrorLowerBound2}, we obtain
\begin{equation*}
\abs{r_t'(0)} < \lambda\Big(\big(\floor{2\delta^{-2}(1-e^{-1})^{-1}S_{t,\lambda}+4(z+\log d)}+1\big) \wedge D\Big)^{-2}.
\end{equation*}
In case $\abs{r_t'(0)} \ge \lambda_D^{-2}$, we have $N(\abs{r_t'(0)})=D$ such that \eqref{EqStochErrorLowerBound2} yields
\begin{equation*}
D \le \floor{2\delta^{-2}(1-e^{-1})^{-1}S_{t,\lambda}+4(z+\log d)}.
\end{equation*}
Therefore, both asserted implications hold for all $t \in [0,d]$ on the same event $\bigcap_{\rho > 0} \Omega(\rho)$, which has probability at least $1-e^{-z}$.
\end{proof}

\begin{corollary} \label{CorIterStlambda}
Under \eqref{AssGN}, we have for all $z>0$ with probability at least $1-e^{-z}$
\begin{equation*}
k <  \floor{\tfrac{2e}{e-1}\delta^{-2} S_{k,\lambda}+4(z+\log d)}+1,\quad k=0,\dots,d.
\end{equation*}
Assuming $\eqref{AssPSD}$ with $p>0$ and $\abs{r_k'(0)} < \lambda_D^{-2}$ in addition, we have on the same event
\begin{equation*}
k \lesssim_{p,c_A,C_A} (\delta^{-2}S_{k,\lambda})^{2p/(2p+1)} + (\log d)^{2p/(2p+1)} + z^{2p/(2p+1)}, \quad k =0,\dots,d.
\end{equation*}
\end{corollary}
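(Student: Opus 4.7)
The plan is to sandwich $\abs{r_k'(0)}$ between an algebraic lower bound coming from the zeros of $r_k$ and the probabilistic upper bound of \cref{LemRhoBound}; the entire probabilistic content is already in that lemma, so this corollary reduces to elementary bookkeeping. Both parts share the same starting observation: for integer $k\in\{1,\dots,d\}$, combining the product formula in \cref{LemPropInterpolResidualPolynomials:Formula} with the interlacing inequality $x_{k+1-i,k}\le\lambda_i^2$ from \cref{LemPropInterpolResidualPolynomials:ZerosEigenvalues:noninterpol} and reindexing $j=k+1-i$ yields
\[
\abs{r_k'(0)}=\sum_{j=1}^k x_{j,k}^{-1}\ge \sum_{i=1}^k \lambda_i^{-2}.
\]

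For the first claim, I keep only the term $i=k$ to obtain $\abs{r_k'(0)}\ge\lambda_k^{-2}$. Since $k\le D$ and $i\mapsto\lambda_i^{-2}$ is nondecreasing, $\lambda_k^{-2}\wedge\lambda_D^{-2}=\lambda_k^{-2}$, so on the $1-e^{-z}$ event of \cref{LemRhoBound} at $t=k$,
\[
\lambda_k^{-2}\le\abs{r_k'(0)}\wedge\lambda_D^{-2}\le \lambda\bigl((\floor{M_k}+1)\wedge D\bigr)^{-2},\quad M_k\coloneqq\tfrac{2e}{e-1}\delta^{-2}S_{k,\lambda}+4(z+\log d).
\]
Inverting the monotone map $i\mapsto\lambda_i^{-2}$ gives $k\le(\floor{M_k}+1)\wedge D\le\floor{M_k}+1$, which is Part~1.

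For the second claim, I invoke the full sum together with \eqref{AssPSD}: the upper singular-value bound $\lambda_i\le C_A i^{-p}$ yields $\lambda_i^{-2}\ge C_A^{-2}i^{2p}$, and a Riemann sum estimate produces
\[
\abs{r_k'(0)}\ge C_A^{-2}\sum_{i=1}^k i^{2p}\ge \tfrac{C_A^{-2}}{2p+1}\,k^{2p+1}.
\]
The added assumption $\abs{r_k'(0)}<\lambda_D^{-2}$ allows me to drop the $\wedge\lambda_D^{-2}$ on the left of \cref{LemRhoBound}, and using $\lambda_n\ge c_A n^{-p}$ together with $D\le\floor{M_k}+1$ whenever the $\wedge D$ binds, the right-hand side is controlled by $c_A^{-2}(M_k+1)^{2p}$ in both cases. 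Combining the matching upper and lower bounds on $\abs{r_k'(0)}$ yields $k^{2p+1}\lesssim_{p,c_A,C_A}(M_k+1)^{2p}$, and raising to the power $1/(2p+1)$ together with the subadditivity $(a+b+c+d)^\alpha\le a^\alpha+b^\alpha+c^\alpha+d^\alpha$ for $\alpha=2p/(2p+1)\in(0,1)$ produces the three-term bound; the constant summand $1$ is absorbed since $k\ge 1$.

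The only subtle point, and where I expect to spend a moment of care, is the interplay between the two truncations $\wedge\lambda_D^{-2}$ and $\wedge D$ in \cref{LemRhoBound}: for Part~1 they are harmless after inverting monotone maps, while for Part~2 the extra hypothesis $\abs{r_k'(0)}<\lambda_D^{-2}$ is tailored precisely so that the effective right-hand side behaves like $\lambda_{\floor{M_k}+1}^{-2}$, which is the step that brings the exponent $2p/(2p+1)$ into the picture.
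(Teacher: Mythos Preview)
Your proof is correct and follows essentially the same approach as the paper's: both establish the lower bound $\abs{r_k'(0)}\ge\sum_{i=1}^k\lambda_i^{-2}$ via \cref{LemPropInterpolResidualPolynomials:Formula} and \subcref{LemPropInterpolResidualPolynomials:ZerosEigenvalues:noninterpol}, then feed it into \cref{LemRhoBound} and invert monotonicity. You spell out in more detail than the paper how the two truncations $\wedge\lambda_D^{-2}$ and $\wedge D$ are handled in Part~2, and how the final subadditivity step works, but the argument is the same.
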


\begin{proof}
Fix $z>0$. By \cref{LemPropInterpolResidualPolynomials:Formula} and \subcref{LemPropInterpolResidualPolynomials:ZerosEigenvalues:noninterpol}, we deduce
\begin{equation*}
\abs{r_k'(0)}=\sum_{i=1}^kx_{i,k}^{-1}\ge \sum_{i=1}^k\lambda_i^{-2}\ge\lambda_k^{-2}, \quad k=1,\dots,d.
\end{equation*}
Consider the event of \cref{LemRhoBound}, which holds with probability at least $1-e^{-z}$. For $k=0$, there is nothing to prove. Let $k=1,\dots,d$. In case $\abs{r_k'(0)} < \lambda_D^{-2}$, we then have, by the first implication of \cref{LemRhoBound}, the strict inequality
\begin{equation*}
\lambda_k^{-2} \le \abs{r_k'(0)} < \lambda\Big(\big(\floor{ \tfrac{2e}{e-1}\delta^{-2} S_{k,\lambda}+4(z+\log d)}+1\big) \wedge D\Big)^{-2}.
\end{equation*}
By the monotonicity of $i \mapsto \lambda(i)^{-2}$, we obtain the first claim. In case $\abs{r_k'(0)} \ge \lambda_D^{-2}$, noting that $k \le D$, the asserted bound is directly given by the second implication of \cref{LemRhoBound}.

Assuming additionally \eqref{AssPSD} with $p>0$, we have
$\abs{r_k'(0)} \ge \sum_{i=1}^k \lambda_i^{-2} \gtrsim_{p,C_A} k^{2p+1}$, $k=1,\dots,d$,
such that we can conclude the second claim again by the first implication of \cref{LemRhoBound} on the same event.
\end{proof}

We obtain the following high probability result for the inner product at general random `times' $\rv$. Its second part will be used for controlling the reconstruction error.

\begin{theorem} \label{ThmCrossTermBoundHighProb}
Consider the Gaussian noise model \eqref{AssGN}. Let $\rv$ be any random variable taking values in $[0,d]$ (possibly depending on $\xi$). Without any assumption on $(\lambda_i)$, we have for all $z \ge 1$ with probability at least $1-2e^{-z}$
\begin{equation}
\abs{\scapro{\xi}{\mathsf{r_{\rv,<}}g}} \le C \delta \norm{\mathsf{r_{\rv,<}}g} \Big(\sqrt{\delta^{-2}S_{\rv,\lambda}}+\sqrt{\log d}+\sqrt{z}\Big) \label{EqCrossTermBoundHighProb}
\end{equation}
with a numerical constant $C>0$. Assuming \eqref{AssPSD} with $p>0$ and $\abs{r_{\rv}'(0)}<\lambda_D^{-2}$ in addition, we have on the same event
\begin{equation}
\abs{\scapro{\xi}{\mathsf{r_{\rv,<}}g}} \le C_{p,c_A,C_A} \delta \norm{\mathsf{r_{\rv,<}}g} \big( (\delta^{-2}S_{\rv,\lambda})^{p/(2p+1)} + \sqrt{\log d} + \sqrt{z} \big) \label{EqCrossTermBoundHighProbPSD}
\end{equation}
with a constant $C_{p,c_A,C_A}>0$ depending only on $p$, $c_A$ and $C_A$.
\end{theorem}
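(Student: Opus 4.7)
The key difficulty is that $\mathsf{r_{\rv,<}}g$ depends on $\xi$ in a highly nonlinear way, so a direct Cauchy--Schwarz bound as in \eqref{EqCrossTermCS} would lose a factor of $\sqrt D$. My plan is to observe that, despite this nonlinearity, for any realisation of $\xi$ the vector $\mathsf{r_{\rv,<}}g$ lies in a deterministic low-dimensional subspace indexed by two \emph{structural parameters} of $r_\rv$, and then to combine a Gaussian $\chi^2$-concentration for the projection of $\xi$ onto this subspace with a union bound over the resulting finite family.

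Concretely, set $k(\rv)\coloneqq\lceil\rv\rceil$ and $J(\rv)\coloneqq\{i : \lambda_i^2<x_{1,\rv}\}$. Since $r_\rv\in\Pol_{k(\rv),1}$ by \cref{LemPropInterpolResidualPolynomials:Formula},
$$
\mathsf{r_{\rv,<}}g\in U_{k(\rv),J(\rv)},\qquad U_{k,J}\coloneqq\big\{\big(p(\lambda_i^2)g_i\mathbf{1}_J(i)\big)_{i=1}^D : p\in\Pol_k\big\},
$$
and $\dim U_{k,J}\le k+1$. As $x$ varies in $\R_+$, the set $\{i : \lambda_i^2<x\}$ takes at most $d+1$ distinct values, so the family $\{U_{k,J}\}_{k\in\{1,\dots,d\},J}$ has cardinality at most $d(d+1)\le 2d^2$. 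For each such deterministic subspace $U$ of dimension $m$, the orthogonal projection of $\xi\sim\mathcal N(0,\delta^2\id_D)$ onto $U$ has squared norm distributed as $\delta^2\chi_m^2$, and Laurent--Massart's one-sided tail yields that this norm is at most $\delta(\sqrt m+\sqrt{2z'})$ with probability $\ge 1-e^{-z'}$. A union bound with $z'=z+\log(2d^2)$ produces an event $\Omega_1$ of probability $\ge 1-e^{-z}$ on which, by Cauchy--Schwarz inside $U_{k(\rv),J(\rv)}$,
$$
\abs{\scapro\xi{\mathsf{r_{\rv,<}}g}}\le\delta\big(\sqrt{\lceil\rv\rceil+1}+\sqrt{2(z+\log(2d^2))}\big)\norm{\mathsf{r_{\rv,<}}g}.
$$

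It remains to convert $\sqrt{\lceil\rv\rceil}$ into the quantities appearing in \eqref{EqCrossTermBoundHighProb} and \eqref{EqCrossTermBoundHighProbPSD}. Zero-interlacing (\cref{LemPropInterpolResidualPolynomials:simpleZeros}) together with \cref{LemPropInterpolResidualPolynomials:ZerosEigenvalues:noninterpol} gives $\abs{r_\rv'(0)}\ge\sum_{i=1}^{\lceil\rv\rceil-1}\lambda_i^{-2}\ge\lambda_{\lceil\rv\rceil-1}^{-2}$. \cref{LemRhoBound} applied to $t=\rv$ produces a further event $\Omega_2$ of probability $\ge 1-e^{-z}$ on which $\abs{r_\rv'(0)}\wedge\lambda_D^{-2}\le\lambda(N_\rv)^{-2}$ with $N_\rv\lesssim\delta^{-2}S_{\rv,\lambda}+\log d+z$. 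Since $\lambda_{\lceil\rv\rceil-1}^{-2}\le\lambda_D^{-2}$ automatically, combining both inequalities gives $\lceil\rv\rceil\le N_\rv+1$ and thus $\sqrt{\lceil\rv\rceil}\lesssim\sqrt{\delta^{-2}S_{\rv,\lambda}}+\sqrt{\log d}+\sqrt z$; this proves \eqref{EqCrossTermBoundHighProb} on $\Omega_1\cap\Omega_2$, which has probability $\ge 1-2e^{-z}$, and crucially uses no assumption on $(\lambda_i)$. Under the additional \eqref{AssPSD} with $p>0$ and $\abs{r_\rv'(0)}<\lambda_D^{-2}$, the lower bound sharpens to $\abs{r_\rv'(0)}\gtrsim_{p,C_A}\lceil\rv\rceil^{2p+1}$ and \cref{LemRhoBound} upgrades to $\abs{r_\rv'(0)}\le\lambda(N_\rv)^{-2}\lesssim_{c_A}N_\rv^{2p}$, yielding $\lceil\rv\rceil\lesssim_{p,c_A,C_A}N_\rv^{2p/(2p+1)}$ and finally $\sqrt{\lceil\rv\rceil}\lesssim(\delta^{-2}S_{\rv,\lambda})^{p/(2p+1)}+\sqrt{\log d}+\sqrt z$ (using $p/(2p+1)\le 1/2$ and $z\ge 1$, with the trivial case $d\le 2$ absorbed in constants), which gives \eqref{EqCrossTermBoundHighProbPSD}.

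The main conceptual obstacle is the first step, i.e.\ identifying the deterministic low-dimensional enveloping subspace $U_{k,J}$ that simultaneously respects the polynomial degree of $r_\rv$ and the hard cutoff at the smallest zero $x_{1,\rv}$, and controlling the combinatorial complexity of the resulting family by $\lesssim d^2$. Once this self-normalising device is in place, the remaining ingredients---$\chi^2$-concentration, a logarithmic union bound, and the already established \cref{LemRhoBound}---are essentially routine.
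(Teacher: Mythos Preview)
Your proposal is correct and follows essentially the same route as the paper: identify the deterministic $(k+1)$-dimensional subspace containing $\mathsf{r_{\rv,<}}g$ (indexed by degree and cutoff, giving $\le d(d+1)$ choices), apply Gaussian concentration with a union bound to control the projected noise, and then bound $\lceil\rv\rceil$ via \cref{LemRhoBound}. The only cosmetic differences are that the paper uses Lipschitz Gaussian concentration \cite[Theorem~3.4]{Mas2007} rather than the Laurent--Massart $\chi^2$ bound, and it invokes the packaged \cref{CorIterStlambda} at integer $\lfloor\rv\rfloor$ (with monotonicity of $S_{t,\lambda}$) instead of reproving it inline for noninteger $\rv$.
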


\begin{proof}
Fix $z \ge 1$.  Let us consider the space
\begin{equation*}
Q_{g,k,i} \coloneqq \bigg\{ \sum_{j=i+1}^d p_k(\tilde{\lambda}_j^2) \sum_{l=1,\dots,D: \, \lambda_l=\tilde{\lambda}_j} \scapro{g}{u_l} u_l \,\bigg|\, p_k \in \Pol_k \bigg\}\subset \R^D
\end{equation*}
for $k,i=0,\dots,d$. Note that each $Q_{g,k,i}$ is an at most $(k+1)$-dimensional subspace, since $\Pol_k$ has dimension $k+1$. Recall that we set $x_{1,0} \coloneqq \infty$ and define
\begin{equation*}
i_{\rv} \coloneqq \sup\{i=1,\dots,d\,|\, \tilde{\lambda}_i^2\ge x_{1,\rv}\} \vee 0.
\end{equation*}
The definition of $r_{\rv,<}$ implies
\begin{equation*}
\mathsf{r_{\rv,<}}g
= \sum_{j=i_{\rv}+1}^d r_{\rv}(\tilde{\lambda}_j^2) \sum_{l=1,\dots,D: \, \lambda_l=\tilde{\lambda}_j} \scapro{g}{u_l}u_l \in Q_{g, \ceil{\rv}, i_{\rv}}.
\end{equation*}
Denote by $\Pi_{g,k,i}$ the orthogonal projection of $\R^D$ onto $Q_{g,k,i}$. By writing
\begin{equation}
\abs{\scapro{\xi}{\mathsf{r_{\rv,<}}g}} = \norm{\mathsf{r_{\rv,<}}g} \frac{\abs{\scapro{\xi}{\mathsf{r_{\rv,<}}g}}}{\norm{\mathsf{r_{\rv,<}}g}} \le \norm{\mathsf{r_{\rv,<}}g} \norm{\Pi_{g, \ceil{\rv}, i_{\rv}} \xi} \label{EqDecompCrossTerm}
\end{equation}
for $\mathsf{r_{\rv,<}}g \neq 0$, we can reduce the problem to bounding the norm of the projection $\norm{\Pi_{g, \ceil{\rv}, i_{\rv}} \xi} = \delta \norm{\Pi_{g, \ceil{\rv}, i_{\rv}} Z}$. If $\mathsf{r_{\rv,<}}g = 0$, the bound \eqref{EqDecompCrossTerm} on $\abs{\scapro{\xi}{\mathsf{r_{\rv,<}}g}}=0$ continues to hold. In case $\dim(Q_{g,k,i})>0$ for deterministic $k$ and $i$, note that $\E[\norm{\Pi_{g,k,i}Z}] \le \E[\norm{\Pi_{g,k,i}Z}^2]^{1/2} \le \sqrt{k+1}$ because $\norm{\Pi_{g,k,i}Z}^2$ is $\chi^2$-distributed with $\dim(Q_{g,k,i})$ degrees of freedom. Otherwise, $\norm{\Pi_{g,k,i}Z}^2=0$. Moreover, the function $x \mapsto \norm{\Pi_{g,k,i}x}$, $x \in \R^D$, is $1$-Lipschitz such that we obtain by Gaussian concentration \cite[Theorem~3.4]{Mas2007} and Bonferroni's inequality
\begin{align*}
&\PP \Big( \max_{0 \le k,i \le d} \big( \norm{\Pi_{g,k,i} \xi} - \delta \sqrt{k+1} \big) \ge \delta\big(2z+4\log (d+1)\big)^{1/2} \Big) \\
&\le \sum_{k,i=0}^d \PP \Big( \norm{\Pi_{g,k,i}Z}-\E[\norm{\Pi_{g,k,i}Z}] \ge \big(2z+4\log (d+1)\big)^{1/2} \Big) \\
&\le (d+1)^2 e^{-(2z+4\log (d+1))/2}= e^{-z}.
\end{align*}
Hence, with probability at least $1-e^{-z}$
\begin{align}
\norm{\Pi_{g,\ceil{\rv},i_{\rv}}\xi} &\le \delta \Big( \sqrt{\ceil{\rv}+1} + \big(2z+4\log (d+1)\big)^{1/2} \Big) \notag \\
&\lesssim \delta\big(\sqrt{\rv} + \sqrt{\log d} + \sqrt{z}\big) \label{EqUpperBoundNormProjection}
\end{align}
holds. Without any assumption on $(\lambda_i)$, we have by \cref{CorIterStlambda}
\begin{equation}
\rv \le \floor{\rv}+1 \lesssim \delta^{-2}S_{\rv,\lambda} + \log d + z \label{EqUpperBoundTau}
\end{equation}
also with probability at least $1-e^{-z}$, where we used that $t \mapsto S_{t,\lambda}$ is nondecreasing by \cref{LemPropErrorTerms:StochErrorNonDec}. Combining \eqref{EqDecompCrossTerm}--\eqref{EqUpperBoundTau} yields the first claim. For the second assertion, we use $\abs{r_{\floor{\rv}}'(0)} \le \abs{r_{\rv}'(0)} < \lambda_D^{-2}$ by assumption and obtain by \cref{CorIterStlambda} under \eqref{AssPSD}
\begin{equation}
\rv \le \floor{\rv}+1 \lesssim_{p,c_A,C_A} (\delta^{-2}S_{\rv,\lambda})^{2p/(2p+1)} +\log d+z \label{EqUpperBoundTauPolDecay}
\end{equation}
on the same event where \eqref{EqUpperBoundTau} holds. By \eqref{EqDecompCrossTerm}, \eqref{EqUpperBoundNormProjection} and \eqref{EqUpperBoundTauPolDecay}, the claim is deduced.
\end{proof}

We conclude the required bound in expectation for the  cross term.

\begin{proposition} \label{PropCrossTerm}
For the Gaussian noise model \eqref{AssGN}, without any assumption on $(\lambda_i)$, the bound
\begin{equation*}
\E[\abs{\scapro{\xi}{\mathsf{r_{\baloracle,<}}g}}] \le C \delta^2\big(\E[\delta^{-2}S_{\baloracle,\lambda}]+ \E[\delta^{-2}S_{\baloracle,\lambda}]^{1/2} \sqrt{\log d}\big)
\end{equation*}
holds true with a numerical constant $C>0$.
\end{proposition}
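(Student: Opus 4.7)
The plan is to mimic the proof of \cref{ThmCrossTermBoundHighProb} at the random time $\baloracle$, exploiting the balanced-oracle identity $A_{\baloracle,\lambda}=S_{\baloracle,\lambda}$ to convert the high-probability bound into the claimed expectation bound without paying the naive $\sqrt{D}$ price of a Cauchy--Schwarz against $\norm{\xi}$.

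Recalling from the proof of \cref{ThmCrossTermBoundHighProb} the subspaces $Q_{g,k,i}\subset\R^D$ of dimension at most $k+1$ with $\mathsf{r_{\baloracle,<}}g\in Q_{g,\ceil{\baloracle},i_{\baloracle}}$, I would first write
\begin{equation*}
\abs{\scapro{\xi}{\mathsf{r_{\baloracle,<}}g}} \le \norm{\mathsf{r_{\baloracle,<}}g}\cdot\norm{\Pi_{g,\ceil{\baloracle},i_{\baloracle}}\xi}.
\end{equation*}
The first factor is controlled by \cref{Lemrkg} combined with $A_{\baloracle,\lambda}=S_{\baloracle,\lambda}$, which gives $\norm{\mathsf{r_{\baloracle,<}}g}^2\le 8 S_{\baloracle,\lambda}$. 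For the second factor, I would reuse the Gaussian maximum argument from \cref{ThmCrossTermBoundHighProb}: since $x\mapsto\norm{\Pi_{g,k,i}x}$ is $1$-Lipschitz with mean at most $\sqrt{k+1}$, Gaussian concentration together with a union bound over the $O(d^2)$ indices shows that
\begin{equation*}
W \coloneqq \max_{1\le k\le d,\,0\le i\le d}\big(\delta^{-1}\norm{\Pi_{g,k,i}\xi}-\sqrt{k+1}\big)_+
\end{equation*}
satisfies $\E[W^2]\lesssim \log d+1$, so that $\norm{\Pi_{g,\ceil{\baloracle},i_{\baloracle}}\xi}^2\le 2\delta^2(\ceil{\baloracle}+1)+2\delta^2 W^2$.

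The remaining ingredient is $\E[\baloracle]\lesssim \E[\delta^{-2}S_{\baloracle,\lambda}]+\log d+1$, obtained by applying \cref{CorIterStlambda} at $k=\floor{\baloracle}$ (using the monotonicity $S_{\floor{\baloracle},\lambda}\le S_{\baloracle,\lambda}$) and integrating the resulting exponential tail, with the case $\baloracle<1$ handled trivially. A Cauchy--Schwarz step
\begin{equation*}
\E[\abs{\scapro{\xi}{\mathsf{r_{\baloracle,<}}g}}]\le \E[\norm{\mathsf{r_{\baloracle,<}}g}^2]^{1/2}\,\E[\norm{\Pi_{g,\ceil{\baloracle},i_{\baloracle}}\xi}^2]^{1/2}
\end{equation*}
and the elementary inequality $\sqrt{a(a+b)}\le a+\sqrt{ab}$, applied with $a=\E[\delta^{-2}S_{\baloracle,\lambda}]$ and $b\lesssim\log d+1$, then deliver the claimed bound. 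The main obstacle is that $\baloracle$ and $i_{\baloracle}$ depend nonlinearly on $\xi$, so $\Pi_{g,\ceil{\baloracle},i_{\baloracle}}\xi$ is not a standard Gaussian projection; passing through the self-normalised Gaussian maximum $W$ is precisely what keeps the price down to $\sqrt{\log d}$ rather than the prohibitive $\sqrt{D}$ that a naive Cauchy--Schwarz against $\norm{\xi}$ would have incurred.
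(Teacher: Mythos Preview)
Your proposal is correct and follows essentially the same approach as the paper. The paper packages the projection bound and the control of $\baloracle$ into a single appeal to \cref{ThmCrossTermBoundHighProb} (with $\rv=\baloracle$) and then integrates the resulting tail via \cref{LemHighProbExpBound}, whereas you unpack these steps explicitly by introducing the maximum $W$ and bounding $\E[\ceil{\baloracle}]$ separately through \cref{CorIterStlambda}; the underlying ingredients (\cref{Lemrkg} with $A_{\baloracle,\lambda}=S_{\baloracle,\lambda}$, Gaussian concentration plus union bound on the projections, and the final Cauchy--Schwarz) are identical.
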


\begin{proof}
From \cref{Lemrkg} and the identity $A_{\baloracle,\lambda}=S_{\baloracle,\lambda}$, we deduce
\begin{equation*}
\norm{\mathsf{r_{\baloracle,<}}g}^2\le 8S_{\baloracle,\lambda}.
\end{equation*}
We obtain by \eqref{EqCrossTermBoundHighProb} and \cref{LemHighProbExpBound}
\begin{equation*}
\E[\abs{\scapro{\xi}{\mathsf{r_{\baloracle,<}}g}}] \lesssim \delta \E \bigg[ \norm{\mathsf{r_{\baloracle,<}}g} \Big(\sqrt{\delta^{-2}S_{\baloracle,\lambda}} + \sqrt{\log d} \Big) \bigg].
\end{equation*}
By the Cauchy--Schwarz inequality, we conclude
\begin{align*}
\E[\abs{\scapro{\xi}{\mathsf{r_{\baloracle,<}}g}}] &\lesssim \delta\E\big[\norm{\mathsf{r_{\baloracle,<}}g}^2\big]^{1/2} \E [\delta^{-2}S_{\baloracle,\lambda} + \log d]^{1/2} \\
&\lesssim \delta^2\big(\E[\delta^{-2}S_{\baloracle,\lambda}]+ \E[\delta^{-2}S_{\baloracle,\lambda}]^{1/2} \sqrt{\log d}\big). \qedhere
\end{align*}
\end{proof}

Finally, we can derive the oracle bound for the prediction error.

\begin{theorem} \label{ThmWeakOracleIneqEarlyStop}
For the Gaussian noise model \eqref{AssGN}, without any assumption on $(\lambda_i)$, the prediction error at the stopping rule $\tau$ satisfies the balanced oracle inequality
\begin{equation*}
\E\big[\norm{A(\widehat f_\tau-f)}^2\big]\le C \big(\E[S_{\baloracle,\lambda}]+\delta^2\sqrt D + \abs{\kappa - \delta^2 D}\big)
\end{equation*}
with a numerical constant $C>0$.
\end{theorem}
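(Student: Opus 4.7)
The plan is to use the standard split
\[
\norm{A(\widehat f_\tau - f)}^2 \le 2\norm{A(\widehat f_\tau - \widehat f_{\baloracle})}^2 + 2\norm{A(\widehat f_{\baloracle} - f)}^2,
\]
take expectations, and bound each term with a result already established. For the second term, equation \eqref{EqRateStochError} in the proof of \cref{ThmWeakMinimax} gave the dimension-free bound $\E[\norm{A(\widehat f_{\baloracle}-f)}^2]\le 4\E[S_{\baloracle,\lambda}]$, which is exactly of the desired order. For the first term, \cref{PropBoundDistanceEarlyStopWeakBal} already reduces the gap between $\widehat f_\tau$ and $\widehat f_{\baloracle}$ to the cross-term $\E[\abs{\scapro{\xi}{\mathsf{r_{\baloracle,<}}g}}]$ plus the clean remainders $\delta^2\sqrt{2D}$ and $\abs{\kappa-\delta^2 D}$.

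The real work, which is the cross-term, has been delegated to \cref{PropCrossTerm}. It gives
\[
\E[\abs{\scapro{\xi}{\mathsf{r_{\baloracle,<}}g}}] \lesssim \E[S_{\baloracle,\lambda}] + \delta \E[S_{\baloracle,\lambda}]^{1/2}\sqrt{\log d}.
\]
Hence the only remaining step is to absorb the mixed term $\delta\E[S_{\baloracle,\lambda}]^{1/2}\sqrt{\log d}$ into the two desired contributions. This is a one-line Young inequality:
\[
\delta \E[S_{\baloracle,\lambda}]^{1/2}\sqrt{\log d} \le \tfrac{1}{2}\E[S_{\baloracle,\lambda}] + \tfrac{1}{2}\delta^2 \log d.
\]
Finally, since $d\le D$ and $\log x \le \sqrt{x}$ for all $x\ge 1$, we have $\log d \le \sqrt{D}$, so $\delta^2 \log d \le \delta^2\sqrt D$, which is the remainder allowed in the theorem.

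Combining these ingredients,
\[
\E\big[\norm{A(\widehat f_\tau-f)}^2\big]
\le 4\E[\abs{\scapro{\xi}{\mathsf{r_{\baloracle,<}}g}}] + 2\delta^2\sqrt{2D} + 2\abs{\kappa-\delta^2 D} + 8\E[S_{\baloracle,\lambda}]
\lesssim \E[S_{\baloracle,\lambda}] + \delta^2\sqrt D + \abs{\kappa-\delta^2 D}.
\]
There is no genuinely hard step left in the present statement — the probabilistic content has been packaged into \cref{ThmCrossTermBoundHighProb}/\cref{PropCrossTerm}, and the deterministic relation between $\tau$ and $\baloracle$ into \cref{LemResNormMaxts,LemTauwRes}. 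The only point requiring a moment of care is verifying that $\log d$ is dominated by $\sqrt{D}$ so that the logarithmic overhead produced by the self-normalised concentration argument does not exceed the Gaussian noise fluctuation term $\delta^2\sqrt{D}$ already forced by the estimation of $\norm{\xi}^2$.
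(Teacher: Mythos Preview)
Your proof is correct and follows the same route as the paper's own argument: split via the triangle inequality, invoke \cref{PropBoundDistanceEarlyStopWeakBal} and \cref{PropCrossTerm} for the first piece, and use \cref{PropWeakLoss} with $A_{\baloracle,\lambda}=S_{\baloracle,\lambda}$ for the second. You are in fact more explicit than the paper about how the mixed term $\delta\E[S_{\baloracle,\lambda}]^{1/2}\sqrt{\log d}$ is absorbed, via Young's inequality and $\log d\le\log D\le\sqrt{D}$; the paper simply states that \cref{PropBoundDistanceEarlyStopWeakBal,PropCrossTerm} combine to the desired bound. One cosmetic point: the reference to \eqref{EqRateStochError} lands inside a block written under the \eqrefAssPSDupper{} assumption, although the inequality itself only needs \cref{PropWeakLoss} and $A_{\baloracle,\lambda}=S_{\baloracle,\lambda}$ and is thus assumption-free; citing \cref{PropWeakLoss} directly, as the paper does, is cleaner.
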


\begin{proof}
\cref{PropBoundDistanceEarlyStopWeakBal,PropCrossTerm} give
\begin{equation*}
\E \big[ \norm{A(\widehat f_\tau-\widehat{f}_{\baloracle})}^2 \big] \lesssim \E[S_{\baloracle,\lambda}]+\delta^2 \sqrt{D}+ \abs{\kappa - \delta^2 D}.
\end{equation*}
By \cref{PropWeakLoss} and the identity $A_{\baloracle,\lambda} = S_{\baloracle,\lambda}$, we have
\begin{equation*}
\E \big[ \norm{A(\widehat f_{\baloracle}-f)}^2 \big] \le 4 \E[S_{\baloracle,\lambda}].
\end{equation*}
Combining both upper bounds with the inequality
\begin{equation*}
\E \big[ \norm{A(\widehat f_{\tau}-f)}^2 \big] \le 2\E \big[ \norm{A(\widehat f_\tau-\widehat{f}_{\baloracle})}^2 \big] + 2 \E \big[ \norm{A(\widehat f_{\baloracle}-f)}^2 \big]
\end{equation*}
yields the claim.
\end{proof}

If $\abs{\kappa - \delta^2 D}$ is at most of the order of $\delta^2 \sqrt{D}$, the balanced oracle inequality immediately implies minimax adaptivity of the early stopping rule for the prediction error over Sobolev-type ellipsoids for a range of regularity parameters $\mu$.

\begin{corollary} \label{CorEarlyStopWeakMinimax}
Assume \eqrefAssPSDupper{} with $p>1/2$, \eqref{AssGN} and $\abs{\kappa-\delta^2 D} \le C_{\kappa}\delta^2 \sqrt{D}$ for some $C_{\kappa} \ge 0$. Under the source condition $\sourcecondf$, we have
\begin{equation*}
\E\big[\norm{A(\widehat f_\tau-f)}^2\big]\le C_{\mu,p,C_A,C_{\kappa}} \big(\predrate+\delta^2\sqrt D\big)
\end{equation*}
with $\predrate$ given in \eqref{EqDefPredRate} and a constant $C_{\mu,p,C_A,C_{\kappa}}>0$ depending only on $\mu$, $p$, $C_A$ and $C_{\kappa}$. Under \eqref{AssPSD}, this gives the  minimax optimal rate for all regularities ${\mu>0}$ such that $\sqrt{D} \lesssim (R^2 \delta^{-2})^{1/(4\mu p + 2p +1)} \lesssim_{\mu,p,c_A} D$.
\end{corollary}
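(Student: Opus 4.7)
The proof plan is almost entirely a matter of chaining together the oracle inequality of \cref{ThmWeakOracleIneqEarlyStop} with the rate bound of \cref{ThmWeakMinimax} and then checking that the remainder terms match the announced rate.

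First, I would apply \cref{ThmWeakOracleIneqEarlyStop}, which under \eqref{AssGN} and without any condition on the singular values gives
\begin{equation*}
\E\big[\norm{A(\widehat f_\tau - f)}^2\big] \le C\big(\E[S_{\baloracle,\lambda}] + \delta^2\sqrt{D} + \abs{\kappa - \delta^2 D}\big).
\end{equation*}
The assumption $\abs{\kappa - \delta^2 D} \le C_\kappa\delta^2\sqrt{D}$ folds the third term directly into the second at the cost of the constant $C_\kappa$. Next, under \eqrefAssPSDupper{} with $p>1/2$, \eqref{AssGN} and the source condition $\sourcecondf$, the second bound of \cref{ThmWeakMinimax} yields
\begin{equation*}
\E[S_{\baloracle,\lambda}] \le C_{\mu,p,C_A}\,\predrate.
\end{equation*}
Combining the three estimates produces the first claim with a constant depending only on $\mu$, $p$, $C_A$ and $C_\kappa$.

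For the minimax optimality claim, the upper bound is already in hand; I only need the remainder term $\delta^2\sqrt{D}$ not to exceed the rate $\predrate$. A direct computation shows that
\begin{equation*}
\delta^2\sqrt{D} \lesssim \predrate \quad \Longleftrightarrow \quad \sqrt{D} \lesssim \big(R^2\delta^{-2}\big)^{1/(4\mu p + 2p + 1)},
\end{equation*}
which is precisely the left half of the assumed growth condition on $D$. Matching this with \cref{PropWeakErrorLowerBound}, whose hypothesis on $D$ is the right half of the same condition, gives the minimax lower bound of the same order $\predrate$ (under the full \eqref{AssPSD}, as required there). Together, the upper bound on $\E[\norm{A(\widehat f_\tau-f)}^2]$ and the matching lower bound yield minimax optimality on the stated range of regularities.

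There is no genuine obstacle in this proof: everything is an assembly of previously proven results, with the only delicate point being the algebraic verification that the auxiliary term $\delta^2\sqrt{D}$ is dominated by $\predrate$ precisely under the stated growth condition on $D$. The slight asymmetry -- an upper bound on $\sqrt{D}$ coming from the remainder and a lower bound on $D$ coming from \cref{PropWeakErrorLowerBound} (and requiring the two-sided \eqref{AssPSD}) -- is exactly reflected in the two-sided condition on $D$ in the statement, so no further work is needed.
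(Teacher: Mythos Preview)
Your proposal is correct and follows essentially the same approach as the paper: combine \cref{ThmWeakOracleIneqEarlyStop} with the bound on $\E[S_{\baloracle,\lambda}]$ from \cref{ThmWeakMinimax}, absorb $\abs{\kappa-\delta^2 D}$ into $\delta^2\sqrt D$ via the assumption on $\kappa$, and then invoke \cref{PropWeakErrorLowerBound} together with the equivalence $\delta^2\sqrt D\lesssim\predrate\iff\sqrt D\lesssim(R^2\delta^{-2})^{1/(4\mu p+2p+1)}$ for the minimax claim.
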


\begin{proof}
Combining the bound on $\E[S_{\baloracle,\lambda}]$ from \cref{ThmWeakMinimax} with \cref{ThmWeakOracleIneqEarlyStop} yields the upper bound. If $\sqrt{D} \lesssim (R^2 \delta^{-2})^{1/(4\mu p + 2p +1)}$, then $\delta^2 \sqrt{D} \lesssim \predrate$, and we obtain minimax adaptivity for the given range of regularity parameters $\mu$ by \cref{PropWeakErrorLowerBound}.
\end{proof}

\begin{proposition}
Under \eqref{AssDN}, the discrepancy principle $\tau$ with $\kappa \coloneqq c^2 \delta^2$, $c\ge 0$, satisfies
\begin{equation*}
\norm{A(\widehat{f}_{\tau}-f)}^2 \le (c+1)^2 \delta^2.
\end{equation*}
\end{proposition}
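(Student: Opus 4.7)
The plan is to split the prediction error by a single triangle inequality, reducing everything to a residual bound plus the noise level. Starting from the identities $A(\widehat{f}_\tau - f) = \widehat{g}_\tau - g = -r_\tau(AA^\top)Y + \xi$ and $\norm{r_\tau(AA^\top)Y} = R_\tau$, I would write
\[
\norm{A(\widehat{f}_\tau - f)} \le R_\tau + \norm{\xi}.
\]
Under \eqref{AssDN}, the second term is bounded by $\delta$, so the entire task reduces to showing $R_\tau \le c\delta$.

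To establish that, I would invoke the continuity of $t \mapsto R_t^2$, which is built into the interpolated construction of $r_t$ in \cref{DefInterpolEstimators}: on each subinterval $[k,k+1]$ one has the explicit formula
\[
R_t^2 = (1-\alpha)^2 R_k^2 + \big(1-(1-\alpha)^2\big) R_{k+1}^2, \qquad t = k+\alpha, \ \alpha \in [0,1],
\]
which is continuous in $\alpha$. Two subcases then arise. If $\tau = 0$, the definition of the stopping rule gives directly $R_0^2 \le \kappa = c^2\delta^2$. If $\tau > 0$, existence of $\tau \in (0,d]$ is guaranteed by $R_d = 0$, itself a consequence of $r_d$ vanishing on all squared nonzero singular values (\cref{LemPropInterpolResidualPolynomials:simpleZeros:noninterpol}) together with \eqref{AssY}; continuity of $R_t^2$ combined with the minimality of $\tau$ then forces $R_\tau^2 = \kappa = c^2\delta^2$. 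In either subcase, $R_\tau \le c\delta$.

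Combining the two estimates yields $\norm{A(\widehat{f}_\tau - f)} \le (c+1)\delta$, and squaring produces the claim. There is no real obstacle in this proof: the argument is purely deterministic, requires no source condition, and the only subtlety is the continuity of $t \mapsto R_t^2$, which is an immediate consequence of the interpolation framework of \cref{SecInterpolConjGradientMethod}. The resulting estimate is exactly the classical discrepancy-principle bound, recovered here as a transparent byproduct.
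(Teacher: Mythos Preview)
Your proof is correct and follows essentially the same approach as the paper: both use the triangle inequality on $A(\widehat f_\tau-f)=-r_\tau(AA^\top)Y+\xi$ to bound the prediction error by $R_\tau+\norm{\xi}\le c\delta+\delta$. The paper simply invokes ``the definition of $\tau$'' for $R_\tau\le c\delta$, while you spell out the continuity argument (already noted after \cref{LemTauwRes}) more explicitly.
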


\begin{proof}
The definition of $\tau$ and the assumption $\norm{\xi} \le \delta$ directly yield
\begin{equation*}
\norm{A(\widehat{f}_{\tau}-f)} = \norm{A\widehat{f}_{\tau}-Y+\xi} \le R_{\tau} + \norm{\xi} \le (c+1)\delta. \qedhere
\end{equation*}
\end{proof}


\section{Transfer to the reconstruction error} \label{SecTransferReconstructionError}

\subsection{Bound for the reconstruction error}

The main tool to control the reconstruction error is the following relatively rough bound in terms of an approximation error for $f^\dagger$ and the quantities already studied for the prediction error.

\begin{proposition} \label{PropRoughBoundStrongError}
For $t \in [0,d]$, we have
\begin{equation*}
\norm{\widehat{f}_t-f^\dagger}^2 \le 2 \norm{r_{t,<}(A^\top A)f^\dagger}^2 + 4\abs{r_t'(0)}S_{t,\lambda} + 2\abs{r_t'(0)}A_{t,\lambda}.
\end{equation*}
\end{proposition}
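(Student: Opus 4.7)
I would work in the SVD basis. From $\widehat f_t - f^\dagger = A^\dagger(\widehat g_t - g) = A^\dagger(\xi - r_t(AA^\top)Y)$ and $A^\dagger u_i = \lambda_i^{-1} v_i$, the coordinate of $\widehat f_t - f^\dagger$ along $v_i$ is $\lambda_i^{-1}(\xi_i - r_t(\lambda_i^2)Y_i)$, which via $Y_i = g_i + \xi_i$ and $g_i = \lambda_i f_i$ also equals $-r_t(\lambda_i^2)f_i + \lambda_i^{-1}(1-r_t(\lambda_i^2))\xi_i$. Hence
\begin{equation*}
\norm{\widehat f_t - f^\dagger}^2 = \sum_{i=1}^D \lambda_i^{-2}(\xi_i - r_t(\lambda_i^2)Y_i)^2,
\end{equation*}
and I split this sum according to whether $\lambda_i^2 < x_{1,t}$ or $\lambda_i^2 > x_{1,t}$, aiming to recover the three summands on the right-hand side.

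For $\lambda_i^2 < x_{1,t}$ I would apply $(a-b)^2 \le 2a^2+2b^2$ to the second form of the $i$-th coordinate. The $r_t(\lambda_i^2) f_i$ contribution collects, over the low-frequency indices, to exactly $2\norm{r_{t,<}(A^\top A) f^\dagger}^2$. For the $\lambda_i^{-1}(1-r_t(\lambda_i^2))\xi_i$ contribution I invoke the tangent bound from \cref{LemrtBound}: the inequality $(1-\abs{r_t'(0)}x)_+ \le r_t(x)$ on $[0,x_{1,t}]$ yields $1 - r_t(\lambda_i^2) \le \abs{r_t'(0)}\lambda_i^2$, hence
\begin{equation*}
\lambda_i^{-2}(1-r_t(\lambda_i^2))^2 \le \abs{r_t'(0)}(1-r_{t,<}(\lambda_i^2)),
\end{equation*}
and summing over $\lambda_i^2 < x_{1,t}$ bounds this contribution by $2\abs{r_t'(0)}S_{t,\lambda}$.

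For $\lambda_i^2 > x_{1,t}$ the key observation is that, by \cref{LemPropInterpolResidualPolynomials:Formula}, $\abs{r_t'(0)} = \sum_{j=1}^{\ceil{t}} x_{j,t}^{-1} \ge x_{1,t}^{-1}$, so $\lambda_i^{-2} < x_{1,t}^{-1} \le \abs{r_t'(0)}$ uniformly on this range. Recognising $\xi_i - r_t(\lambda_i^2)Y_i$ as the $i$-th SVD coefficient of $\widehat g_t - g = A(\widehat f_t - f)$, I would then obtain
\begin{equation*}
\sum_{i:\, \lambda_i^2 > x_{1,t}} \lambda_i^{-2}(\xi_i - r_t(\lambda_i^2)Y_i)^2 \le \abs{r_t'(0)} \norm{A(\widehat f_t - f)}^2 \le 2\abs{r_t'(0)}(A_{t,\lambda} + S_{t,\lambda})
\end{equation*}
by \cref{PropWeakLoss}. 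Adding both contributions yields precisely the claimed bound. The step that is not purely routine is this high-frequency manoeuvre: one must spot that the restricted sum is a piece of the prediction error and that the intrinsic regularisation parameter $\abs{r_t'(0)}$ uniformly controls $\lambda_i^{-2}$ in the high-frequency range, thereby reducing a reconstruction-type term to a prediction-type quantity that has already been analysed.
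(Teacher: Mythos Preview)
Your proof is correct and follows essentially the same route as the paper: the paper also splits $\norm{\widehat f_t - f^\dagger}^2$ at $x_{1,t}$, applies $(a+b)^2\le 2a^2+2b^2$ together with the tangent bound from \cref{LemrtBound} on the low-frequency part, and bounds the high-frequency part by $x_{1,t}^{-1}\norm{A(\widehat f_t-f)}^2\le \abs{r_t'(0)}\cdot 2(A_{t,\lambda}+S_{t,\lambda})$ via \cref{PropWeakLoss}. The only difference is cosmetic: you compute in SVD coordinates while the paper writes the same estimates in operator notation.
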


\begin{proof}
For $t=0$, the claim is clear. Let $t>0$.
Since $1-r_t$ is a polynomial vanishing in zero, we have
\[A^\dagger (1-r_t)(AA^\top)A=A^\top \Big(\tfrac{1-r_t(x)}{x}\Big)(AA^\top)A=(1-r_t)(A^\top A)\]
and thus
\begin{equation*}
\widehat f_t-f^\dagger =A^\dagger (1-r_t)(AA^\top)(Af^\dagger+\xi)-f^\dagger
= -r_t(A^\top A)f^\dagger+A^\dagger (1-r_t)(AA^\top)\xi.
\end{equation*}
We conclude by direct calculation
\begin{align}
&\norm{\widehat{f}_t-f^\dagger}^2 \notag \\*
&= \norm{\widehat{f}_t-f^\dagger}_<^2 + \norm{\widehat{f}_t-f^\dagger}_{\ge}^2 \notag \\
&\le \norm{r_t(A^\top A)f^\dagger - A^\dagger (\id_D -r_t(AA^\top))\xi}^2_< + x_{1,t}^{-1} \norm{A(\widehat{f}_t-f^\dagger)}^2 \notag \\
&\le 2\norm{r_{t,<}(A^\top A)f^\dagger}^2 + 2\norm{A^\dagger (\id_D -r_t(AA^\top))\xi}_<^2 + \abs{r_t'(0)} \norm{A(\widehat{f}_t-f)}^2, \label{EqRoughBoundStrongError}
\end{align}
where we used for the last inequality that $\abs{r_t'(0)} \ge x_{1,t}^{-1}$. Since additionally $x^{-1}(1-r_t(x))_< \le \abs{r_t'(0)}$ by \cref{LemrtBound}, the second term satisfies
\begin{equation}
\norm{A^\dagger (\id_D -r_t(AA^\top))\xi}_<^2 = \norm{\mathsf{x^{-1/2}(1-r_t)_<}\xi}^2
\le \abs{r_t'(0)} \norm{\mathsf{(1-r_t)_<^{1/2}}\xi}^2 \le \abs{r_t'(0)} S_{t,\lambda}.
\end{equation}
\cref{PropWeakLoss} yields for the last term
\begin{equation*}
\norm{A(\widehat{f}_t-f)}^2\le 2(A_{t,\lambda}+ S_{t,\lambda}).
\end{equation*}
It remains to combine these inequalities.
\end{proof}

\begin{remark}
The approach to bound the reconstruction error as in \eqref{EqRoughBoundStrongError} is similar to that in the proof of \citet[Lemma~7.11]{EngHanNeu1996}. Yet, instead of splitting the norm at $x_{1,t}$, they choose a positive $\varepsilon \le \abs{r_t'(0)}^{-1}$ such that their upper bound is minimal. This choice is necessary for their approach since they use, under the source condition $\sourcecondf$, the rough bound
\begin{equation*}
\norm{(\mathsf{r_t(x)}\mathbf{1}(\mathsf{x} \le \varepsilon))(A^\top A) f} \le \sup_{x \in [0,\varepsilon]} \abs{r_t(x)x^\mu} R \le \varepsilon^{\mu} R.
\end{equation*}
Instead, we will bound the term in \eqref{EqInterpolIneqWeakErrors} below with respect to the error terms. For the second summand in \eqref{EqRoughBoundStrongError}, they consider the worst-case bound
\begin{equation*}
\norm{\mathbf{1}(\mathsf{x \le \varepsilon}) \mathsf{x^{-1/2}(1-r_t(x))}\xi}^2 \le \abs{r_t'(0)} \norm{\xi}^2,
\end{equation*}
which is suboptimal in the stochastic noise setting \eqref{AssGN}.
\end{remark}

Using an interpolation inequality similar to \cite[(2.49)]{EngHanNeu1996}, we find the following bound.

\begin{corollary} \label{CorRoughBoundStrongErrorSobolevCond}
Assume the source condition $\sourcecondf$. Then, for $t \in [0,d]$, we have
\begin{equation*}
\norm{\widehat{f}_{t}-f^\dagger}^2 \le 16R^{2/(2\mu+1)}(S_{t,\lambda}\vee A_{t,\lambda})^{2\mu/(2\mu+1)} + 6\abs{r_t'(0)}(S_{t,\lambda}\vee A_{t,\lambda}).
\end{equation*}
\end{corollary}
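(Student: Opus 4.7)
The plan is to reduce the bound to Proposition~\ref{PropRoughBoundStrongError} combined with a Hölder/interpolation inequality for $\norm{r_{t,<}(A^\top A)f^\dagger}^2$.

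First, I would rewrite $\norm{r_{t,<}(A^\top A)f^\dagger}^2$ in SVD coordinates. Since $f^\dagger$ lies in the row space of $A$, only the indices $i=1,\dots,D$ contribute, and the truncation $r_{t,<}$ restricts to $\{i:\lambda_i^2<x_{1,t}\}$, giving $\norm{r_{t,<}(A^\top A)f^\dagger}^2=\sum_{i:\lambda_i^2<x_{1,t}} r_t(\lambda_i^2)^2(f_i^\dagger)^2$. On $[0,x_{1,t}]$ the residual polynomial satisfies $0\le r_t(x)\le r_t(0)=1$ by \cref{LemPropInterpolResidualPolynomials:Properties}, which is crucial for matching Hölder exponents in the next step.

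Next, I would set up an interpolation between the ``weighted'' quantity $\sum_i r_t^2\lambda_i^2(f_i^\dagger)^2=\norm{\mathsf{r_{t,<}}g}^2$ (note $g_i=\lambda_i f_i^\dagger$) and the source condition $\sum_i\lambda_i^{-4\mu}(f_i^\dagger)^2\le R^2$. Writing $r_t^2=r_t^{4\mu/(2\mu+1)}\cdot r_t^{2/(2\mu+1)}$ and using $r_t\le 1$ to drop the second factor, one finds pointwise on $\{\lambda_i^2<x_{1,t}\}$
\begin{equation*}
r_t(\lambda_i^2)^2(f_i^\dagger)^2\le \bigl[r_t(\lambda_i^2)^2\lambda_i^2(f_i^\dagger)^2\bigr]^{2\mu/(2\mu+1)}\bigl[\lambda_i^{-4\mu}(f_i^\dagger)^2\bigr]^{1/(2\mu+1)},
\end{equation*}
so that Hölder's inequality with conjugate exponents $(2\mu+1)/(2\mu)$ and $2\mu+1$ gives
\begin{equation*}
\norm{r_{t,<}(A^\top A)f^\dagger}^2\le \norm{\mathsf{r_{t,<}}g}^{4\mu/(2\mu+1)}R^{2/(2\mu+1)}.
\end{equation*}

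Now I invoke \cref{Lemrkg}, which yields $\norm{\mathsf{r_{t,<}}g}^2\le 6S_{t,\lambda}+2A_{t,\lambda}\le 8(S_{t,\lambda}\vee A_{t,\lambda})$, so that
\begin{equation*}
\norm{r_{t,<}(A^\top A)f^\dagger}^2\le 8^{2\mu/(2\mu+1)}R^{2/(2\mu+1)}(S_{t,\lambda}\vee A_{t,\lambda})^{2\mu/(2\mu+1)}\le 8\,R^{2/(2\mu+1)}(S_{t,\lambda}\vee A_{t,\lambda})^{2\mu/(2\mu+1)}.
\end{equation*}
Plugging this into \cref{PropRoughBoundStrongError}, together with the trivial bound $4\abs{r_t'(0)}S_{t,\lambda}+2\abs{r_t'(0)}A_{t,\lambda}\le 6\abs{r_t'(0)}(S_{t,\lambda}\vee A_{t,\lambda})$, produces the two summands $16R^{2/(2\mu+1)}(S_{t,\lambda}\vee A_{t,\lambda})^{2\mu/(2\mu+1)}$ and $6\abs{r_t'(0)}(S_{t,\lambda}\vee A_{t,\lambda})$ exactly as stated; the case $t=0$ is covered since $r_0\equiv 1$ and both sides reduce to $\norm{f^\dagger}^2\le R^2\lambda_1^{4\mu}$ handled by the same interpolation in trivial form.

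The only nontrivial step is choosing the Hölder exponents so that the powers of $r_t$, $\lambda_i$ and $f_i^\dagger$ all balance; this forces the use of $r_t\le 1$ on $[0,x_{1,t}]$ to lower the exponent of $r_t$ from $2$ to $4\mu/(2\mu+1)$, and is the place where the structure of the interval $[0,x_{1,t}]$ (nonnegativity and the normalisation $r_t(0)=1$) from \cref{LemPropInterpolResidualPolynomials:Properties} enters decisively.
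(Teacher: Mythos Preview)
Your proposal is correct and follows essentially the same route as the paper: invoke \cref{PropRoughBoundStrongError}, bound $\norm{r_{t,<}(A^\top A)f^\dagger}^2$ via a H\"older/interpolation inequality using $0\le r_{t,<}\le 1$ and the source condition, then apply \cref{Lemrkg} to replace $\norm{\mathsf{r_{t,<}}g}^2$ by $8(S_{t,\lambda}\vee A_{t,\lambda})$. The only cosmetic difference is that the paper inserts the factor $\lambda_i^{4\mu/(2\mu+1)}\lambda_i^{-4\mu/(2\mu+1)}$ and applies H\"older directly, whereas you first split $r_t^2=r_t^{4\mu/(2\mu+1)}r_t^{2/(2\mu+1)}$ and use $r_t\le 1$ pointwise; both yield the identical bound $\norm{\mathsf{r_{t,<}}g}^{4\mu/(2\mu+1)}R^{2/(2\mu+1)}$. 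Your aside on $t=0$ is slightly misstated (the right-hand side is $16R^{2/(2\mu+1)}\norm{g}^{4\mu/(2\mu+1)}$, not $R^2\lambda_1^{4\mu}$), but the general argument already covers this case without a separate check.
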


\begin{proof}
By \cref{PropRoughBoundStrongError}, it remains to bound $\norm{r_{t,<}(A^\top A)f^\dagger}^2$. Note that $\scapro{f^\dagger}{v_i}=f_i$ for $i=1,\dots,D$ and $\scapro{f^\dagger}{v_i}=0$ for $i=D+1,\dots,P$ in case of $P>D$. Using the source condition $\sourcecondf$ and H\"older's inequality with the conjugate exponents $\tilde p \coloneqq (2\mu+1)/(2\mu)$ and $\tilde q \coloneqq 2\mu+1$, we obtain
\begin{align}
\norm{r_{t,<}(A^\top A)f^\dagger}^2 &= \sum_{i=1}^D r_{t,<}^2(\lambda_i^2)\lambda_i^{4\mu/(2\mu+1)}\lambda_i^{-4\mu/(2\mu+1)}f_i^2 \notag\\
&\le \Big(\sum_{i=1}^D r_{t,<}^2(\lambda_i^2)\lambda_i^{2}f_i^2\Big)^{2\mu/(2\mu+1)}\Big(\sum_{i=1}^D r_{t,<}^2(\lambda_i^2)\lambda_i^{-4\mu}f_i^2\Big)^{1/(2\mu+1)} \notag\\
&\le\norm{\mathsf{r_{t,<}}g}^{4\mu/(2\mu+1)}\Big(\sum_{i=1}^D \lambda_i^{-4\mu}f_i^2\Big)^{1/(2\mu+1)} \notag\\
&\le \norm{\mathsf{r_{t,<}}g}^{4\mu/(2\mu+1)}R^{2/(2\mu+1)}, \label{EqInterpolIneq}
\end{align}
where in the second last step $\abs{r_{t,<}(x)}\le 1$ was used. Invoking \cref{Lemrkg}, we thus have
\begin{equation}
2\norm{r_{t,<}(A^\top A)f^\dagger}^2\le 16R^{2/(2\mu+1)}(S_{t,\lambda}\vee A_{t,\lambda})^{2\mu/(2\mu+1)} \label{EqInterpolIneqWeakErrors}
\end{equation}
and conclude the claim.
\end{proof}

\begin{corollary} \label{CorBoundStrongErrorSourceCondDN}
Under \eqref{AssDN} and the source condition $\sourcecondf$, we have for $t \in (0,d]$
\begin{equation}
\norm{\widehat{f}_t-f^\dagger}^2 \le C_{\mu}\big( \abs{r_t'(0)}^{-2\mu}R^2 + \abs{r_t'(0)}\delta^2 \big) \label{EqPropBoundStrongErrorSourceCondDN}
\end{equation}
with a constant $C_{\mu}>0$ depending only on $\mu$.
\end{corollary}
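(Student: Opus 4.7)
The key tool is \cref{CorRoughBoundStrongErrorSobolevCond}, which already delivers the bound
\begin{equation*}
\norm{\widehat{f}_{t}-f^\dagger}^2 \le 16R^{2/(2\mu+1)}(S_{t,\lambda}\vee A_{t,\lambda})^{2\mu/(2\mu+1)} + 6\abs{r_t'(0)}(S_{t,\lambda}\vee A_{t,\lambda})
\end{equation*}
once we invoke the source condition, which is applicable here because $\sourcecondfdagger$ is equivalent to $\sourcecondg$ (the SVD entries $f_i$ of $f$ and $f^\dagger$ coincide and $g_i=\lambda_i f_i$, so both norms equal $(\sum_i\lambda_i^{-4\mu}f_i^2)^{1/2}$). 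Thus the plan reduces to controlling $S_{t,\lambda}$ and $A_{t,\lambda}$ and then simplifying the resulting expression under \eqref{AssDN}.

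First I would bound the stochastic error. By \cref{LemPropErrorTerms:BoundStochError}, $S_{t,\lambda}\le\norm{\xi}^2$, and \eqref{AssDN} gives $S_{t,\lambda}\le\delta^2$. Next, for the approximation error, \cref{CorUpperBoundApproxErrorSourceCond} together with \cref{LemPropErrorTerms:BoundApproxError} yields
\begin{equation*}
A_{t,\lambda}\le \norm{\mathsf{r_{t,<}^{1/2}}g}^2\le R^2(\mu+1/2)^{2\mu+1}\abs{r_t'(0)}^{-2\mu-1}.
\end{equation*}
Hence $S_{t,\lambda}\vee A_{t,\lambda}\le \delta^2 \vee C_\mu R^2\abs{r_t'(0)}^{-2\mu-1}$ for some $C_\mu$ depending only on $\mu$.

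For the second summand in \cref{CorRoughBoundStrongErrorSobolevCond}, plugging in $\delta^2$ gives $6\abs{r_t'(0)}\delta^2$, while plugging in $C_\mu R^2\abs{r_t'(0)}^{-2\mu-1}$ gives $6C_\mu R^2\abs{r_t'(0)}^{-2\mu}$; both are of the desired form. For the first summand, the case $A_{t,\lambda}\ge\delta^2$ is direct, yielding a $\mu$-dependent multiple of $R^2\abs{r_t'(0)}^{-2\mu}$. The case $\delta^2 \ge A_{t,\lambda}$ requires a short Young-type computation, which I expect to be the only mildly nontrivial step: by the weighted AM--GM inequality with weights $1/(2\mu+1)$ and $2\mu/(2\mu+1)$ applied to $a=R^2\abs{r_t'(0)}^{-2\mu}$ and $b=\abs{r_t'(0)}\delta^2$,
\begin{equation*}
R^{2/(2\mu+1)}\delta^{4\mu/(2\mu+1)}=a^{1/(2\mu+1)}b^{2\mu/(2\mu+1)}\le \tfrac{1}{2\mu+1}a+\tfrac{2\mu}{2\mu+1}b\le R^2\abs{r_t'(0)}^{-2\mu}+\abs{r_t'(0)}\delta^2.
\end{equation*}
Combining all four contributions and absorbing the numerical and $\mu$-dependent constants into a single $C_\mu>0$ gives \eqref{EqPropBoundStrongErrorSourceCondDN}. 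The only subtlety is distinguishing the two regimes for $S_{t,\lambda}\vee A_{t,\lambda}$, but both collapse into the same final bound after the Young inequality, so no genuine obstacle appears.
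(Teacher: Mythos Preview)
Your proof is correct. It differs from the paper's in one structural choice: you invoke \cref{CorRoughBoundStrongErrorSobolevCond}, which already packages the first term as $R^{2/(2\mu+1)}(S_{t,\lambda}\vee A_{t,\lambda})^{2\mu/(2\mu+1)}$ via \cref{Lemrkg}, and then you must undo this packaging by a case split and a Young/AM--GM inequality to recover $R^2\abs{r_t'(0)}^{-2\mu}+\abs{r_t'(0)}\delta^2$. The paper instead goes back one step to \cref{PropRoughBoundStrongError} and the interpolation inequality \eqref{EqInterpolIneq}, bounding $\norm{\mathsf{r_{t,<}}g}$ \emph{directly} by $\norm{\mathsf{r_{t,<}^{1/2}}g}\lesssim_\mu R\abs{r_t'(0)}^{-\mu-1/2}$ through \cref{CorUpperBoundApproxErrorSourceCond}. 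This yields $R^{2/(2\mu+1)}\norm{\mathsf{r_{t,<}}g}^{4\mu/(2\mu+1)}\lesssim_\mu R^2\abs{r_t'(0)}^{-2\mu}$ in one line, with no case distinction and no Young step. Both approaches are valid; the paper's is slightly more direct because it avoids the detour through $S_{t,\lambda}\vee A_{t,\lambda}$ that you then have to unwind.
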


\begin{proof}
Combining \cref{PropRoughBoundStrongError}, \eqref{EqInterpolIneq} and $S_{t,\lambda} \le \delta^2$ under \eqref{AssDN}, we obtain
\begin{equation*}
\norm{\widehat{f}_t-f^\dagger}^2 \lesssim \norm{\mathsf{r_{t,<}}g}^{4\mu/(2\mu+1)}R^{2/(2\mu+1)} + \abs{r_t'(0)} \delta^2 + \abs{r_t'(0)}A_{t,\lambda}.
\end{equation*}
Noting  $\norm{\mathsf{r_{t,<}}g} \le \norm{\mathsf{r_{t,<}^{1/2}}g}$ due to $\abs{r_{t,<}(x)} \le 1$, we conclude by \cref{CorUpperBoundApproxErrorSourceCond} and \eqref{EqUpperBoundApproxErrorSource}
\begin{align*}
\norm{\widehat{f}_t-f^\dagger}^2
&\lesssim_{\mu} \big(\abs{r_t'(0)}^{-2\mu-1}R^2\big)^{2\mu/(2\mu+1)}R^{2/(2\mu+1)} + \abs{r_t'(0)} \delta^2 + \abs{r_t'(0)}^{-2\mu}R^2 \\
&\lesssim_{\mu} \abs{r_t'(0)}^{-2\mu}R^2 + \abs{r_t'(0)}\delta^2. \qedhere
\end{align*}
\end{proof}

\begin{remark}
For small $t$, the first summand in the bound \eqref{EqPropBoundStrongErrorSourceCondDN} dominates and the iterates seem to converge until both summands are of the same order. From then on, the estimator diverges. This phenomenon is named \emph{semiconvergence} \cite[p.~45]{Han1995}. It corresponds to the tradeoff of approximation and stochastic error for the prediction error, see \cref{RemDerivativeRespolZero:tradeoff}.
\end{remark}

We are in a position to prove convergence rates for the reconstruction error under Gaussian noise.

\begin{theorem} \label{ThmStrongMinimax}
Assume \eqrefAssPSDupper{} with $p>1/2$ and \eqref{AssGN}. Under the source condition $\sourcecondf$, the reconstruction error at the balanced oracle satisfies
\begin{equation*}
\E\big[\norm{\widehat{f}_{\baloracle}-f^\dagger}^2\big] \le C_{\mu,p,C_A} \recrate
\end{equation*}
with $\recrate$ given in \eqref{EqDefRecRate} and a constant $C_{\mu,p,C_A}>0$ depending only on $\mu$, $p$ and $C_A$. In particular, under \eqref{AssPSD}, the balanced oracle is simultaneously minimax adaptive for prediction and reconstruction error for all $\mu>0$ with $(R^2 \delta^{-2})^{1/(4\mu p +2p+1)} \lesssim_{\mu,p,c_A} D$.
\end{theorem}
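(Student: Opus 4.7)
The plan is to combine the rough reconstruction-error bound from \cref{CorRoughBoundStrongErrorSobolevCond} with the balancing identity $A_{\baloracle,\lambda}=S_{\baloracle,\lambda}$ and the deterministic approximation-error bound from \cref{CorUpperBoundApproxErrorSourceCond}. Since $\sourcecondfdagger$ on $f^\dagger$ is equivalent to $\sourcecondg$ on $g=Af$, \cref{CorRoughBoundStrongErrorSobolevCond} evaluated at $t=\baloracle$ gives
\begin{equation*}
\norm{\widehat{f}_{\baloracle}-f^\dagger}^2 \le 16 R^{2/(2\mu+1)} S_{\baloracle,\lambda}^{2\mu/(2\mu+1)} + 6\abs{r_{\baloracle}'(0)} S_{\baloracle,\lambda}.
\end{equation*}

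The key step is to rewrite the second summand in terms of $S_{\baloracle,\lambda}$ alone. Using $A_{\baloracle,\lambda}=S_{\baloracle,\lambda}$ together with \cref{LemPropErrorTerms:BoundApproxError} and \cref{CorUpperBoundApproxErrorSourceCond} yields
\begin{equation*}
S_{\baloracle,\lambda} \le R^2(\mu+1/2)^{2\mu+1}\abs{r_{\baloracle}'(0)}^{-2\mu-1},
\end{equation*}
which rearranges to $\abs{r_{\baloracle}'(0)} \lesssim_{\mu} R^{2/(2\mu+1)} S_{\baloracle,\lambda}^{-1/(2\mu+1)}$ and hence
\begin{equation*}
\abs{r_{\baloracle}'(0)} S_{\baloracle,\lambda} \lesssim_{\mu} R^{2/(2\mu+1)} S_{\baloracle,\lambda}^{2\mu/(2\mu+1)}.
\end{equation*}
Both terms in the reconstruction-error bound are thus of the same order, giving $\norm{\widehat{f}_{\baloracle}-f^\dagger}^2 \lesssim_{\mu} R^{2/(2\mu+1)} S_{\baloracle,\lambda}^{2\mu/(2\mu+1)}$. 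Taking expectations, applying Jensen's inequality to the concave function $x\mapsto x^{2\mu/(2\mu+1)}$ and invoking \cref{ThmWeakMinimax}, which gives $\E[S_{\baloracle,\lambda}] \le C_{\mu,p,C_A}\predrate$ under \eqrefAssPSDupper{} with $p>1/2$, I obtain
\begin{equation*}
\E\big[\norm{\widehat{f}_{\baloracle}-f^\dagger}^2\big] \lesssim_{\mu,p,C_A} R^{2/(2\mu+1)} \predrate^{\,2\mu/(2\mu+1)}.
\end{equation*}
A routine computation of the exponents reduces the right-hand side to $\recrate = R^{(4p+2)/(4\mu p+2p+1)}\delta^{8\mu p/(4\mu p+2p+1)}$, proving the upper bound.

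For the minimax adaptivity statement one needs the matching lower bound, which can be obtained by the same Gaussian two-point reduction as in \cref{PropWeakErrorLowerBound}, only with $\norm{A(\cdot)}^2$ replaced by $\norm{\cdot}^2$: under \eqref{AssPSD}, alternatives $f_1,f_2$ with $\norm{f_1-f_2}^2$ of the prescribed order and bounded Kullback--Leibler divergence can be supported on the first $\cong_{\mu,p,c_A}(R^2\delta^{-2})^{1/(4\mu p+2p+1)}$ SVD directions, which is where the assumption $D\gtrsim_{\mu,p,c_A}(R^2\delta^{-2})^{1/(4\mu p+2p+1)}$ enters. The main obstacle in the proof is the nontrivial control of the product $\abs{r_{\baloracle}'(0)}S_{\baloracle,\lambda}$; everything else is a clean combination of earlier deterministic bounds with Jensen's inequality, so no probabilistic complication arises beyond what is already encapsulated in \cref{ThmWeakMinimax}. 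Notably, no high-probability analysis of $\abs{r_{\baloracle}'(0)}$ via \cref{LemRhoBound} is needed here, since the source condition itself already forces $\abs{r_{\baloracle}'(0)}$ to be small whenever $S_{\baloracle,\lambda}$ is large.
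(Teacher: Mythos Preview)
Your proof is correct and takes a genuinely different, more direct route than the paper's. Both start from \cref{CorRoughBoundStrongErrorSobolevCond} at $t=\baloracle$, where $A_{\baloracle,\lambda}=S_{\baloracle,\lambda}$, and both treat the first summand $R^{2/(2\mu+1)}S_{\baloracle,\lambda}^{2\mu/(2\mu+1)}$ via Jensen and \cref{ThmWeakMinimax}. The difference lies in the second summand $\abs{r_{\baloracle}'(0)}S_{\baloracle,\lambda}$. The paper bounds its expectation by re-running the minimax argument from the proof of \cref{ThmWeakMinimax}: it writes $\E[\abs{r_{\baloracle}'(0)}A_{\baloracle,\lambda}\wedge\abs{r_{\baloracle}'(0)}S_{\baloracle,\lambda}]$, inserts the upper bounds from \cref{LemPropErrorTerms}, replaces $\abs{r_{\baloracle}'(0)}$ by a free parameter $\rho$, and optimises to obtain $\recrate$ directly. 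You instead invert the deterministic inequality $S_{\baloracle,\lambda}=A_{\baloracle,\lambda}\le (\mu+1/2)^{2\mu+1}R^2\abs{r_{\baloracle}'(0)}^{-2\mu-1}$ to get the pathwise bound $\abs{r_{\baloracle}'(0)}S_{\baloracle,\lambda}\lesssim_\mu R^{2/(2\mu+1)}S_{\baloracle,\lambda}^{2\mu/(2\mu+1)}$, which collapses both summands into one before taking expectations. Your argument is shorter, yields the stronger pathwise statement $\norm{\widehat f_{\baloracle}-f^\dagger}^2\lesssim_\mu R^{2/(2\mu+1)}S_{\baloracle,\lambda}^{2\mu/(2\mu+1)}$, and avoids repeating the Riemann-sum optimisation; the paper's route is more parallel to the prediction-error proof and makes the role of the singular-value decay in each term more explicit. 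Your treatment of the lower bound is essentially what the paper records in \cref{RemMinimaxReconstructionError}.
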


\begin{proof}
In the case $Af=0$, we have $\baloracle = 0$ such that $\widehat{f}_{\baloracle} = f^{\dagger} = 0$ and the bound is clear. Suppose $Af \neq 0$. Then $\baloracle > 0$ and $\abs{r_{\baloracle}'(0)}>0$.
\cref{CorRoughBoundStrongErrorSobolevCond} and the identity $S_{\baloracle,\lambda}=A_{\baloracle,\lambda}$ yield
\begin{equation*}
\E\big[\norm{\widehat{f}_{\baloracle}-f^\dagger}^2\big] \le 16 R^{2/(2\mu + 1)} \E[S_{\baloracle,\lambda}]^{2\mu/(2\mu+1)} + 6 \E[\abs{r_{\baloracle}'(0)} S_{\baloracle,\lambda}],
\end{equation*}
where we have applied Jensen's inequality. The first summand has the desired rate by \cref{ThmWeakMinimax}. Using \cref{LemPropErrorTerms:BoundStochError}, \eqref{EqUpperBoundApproxErrorSource} and \eqref{EqBoundExpWeakStochError}, we deduce similarly to the proof of \cref{ThmWeakMinimax} \pagebreak
\begin{align*}
&\E[\abs{r_{\baloracle}'(0)} S_{\baloracle,\lambda}] \\
&= \E[\abs{r_{\baloracle}'(0)} A_{\baloracle,\lambda} \wedge \abs{r_{\baloracle}'(0)} S_{\baloracle,\lambda}] \\
&\le \E\big[R^2(\mu+1/2)^{2\mu+1} \abs{r_{\baloracle}'(0)}^{-2\mu} \wedge \abs{r_{\baloracle}'(0)} \norm{(\mathsf{(\abs{r_{\baloracle}'(0)} x)^{1/2}\wedge 1})\xi}^2\big]\\
&\le \E\Big[\inf_{\rho>0}\big( R^2(\mu+1/2)^{2\mu+1} \rho^{-2\mu} \vee \rho \norm{(\mathsf{(\rho \mathsf x)^{1/2}\wedge \mathsf 1})\xi}^2\big)\Big]\\
&\le \inf_{\rho>0}\big( R^2(\mu+1/2)^{2\mu+1} \rho^{-2\mu} + (C_A \vee 1)^2\delta^2 \tfrac{2p}{2p-1} \rho^{(2p+1)/(2p)} \big) \\
&\lesssim_{\mu,p,C_A} R^{(4p+2)/(4 \mu p + 2p + 1)} \delta^{8\mu p/(4\mu p +2p+1)},
\end{align*}
which gives the right rate for the second summand. Under \eqref{AssPSD}, this rate is minimax optimal for the given range of regularity parameters $\mu$, see \cref{RemMinimaxReconstructionError}.
The minimax optimality for the prediction error was shown in \cref{ThmWeakMinimax,PropWeakErrorLowerBound}.
\end{proof}

\begin{remark} \label{RemMinimaxReconstructionError}
The error rate $\recrate$ coincides with the minimax rate in the infinite-dimensional Gaussian sequence space model, and the corresponding lower bound can be proven in the same way \cite[see][Proposition~4.23]{Joh2017}, very similar to the proof of \cref{PropWeakErrorLowerBound}:
under \eqrefAssPSDlower{} with $p>0$ and \eqref{AssGN}, the rate $\recrate$ is minimax optimal over signals $f$ satisfying $\sourcecondf$, provided $(R^2\delta^{-2})^{1/(4\mu p+2p+1)} \lesssim_{\mu,p,c_A} D$. If $D \ll (R^2\delta^{-2})^{1/(4\mu p+2p+1)}$, then $\delta^2 D^{2p+1} \ll \recrate$, and we obtain exactly as in \cref{RemMinimaxTruncationTimeWeak}:
\begin{equation*}
\E\big[ \norm{\widehat{f}_{\baloracle} - f^\dagger}^2 \big] \le \lambda_D^{-2} \E\big[ \norm{A(\widehat{f}_{\baloracle}-f^\dagger)}^2 \big] \lesssim_{c_A} \delta^2 D^{2p+1} \ll \recrate.
\end{equation*}
Also \cref{RemNoise2Moments} applies again for the upper bound: only conditions on the second moments of the $\xi_i$ are used and the bound is dimension-free.
\end{remark}

\subsection{Early stopping and its reconstruction error}

We analyse the reconstruction error of the early stopping rule $\tau$ from \cref{DefEarlyStopRule}. Unfortunately, there is no reconstruction error analogue of the oracle inequality in \cref{ThmWeakOracleIneqEarlyStop}. Instead, we need to directly bound $\abs{r_{\tau}'(0)}$ and the errors at $\tau$ in high probability to apply \cref{CorRoughBoundStrongErrorSobolevCond}. For this purpose \cref{LemRhoBound,ThmCrossTermBoundHighProb} are essential, requiring $\abs{r_{\tau}'(0)} < \lambda_D^{-2}$.
The case $\abs{r_{\tau}'(0)} \ge \lambda_D^{-2}$ leads to high stochastic error and is dealt with separately,
assuming the same lower bound on the dimension $D$ as for the lower bound on the reconstruction error in \cref{RemMinimaxReconstructionError}. The proofs for this subsection are delegated to \cref{SecProofsTransferReconstructionError}.

\begin{proposition} \label{PropBoundsStrongErrorEarlyStopped}
Assume \eqref{AssGN}, \eqref{AssPSD} and put $M_{\tau,\lambda}\coloneqq A_{\tau,\lambda}\vee S_{\tau,\lambda}$. Under the source condition $\sourcecondf$, the following holds:
\begin{enumproposition}
\item We have for $p>0$ and all $z > 0$ with probability at least $1-e^{-z}$ \label{PropBoundStrongErrorSource}
\begin{equation*}
\norm{\widehat{f}_\tau-f^\dagger}^2 \lesssim_{p,c_A} R^{2/(2\mu+1)}M_{\tau,\lambda}^{2\mu/(2\mu+1)}+\delta^{-4p}S_{\tau,\lambda}^{2p}M_{\tau,\lambda}
+(\log d)^{2p}M_{\tau,\lambda} + z^{2p}M_{\tau,\lambda}.
\end{equation*}
\item We have for $p>1/2$ and all $z > 0$ with probability at least $1-e^{-z}$ \label{PropUpperBoundEarlyStopStrongError}
\begin{equation*}
M_{\tau,\lambda} \lesssim_{\mu,p,C_A} R^{2/(4\mu p +2p +1)}\delta^{(8\mu p + 4p)/(4\mu p +2p+1)} + \abs{A_{\tau,\lambda}-S_{\tau,\lambda}} + \delta^2 z.
\end{equation*}
\item We have for $p>1/2$, $\abs{\kappa-\delta^2 D} \le C_{\kappa} \delta^2 \sqrt{D}$ for some $C_{\kappa} \ge 0$ and all $z \ge 1$ with probability at least $1-5e^{-z}$ \label{PropBoundKappa}
\begin{equation*}
M_{\tau,\lambda}
\lesssim_{\mu,p,c_A,C_A,C_{\kappa}} R^{2/(4\mu p+2p+1)}\delta^{(8\mu p +4p)/(4\mu p +2p+1)}+ \delta^2\big( \sqrt{D} \sqrt{z} + z^{(4p+1)/(4p)}\big),
\end{equation*}
provided $D\gtrsim (R^2 \delta^{-2})^{1/(4 \mu p +2p+1)}
$.
\end{enumproposition}
\end{proposition}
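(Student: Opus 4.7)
\medskip
For part (a), I combine \cref{CorRoughBoundStrongErrorSobolevCond} (keeping the factor $x_{1,\tau}^{-1}\le\lambda_D^{-2}$ for the $>$ part of the reconstruction error in place of the looser $\abs{r_\tau'(0)}$) with a high-probability bound on $\abs{r_\tau'(0)}$ from \cref{LemRhoBound}. The corollary contributes
\begin{equation*}
\norm{\widehat f_\tau - f^\dagger}^2 \lesssim R^{2/(2\mu+1)} M_{\tau,\lambda}^{2\mu/(2\mu+1)} + \abs{r_\tau'(0)}\, M_{\tau,\lambda},
\end{equation*}
while \cref{LemRhoBound} applied at $t=\tau$ under the one-sided PSD lower bound $\lambda_j\ge c_A j^{-p}$ yields, with probability at least $1-e^{-z}$,
\begin{equation*}
\abs{r_\tau'(0)}\wedge\lambda_D^{-2}\lesssim_{p,c_A} \delta^{-4p}S_{\tau,\lambda}^{2p} + (\log d)^{2p} + z^{2p}.
\end{equation*}
In the regime $\abs{r_\tau'(0)}<\lambda_D^{-2}$ this is a direct bound on $\abs{r_\tau'(0)}$; in the opposite regime the same inequality forces $\delta^{-2}S_{\tau,\lambda}\gtrsim D$, so the term $\delta^{-4p}S_{\tau,\lambda}^{2p}M_{\tau,\lambda}$ dominates the crude bound $\lambda_D^{-2}M_{\tau,\lambda}$.

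\medskip
For part (b), I decompose $M_{\tau,\lambda}=\min(A_{\tau,\lambda},S_{\tau,\lambda})+\abs{A_{\tau,\lambda}-S_{\tau,\lambda}}$; since the deviation is already in the stated bound, it remains to prove $\min(A_\tau,S_\tau)\lesssim\predrate+\delta^{2}z$. Let $\rho^{*}\cong (R^{2}\delta^{-2})^{2p/(4\mu p+2p+1)}$ be the balancing value at which both $R^{2}(\rho^{*})^{-2\mu-1}$ and $\delta^{2}(\rho^{*})^{1/(2p)}$ are of order $\predrate$. Under the source condition, \cref{CorUpperBoundApproxErrorSourceCond} combined with \cref{LemPropErrorTerms:BoundApproxError} gives $A_\tau\le R^{2}(\mu+\tfrac{1}{2})^{2\mu+1}\abs{r_\tau'(0)}^{-2\mu-1}$. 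Meanwhile \cref{LemPropErrorTerms:BoundStochError} together with Laurent--Massart $\chi^{2}$-concentration for the weighted sum $\sum_i\bigl((\abs{r_\tau'(0)}\lambda_i^2)\wedge 1\bigr)\xi_i^2$ yields $S_\tau\lesssim_{p,C_A}\delta^{2}\abs{r_\tau'(0)}^{1/(2p)}+\delta^{2}z$ with probability $\ge 1-e^{-z}$. A case distinction on $\abs{r_\tau'(0)}\gtrless\rho^{*}$ closes the argument: if $\abs{r_\tau'(0)}\ge\rho^{*}$ then $A_\tau\lesssim_\mu\predrate$, and otherwise $S_\tau\lesssim\predrate+\delta^{2}z$.

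\medskip
For part (c), I combine part (b) with a high-probability bound on $\abs{A_{\tau,\lambda}-S_{\tau,\lambda}}$. The identity $A_\tau-S_\tau=R_\tau^{2}-\norm{\xi}^{2}-2\scapro{\xi}{\mathsf{r_{\tau,<}}g}$ from the proof of \cref{LemTauwRes}, combined with $R_\tau^{2}=\kappa$, yields
\begin{equation*}
\abs{A_\tau-S_\tau}\le\abs{\kappa-\delta^{2}D}+\bigabs{\norm{\xi}^{2}-\delta^{2}D}+2\abs{\scapro{\xi}{\mathsf{r_{\tau,<}}g}}.
\end{equation*}
The first term is $\le C_\kappa\delta^{2}\sqrt D$ by hypothesis and the second $\lesssim\delta^{2}(\sqrt{Dz}+z)$ by Laurent--Massart concentration. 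The dimension hypothesis $D\gtrsim (R^{2}\delta^{-2})^{1/(4\mu p+2p+1)}$, together with part (b), forces the bracketed expression in \cref{LemRhoBound} to stay strictly below $D$ with high probability, so $\abs{r_\tau'(0)}<\lambda_D^{-2}$ and the sharper PSD form \eqref{EqCrossTermBoundHighProbPSD} of \cref{ThmCrossTermBoundHighProb} applies, giving
\begin{equation*}
\abs{\scapro{\xi}{\mathsf{r_{\tau,<}}g}}\lesssim_{p,c_A,C_A}\delta\norm{\mathsf{r_{\tau,<}}g}\bigl((\delta^{-2}S_\tau)^{p/(2p+1)}+\sqrt{\log d}+\sqrt z\bigr).
\end{equation*}
Using $\norm{\mathsf{r_{\tau,<}}g}^{2}\le 8M_{\tau,\lambda}$ from \cref{Lemrkg} and $S_\tau\le M_{\tau,\lambda}$, substituting into part (b) produces a self-referential inequality of the form $M_\tau\lesssim\predrate+\delta^{2}(\sqrt{Dz}+z)+\delta\sqrt{M_\tau}\bigl((\delta^{-2}M_\tau)^{p/(2p+1)}+\sqrt{\log d}+\sqrt z\bigr)$, which is disentangled by AM--GM. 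The principal obstacle is precisely this disentangling step, together with certifying $\abs{r_\tau'(0)}<\lambda_D^{-2}$ with high probability; the exponent $(4p+1)/(4p)$ in the remainder $\delta^{2}z^{(4p+1)/(4p)}$ is the algebraic signature of squaring $(\delta^{-2}M_\tau)^{p/(2p+1)}$ in this AM--GM step.
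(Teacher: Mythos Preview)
Your arguments for parts (a) and (b) follow the paper's route. One caution in (b): you cannot apply Laurent--Massart concentration directly to $\sum_i\bigl((\abs{r_\tau'(0)}\lambda_i^2)\wedge 1\bigr)\xi_i^2$, since the weights depend on $\tau$ and hence on $\xi$. The paper sidesteps this by first bounding $A_{\tau,\lambda}\wedge S_{\tau,\lambda}\le\inf_{\rho>0}\bigl(R^2(\mu+\tfrac12)^{2\mu+1}\rho^{-2\mu-1}+\norm{(\mathsf{(\rho x)^{1/2}\wedge 1})\xi}^2\bigr)$ and then fixing a deterministic $\rho\thicksim(R\delta^{-1})^{4p/(4\mu p+2p+1)}$, at which point concentration applies cleanly. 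Your case distinction amounts to the same thing once you note that in the case $\abs{r_\tau'(0)}<\rho^*$ monotonicity gives $S_{\tau,\lambda}\le\norm{(\mathsf{(\rho^* x)^{1/2}\wedge 1})\xi}^2$, so concentration is applied at $\rho^*$, not at the random $\abs{r_\tau'(0)}$.

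The genuine gap is in part (c). You claim that the dimension hypothesis together with part (b) certifies $\abs{r_\tau'(0)}<\lambda_D^{-2}$ with high probability. But part (b) reads $M_{\tau,\lambda}\lesssim\predrate+\abs{A_{\tau,\lambda}-S_{\tau,\lambda}}+\delta^2 z$, and $\abs{A_{\tau,\lambda}-S_{\tau,\lambda}}$ is precisely the quantity you are trying to bound via the cross term---so this is circular. There is no a-priori control on $S_{\tau,\lambda}$ strong enough to force the bracketed expression in \cref{LemRhoBound} below $D$; the universal bound $S_{\tau,\lambda}\le\norm{\xi}^2\approx\delta^2 D$ gives nothing. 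The paper instead keeps the case distinction on $\abs{r_\tau'(0)}\gtrless\lambda_D^{-2}$ throughout part (c). On the event $\{\abs{r_\tau'(0)}\ge\lambda_D^{-2}\}$ it abandons \cref{ThmCrossTermBoundHighProb} altogether and bounds the cross term by Cauchy--Schwarz: since then $S_{\tau,\lambda}\ge(1-e^{-1})\norm{\xi}^2$ and, by \cref{LemrtBound} and the source condition, $\norm{\mathsf{r_{\tau,<}}g}\le\norm{\mathsf{\exp(-\lambda_D^{-2}x)}g}\lesssim_{\mu,C_A}R\,D^{-2\mu p-p}$, one gets $\abs{\scapro{\xi}{\mathsf{r_{\tau,<}}g}}\lesssim_{\mu,C_A} M_{\tau,\lambda}^{1/2}R\,D^{-2\mu p-p}$. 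Substituting into (b) and disentangling produces an extra term of size $\delta^2\cdot R^2\delta^{-2}D^{-4\mu p-2p}$, and it is exactly the dimension hypothesis $D\gtrsim(R^2\delta^{-2})^{1/(4\mu p+2p+1)}$ that absorbs this into $\predrate$. That is where the dimension condition actually enters the proof---not in ruling out $\abs{r_\tau'(0)}\ge\lambda_D^{-2}$, which it cannot do.
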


\begin{remark}\label{RemMDeltaBound}
The argument for \cref{PropBoundKappa} hinges on the fact that the bound on $\abs{A_{\tau,\lambda}-S_{\tau,\lambda}}$ derived in \eqref{EqBoundKappaStochErrorPolDecay} is of smaller order than the bound on $M_{\tau,\lambda}$ in \cref{PropUpperBoundEarlyStopStrongError}. This is provided by the bound \eqref{EqCrossTermBoundHighProbPSD}, which is valid under \eqref{AssPSD}, but not under exponential singular value decay.
\end{remark}

Inserting the high probability bound of \cref{PropBoundKappa}  into \cref{PropBoundStrongErrorSource} yields a high probability bound for the reconstruction error that in turn implies the main result on the reconstruction error under early stopping.

\begin{theorem} \label{ThmEarlyStopStrongMinimax}
Assume \eqref{AssPSD} with $p>1/2$, \eqref{AssGN} and $\abs{\kappa-\delta^2 D} \le C_{\kappa} \delta^2 \sqrt{D}$ for some $C_{\kappa} \ge 0$. Under the source condition $\sourcecondf$, we have for all $z \ge 1$ with probability at least $1-6e^{-z}$
\begin{equation*}
\norm{\widehat{f}_{\tau}-f^\dagger}^2 \le C_{\mu,p,c_A,C_A,C_{\kappa}}\Big(\recrate + \delta^2 \big(\sqrt{D} \sqrt{z} + z^{(4p+1)/(4p)}\big)^{2p+1}\Big)
\end{equation*}
and the bound in expectation
\begin{equation*}
\E\big[\norm{\widehat{f}_{\tau}-f^\dagger}^2\big] \le \widetilde{C}_{\mu,p,c_A,C_A,C_{\kappa}} \big(\recrate + \delta^2 D^{p+1/2}\big),
\end{equation*}
provided $(R^2 \delta^{-2})^{1/(4 \mu p +2p+1)} \lesssim D$, where $\recrate$ is given in \eqref{EqDefRecRate}. Here $C_{\mu,p, c_A, C_A,C_{\kappa}}$, $\widetilde{C}_{\mu,p, c_A, C_A,C_{\kappa}}$ are positive constants depending only on $\mu$, $p$, $c_A$, $C_A$ and $C_{\kappa}$. This gives the minimax rate for all regularities $\mu>0$ such that $\sqrt{D} \lesssim (R^2 \delta^{-2})^{1/(4\mu p +2p+1)} \lesssim_{\mu,p,c_A} D$.
\end{theorem}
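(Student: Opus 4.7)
The plan is to combine the two high-probability estimates of \cref{PropBoundsStrongErrorEarlyStopped} via a union bound, and then derive the expectation bound by integrating the resulting tail. The heavy lifting has already been done in the previous proposition, so the proof amounts to a substitution and careful accounting of lower-order terms. Concretely, I would fix $z\ge 1$ and apply \cref{PropBoundStrongErrorSource} together with $S_{\tau,\lambda}\le M_{\tau,\lambda}$ to obtain, with probability at least $1-e^{-z}$,
\begin{equation*}
\norm{\widehat f_\tau - f^\dagger}^2 \lesssim_{p,c_A} R^{2/(2\mu+1)} M_{\tau,\lambda}^{2\mu/(2\mu+1)} + \delta^{-4p} M_{\tau,\lambda}^{2p+1} + \big((\log d)^{2p} + z^{2p}\big)M_{\tau,\lambda}.
\end{equation*}
Then \cref{PropBoundKappa} bounds $M_{\tau,\lambda} \lesssim \mathcal R + \delta^2 (\sqrt{Dz}+z^{(4p+1)/(4p)})$ with probability at least $1-5e^{-z}$, where $\mathcal R \coloneqq R^{2/(4\mu p+2p+1)}\delta^{(8\mu p+4p)/(4\mu p+2p+1)}$. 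A union bound then puts both events together with probability at least $1-6e^{-z}$.

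On this event, I substitute the bound on $M_{\tau,\lambda}$ into the first display. A direct exponent computation gives the two key identities $R^{2/(2\mu+1)}\mathcal R^{2\mu/(2\mu+1)} = \recrate$ and $\delta^{-4p}\mathcal R^{2p+1} = \recrate$; applying $(a+b)^r \lesssim a^r + b^r$ for $r\in\{2\mu/(2\mu+1),2p+1\}$, the first two terms collapse precisely to the target $C(\recrate + \delta^2(\sqrt{Dz}+z^{(4p+1)/(4p)})^{2p+1})$. For the lower-order third and fourth terms, $((\log d)^{2p}+z^{2p}) M_{\tau,\lambda}$, the trick is to exploit the equivalent relation $\mathcal R^{2p+1} = \delta^{4p}\recrate$: Young's inequality with conjugate exponents $(2p+1,(2p+1)/(2p))$ yields $z^{2p}\mathcal R\lesssim \epsilon\recrate + C_\epsilon \delta^2 z^{2p+1}$, and the resulting $z^{2p+1}$ is absorbed into the $z^{(4p+1)(2p+1)/(4p)}$ piece of the target remainder. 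The contribution $(\log d)^{2p}\mathcal R$ is handled analogously, using $(\log d)^{2p+1} \lesssim D^{p+1/2}$, which is guaranteed by the dimension lower bound in the theorem. The remaining cross terms $((\log d)^{2p}+z^{2p})\delta^2(\sqrt{Dz}+z^{(4p+1)/(4p)})$ are absorbed into the remainder by elementary polynomial inequalities in $z$ and $D$.

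For the expectation bound, set $h(z)\coloneqq C(\recrate + \delta^2(\sqrt{Dz}+z^{(4p+1)/(4p)})^{2p+1})$, so that $\PP(\norm{\widehat f_\tau-f^\dagger}^2 > h(z)) \le 6e^{-z}$ for all $z\ge 1$. Integration by parts (equivalently, layer cake with the substitution $t=h(z)$) gives
\begin{equation*}
\E\big[\norm{\widehat f_\tau - f^\dagger}^2\big]\le h(1) + \int_1^\infty 6e^{-z}h'(z)\,dz.
\end{equation*}
Since $\int_1^\infty e^{-z}z^q\,dz$ is a finite constant for every $q\ge 0$, the $z$-dependent factors integrate to constants and the only surviving dimension-dependent contribution is $\delta^2 D^{(2p+1)/2} = \delta^2 D^{p+1/2}$ arising from $(\sqrt{Dz})^{2p+1}$; together with $h(1) \lesssim \recrate + \delta^2 D^{p+1/2}$ this yields the claimed expectation bound. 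Minimax optimality in the prescribed range of $\mu$ follows by matching with the lower bound of \cref{RemMinimaxReconstructionError}.

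The main obstacle is the bookkeeping in the substitution step: a naive plug-in $M_{\tau,\lambda}\lesssim \mathcal R$ into $z^{2p}M_{\tau,\lambda}$ produces $z^{2p}\mathcal R$, which is not manifestly dominated by either $\recrate$ or the remainder $\delta^2(\sqrt{Dz}+z^{(4p+1)/(4p)})^{2p+1}$. The nonobvious identity $\mathcal R^{2p+1} = \delta^{4p}\recrate$ together with the Young-type manipulation above is exactly what dissolves such terms into the two pieces of the final bound, and analogous care is required for the $(\log d)^{2p}\mathcal R$ contribution, where the dimension lower bound is what converts logarithmic factors into powers of $D$.
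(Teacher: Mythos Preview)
Your approach is essentially the same as the paper's: combine \cref{PropBoundStrongErrorSource} and \cref{PropBoundKappa} by a union bound, control the four resulting terms $T_1,\dots,T_4$ by Young-type inequalities, and integrate the tail for the expectation bound (the paper invokes \cref{LemHighProbExpBound}, which is precisely your layer-cake argument). The minor difference is cosmetic: for $T_3,T_4$ you substitute the bound on $M_{\tau,\lambda}$ first and then apply Young, whereas the paper applies Young to $z^{2p}M_{\tau,\lambda}=\delta^{-4p}(\delta^2 z)^{2p}M_{\tau,\lambda}$ first and only afterwards substitutes. Both routes land on $\recrate+\delta^2 z^{2p+1}$.

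There is, however, one real imprecision. For $T_1=R^{2/(2\mu+1)}M_{\tau,\lambda}^{2\mu/(2\mu+1)}$, applying only $(a+b)^{2\mu/(2\mu+1)}\le a^{2\mu/(2\mu+1)}+b^{2\mu/(2\mu+1)}$ leaves you with the cross term
\[
R^{2/(2\mu+1)}\big(\delta^2(\sqrt{Dz}+z^{(4p+1)/(4p)})\big)^{2\mu/(2\mu+1)},
\]
which is \emph{not} of the form $\delta^2(\sqrt{Dz}+z^{(4p+1)/(4p)})^{2p+1}$; the exponents on $R,\delta$ and on the parenthesis are all wrong. To close this you need one further Young inequality with the conjugate pair $\tilde p=(2\mu+1)(2p+1)/(4\mu p+2p+1)$ and $\tilde q=(2\mu+1)(2p+1)/(2\mu)$, which yields
\[
(R^2\delta^{-2})^{1/(2\mu+1)}W^{2\mu/(2\mu+1)}\le (R^2\delta^{-2})^{(2p+1)/(4\mu p+2p+1)}+W^{2p+1}
\]
and hence the target after multiplying by $\delta^2$. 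This is exactly the step the paper writes out explicitly; your phrase ``collapse precisely to the target'' skips it.

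Two smaller remarks: the absorption $(\log d)^{2p+1}\lesssim D^{p+1/2}$ follows from $d\le D$ and $\log D\lesssim\sqrt D$ for $D\ge 3$; no appeal to the dimension lower bound is needed. And for the expectation bound note that the tail inequality $6e^{-z}$ exceeds $1$ for $z<\log 6$, so your splitting at $z=1$ is a harmless overcount but you may want to check finiteness of $\E\norm{\widehat f_\tau-f^\dagger}^2$ separately (it follows since $\widehat f_\tau=A^\dagger(\id_D-r_\tau(AA^\top))Y$ is polynomially bounded in $\norm{Y}$).
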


\begin{remark} \label{RemEarlyStopStrongMinimax}
Comparing \cref{ThmEarlyStopStrongMinimax} and \cref{CorEarlyStopWeakMinimax}, we note that the early stopping rule $\tau$ is minimax adaptive both for prediction and reconstruction error over the same range of regularity parameters $\mu$.

The error term $\delta^2 D^{p+1/2}$ in \cref{ThmEarlyStopStrongMinimax} is due to the difference $\abs{\norm{\xi}^2-\delta^2D}$ and also appears in other early stopping regularisation methods \cite{BlaHofRei2018EJS,Sta2020}.
\end{remark}

\begin{proposition} \label{PropDiscrepancyPrincipleOrderOpt}
Under \eqref{AssDN} and the source condition $\sourcecondf$, the discrepancy principle $\tau$ with $\kappa \coloneqq c^2 \delta^2$, $c>1$, satisfies
\begin{equation*}
\norm{\widehat{f}_{\tau}-f^\dagger}^2 \le C_{\mu,c} R^{2/(2\mu+1)} \delta^{4\mu/(2\mu+1)},
\end{equation*}
where $C_{\mu,c}>0$ is a constant depending only on $\mu$ and $c$.
\end{proposition}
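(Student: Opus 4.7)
The plan is to mimic the classical deterministic analysis of EHN 1996 but within the framework of the paper, using the decomposition from \cref{PropRoughBoundStrongError} together with the prediction-error bound for the discrepancy principle already proved just above. Crucially, the residual norm at $\tau$ directly controls the relevant signal projection, which will let us avoid the circular need for a lower bound on $\abs{r_\tau'(0)}$.

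First I would invoke the intermediate inequality \eqref{EqRoughBoundStrongError} from the proof of \cref{PropRoughBoundStrongError}, namely
\begin{equation*}
\norm{\widehat{f}_\tau-f^\dagger}^2 \le 2\norm{r_{\tau,<}(A^\top A)f^\dagger}^2 + 2\norm{A^\dagger(\id_D-r_\tau(AA^\top))_<\xi}^2 + \abs{r_\tau'(0)}\norm{A(\widehat{f}_\tau-f)}^2.
\end{equation*}
For the third summand, the preceding proposition gives $\norm{A(\widehat{f}_\tau-f)}^2\le (c+1)^2\delta^2$. For the second, \eqref{EqStochErrorBoundStrongError} combined with $S_{\tau,\lambda}\le\norm{\xi}^2\le\delta^2$ yields $\norm{A^\dagger(\id_D-r_\tau)_<\xi}^2\le \abs{r_\tau'(0)}\delta^2$. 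Thus both remaining noise-dependent terms are of order $\abs{r_\tau'(0)}\delta^2$, up to constants depending on $c$.

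Next I would bound $\abs{r_\tau'(0)}$ from above: by continuity of $t\mapsto\abs{r_t'(0)}$ and $t\mapsto R_t$, we have $R_\tau=c\delta$, and \cref{PropWeakMinimaxDN} gives
\begin{equation*}
c\delta=R_\tau\le R(\mu+1/2)^{\mu+1/2}\abs{r_\tau'(0)}^{-\mu-1/2}+\delta,
\end{equation*}
so that $\abs{r_\tau'(0)}\le (\mu+1/2)\bigl(R/((c-1)\delta)\bigr)^{2/(2\mu+1)}$. Consequently $\abs{r_\tau'(0)}\delta^2\lesssim_{\mu,c} R^{2/(2\mu+1)}\delta^{4\mu/(2\mu+1)}$, handling the stochastic part and the prediction-error part. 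The crucial step is then to bound the bias term $\norm{r_{\tau,<}(A^\top A)f^\dagger}^2$. Here I would not use \cref{CorUpperBoundApproxErrorSourceCond} (which would reintroduce $\abs{r_\tau'(0)}^{-2\mu-1}$ and force a matching lower bound) but exploit the discrepancy principle directly via the interpolation inequality \eqref{EqInterpolIneq}:
\begin{equation*}
\norm{r_{\tau,<}(A^\top A)f^\dagger}^2 \le R^{2/(2\mu+1)}\norm{\mathsf{r_{\tau,<}}g}^{4\mu/(2\mu+1)}.
\end{equation*}

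The point is that $\norm{\mathsf{r_{\tau,<}}g}$ is controlled by $\delta$ without reference to $\abs{r_\tau'(0)}$: using $r_{\tau,<}(x)^2\le r_\tau(x)^2$ we get $\norm{\mathsf{r_{\tau,<}}Y}\le R_\tau=c\delta$, and using $0\le r_{\tau,<}\le 1$ on $[0,x_{1,\tau})$ we get $\norm{\mathsf{r_{\tau,<}}\xi}\le\norm{\xi}\le\delta$, so by the triangle inequality
\begin{equation*}
\norm{\mathsf{r_{\tau,<}}g}\le \norm{\mathsf{r_{\tau,<}}Y}+\norm{\mathsf{r_{\tau,<}}\xi}\le (c+1)\delta.
\end{equation*}
Inserting this yields $\norm{r_{\tau,<}(A^\top A)f^\dagger}^2\lesssim_{c} R^{2/(2\mu+1)}\delta^{4\mu/(2\mu+1)}$, and collecting all three contributions gives the claimed rate. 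The main obstacle, i.e.\ the apparent need for both an upper and a lower bound on $\abs{r_\tau'(0)}$ when working from \cref{CorBoundStrongErrorSourceCondDN}, is bypassed by using the residual-based control of $\norm{\mathsf{r_{\tau,<}}g}$ before invoking the source condition; everything else is an assembly of lemmas stated earlier in the excerpt.
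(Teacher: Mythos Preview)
Your proof is correct and follows essentially the same route as the paper: both use the decomposition \eqref{EqRoughBoundStrongError}, bound $\abs{r_\tau'(0)}$ from above via \eqref{EqDNBoundResidual}, apply the interpolation inequality \eqref{EqInterpolIneq}, and control $\norm{\mathsf{r_{\tau,<}}g}$ and $\norm{A(\widehat f_\tau-f)}$ by $R_\tau+\norm{\xi}\le (c+1)\delta$ while using $S_{\tau,\lambda}\le\delta^2$. The only difference is expository---you spell out the intermediate steps and the rationale for not going through \cref{CorBoundStrongErrorSourceCondDN}, whereas the paper compresses the argument into a few lines.
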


\begin{remark} \label{RemOrderOptRegMethod}
\citet[Theorem~7.12]{EngHanNeu1996} obtain the same result for noninterpolated CG, yet arguing sometimes differently. In addition, they have to control the difference between two iterates using the update polynomials, which we circumvent by interpolation.
\end{remark}


\section{Numerical examples} \label{SecNumericalIllustration}

We illustrate the performance of early stopping for the conjugate gradient algorithm by Monte-Carlo simulations.
First, we consider the same simulation setting as \citet{BlaHofRei2018EJS,BlaHofRei2018SIAM}: we assume the mildly ill-posed case \eqref{AssPSD} with $p=1/2$ and $c_A = C_A = 1$, i.e.\ $\lambda_i=i^{-1/2}$, $i=1,\dots,D$, with dimension  $D=10\, 000$ and Gaussian noise \eqref{AssGN} of level $\delta = 0.01$.

The SVD representations of the three signals from \cite{BlaHofRei2018EJS,BlaHofRei2018SIAM,Sta2020} are given by
\begin{align}
f_i^{(1)} &= 5 \exp(-0.1 i), \tag{supersmooth} \\
f_i^{(2)} &= 5000 \abs{\sin(0.01i)}i^{-1.6}, \tag{smooth} \\
f_i^{(3)} &= 250 \abs{\sin(0.002 i)} i^{-0.8}, \quad i=1,\dots,D. \tag{rough}
\end{align}
The names allude to the decay of the coefficients, interpreted as Fourier coefficients. For every signal we run $1000$ Monte-Carlo simulations.

In addition, we consider the test problem \emph{gravity} from the popular Matlab package \emph{Regularization Tools} \cite{Han2007,Han2010}. The one-dimensional gravity surveying problem leads to the Fredholm integral equation \cite[(2.1)]{Han2010} on $L^2([0,1])$
\begin{equation*}
\int_0^1 K(s,t) f^{(4)}(t) dt = g(s) \quad \text{with} \quad K(s,t) = \mathtt{d}(\mathtt{d}^2 + (s-t)^2)^{-3/2}, \quad s,t \in [0,1],
\end{equation*}
which is discretised by the midpoint quadrature rule. The ill-posedness is controlled by the parameter $\mathtt{d}$. We choose the default value $\mathtt{d} = 0.25$. The true signal (\emph{mass distribution}) is given by
\begin{equation*}
f^{(4)}(t) = \sin(\pi t) + 0.5 \sin(2\pi t), \quad t \in [0,1].
\end{equation*}
We obtain discretisations $A \in \R^{D \times D}$ and $f^{(4)} \in \R^D$ of the forward operator and the signal, respectively, for dimension $D = 4096$. Under \eqref{AssGN} with noise level $\delta = 0.01$, we run $1000$ Monte-Carlo simulations. \citet{Jah2022} considers the same example for the early stopped Landweber (classical gradient descent) algorithm.

\begin{figure}
\centering
\includegraphics{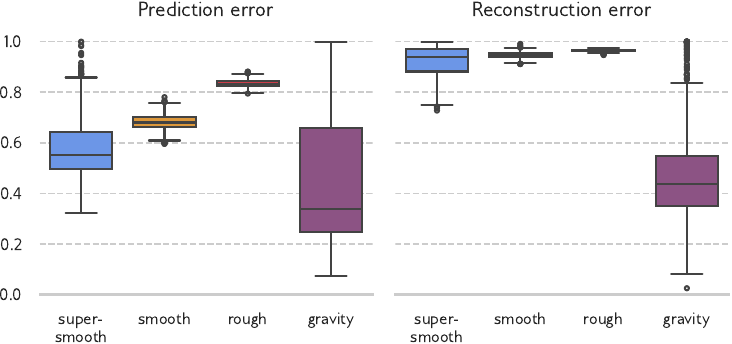}
\caption{Relative efficiencies of early stopping for prediction and reconstruction error (higher is better)}\label{FigRelativeEfficiencies}
\end{figure}

In both settings, we calculate  the conjugate gradient estimator $\widehat{f}_{\tau}$, where the threshold for the early stopping rule is chosen as $\kappa \coloneqq \delta^2 D$ for the first three examples and $\kappa \coloneqq \delta^2 D + \delta^2 \sqrt{D}$ for the gravity problem. At the latest, the algorithm is terminated at step $T \in \N$ if $T$ reaches the maximal iteration index $D$ or if the squared norm $\norm{A^\top (Y-A\widehat{f}_{T})}^2$ of the transformed residual vector is smaller than $10^{-8}$ (\emph{emergency stop}). Otherwise, it would break down with division by zero \cite[p.~179]{EngHanNeu1996}. Note that for
ill-conditioned matrices, $R_t^2$ is not necessarily close to zero for $\norm{A^\top (Y-A\widehat{f}_{t})}^2$ to be small.

We measure how close the estimator $\widehat{f}_\tau$ is to the optimal iterate along the (continuous) interpolation path $(\widehat f_t,t\in[0,T])$ by calculating  the \emph{relative efficiencies}
\begin{equation*}
\frac{\min_{t \in [0,T]}\norm{A(\widehat{f}_t-f)}}{\norm{A(\widehat{f}_{\tau}-f)}} \quad \text{and} \quad \frac{\min_{t \in [0,T]}\norm{\widehat{f}_t-f}}{\norm{\widehat{f}_{\tau}-f}}
\end{equation*}
for the prediction and reconstruction error, respectively. The values lie in $[0,1]$ and ideally should be close to one.

First, consider the performance for the signals $f^{(1)},f^{(2)},f^{(3)}$ in \cref{FigRelativeEfficiencies}. The relative efficiencies are surprisingly high, especially for the reconstruction error. Note that a relative efficiency larger than $0.5$ means that the error is less than twice the optimal error. For both criteria the supersmooth signal suffers from higher variability than the other two due to its very small optimal errors, compare  \cref{TabSimulations}. \citet{BlaHofRei2018SIAM} analyse the same stopping rule for the Landweber algorithm. Their results for the three signals are very similar, conjugate gradients even perform slightly better.

The relative efficiencies for the gravity example show more variability. Note that this inverse problem is severely ill-posed with exponentially decaying singular values violating our assumptions for the adaptivity in reconstruction error of early stopping. Additionally, the optimal oracle errors are even smaller than for the supersmooth signal $f^{(1)}$, see \cref{TabSimulations}. So, the gravity test problem serves as an example where  the theory lets us expect bad performance. Further problems arise from machine precision: although in theory the residuals reach zero at iteration $D$ at the latest, in our calculations the residuals tend to stagnate at some nonzero value, which might be larger than the usual threshold $\delta^2 D$. In these cases, the algorithm is terminated via the aforementioned emergency stop. In unreported simulations with $\kappa \coloneqq \delta^2 D$, this happened in $44$ percent of the runs.

\cref{RemMDeltaBound} explains that we cannot expect $\abs{S_{\tau,\lambda}-A_{\tau,\lambda}}$ to be small for severely ill-posed problems so that we loose control of $\abs{r_{\tau}'(0)}$. Therefore, we recommend to stop slightly earlier and we choose here $\kappa \coloneqq \delta^2 D + \delta^2 \sqrt{D}$, still satisfying our condition on $\kappa$ in \cref{CorEarlyStopWeakMinimax} with $C_{\kappa} = 1$, leading to emergency stops in $21$ percent of the runs. If we ignore these runs, the minimal relative efficiency for the reconstruction error is about $0.13$ such that in the worst case the reconstruction error is eight times the optimal error.
The interesting conclusion is that the early stopping criterion provides reasonable results even in a setting not covered by the theory.

\begin{table}%
\centering%
{\small%
\begin{tabular}{l@{\hskip 5pt}c@{\hskip 3.5pt}c@{\hskip 3.5pt}c@{\hskip 3.5pt}c@{\hskip 3.5pt}c@{\hskip 3.5pt}c@{\hskip 3.5pt}c}
\toprule%
& \multicolumn{3}{@{}c@{}}{Stopping indices} & \multicolumn{4}{@{}c@{}}{Corresponding errors} \\\cmidrule{2-4}\cmidrule{5-8}%
& $\predoracle$ & $\recoracle$ & $\tau$ & $\norm{A(\widehat{f}_{\predoracle}-f)}$ & $\norm{A(\widehat{f}_{\tau}-f)}$ & $\norm{\widehat{f}_{\recoracle}-f}$ & $\norm{\widehat{f}_{\tau}-f}$ \\
\midrule
$f^{(1)}$ & $6.48$\,($0.06$) & $5.56$\,($0.04$) & $5.07$\,($0.24$) & $0.1$\,($0.01$) & $0.18$\,($0.02$) & $0.81$\,($0.02$) & $0.87$\,($0.05$) \\
$f^{(2)}$ & $15.42$\,($0.61$) & $12.57$\,($0.04$) & $11.45$\,($0.07$) & $0.28$\,($0.01$) & $0.41$\,($0.01$) & $5.68$\,($0.06$) & $6.01$\,($0.08$) \\
$f^{(3)}$ & $19.5$\,($0.19$) & $17.34$\,($0.05$) & $14.15$\,($0.05$) & $0.56$\,($0.01$) & $0.67$\,($0.01$) & $21.86$\,($0.11$) & $22.69$\,($0.12$) \\
$f^{(4)}$ & $12.45$\,($1.05$) & $12.43$\,($0.81$) & $6.97$\,($2.54$) & $0.03$\,($0.01$) & $0.09$\,($0.03$) & $0.5$\,($0.1$) & $1.18$\,($0.3$) \\
\bottomrule
\end{tabular}}
\caption{Medians (mean absolute deviations) of stopping indices and errors for different signals}
\label{TabSimulations}
\end{table}

In \cref{TabSimulations}, we state the medians (and mean absolute deviations around the median) of the optimal (noise-dependent) stopping indices
\begin{equation*}
\predoracle = \argmin_{t \in [0,T]} \norm{A(\widehat{f}_t-f)} \quad \text{and} \quad \recoracle = \argmin_{t \in [0,T]} \norm{\widehat{f}_t-f}
\end{equation*}
and of the stopping rules $\tau$ as well as of the corresponding root mean squared errors for the Monte-Carlo iterations.
For all signals we tend to stop rather too early than too late compared to $\predoracle$ and $\recoracle$. \citet{BlaHofRei2018SIAM} argue that in case of the Landweber method for the first three signals a slightly smaller value of $\kappa$ is appropriate.

Finally, we analyse the behaviour of the errors for $f^{(1)},f^{(2)},f^{(3)}$ in the asymptotic regime where the dimension $D(\delta)$ grows as the noise level $\delta$ vanishes. Following the asymptotical setting of \citet{Sta2020}, we choose the dimensions $D = D_m = 100 \cdot 2^m$ for $m=0,\dots,10$ with noise levels
\begin{equation*}
\delta_m = R D_m^{-2 \mu p -p - 1/2} \quad \text{such that} \quad D_m=(R^2 \delta_m^{-2})^{1/(4\mu p + 2p +1)}, \quad m=0,\dots,10,
\end{equation*}
where $R=1000$, $\mu = 1/4$ and $p=1/2$. For this choice of $\mu$, the rough signal $f^{(3)}$ fulfils the source condition \eqref{EqDefSourceCond}. The dimensions $D_m$ satisfy the condition for the lower bounds in \cref{PropWeakErrorLowerBound,RemMinimaxReconstructionError} with an equality. Setting $D=10 \, 000$ would yield $\delta=0.01$, as in  the previous setting. For each noise level, we calculate the Monte-Carlo means (in $1000$ runs) of the errors $\norm{A(\widehat{f}_{\tau}-f)}^2$ and $\norm{\widehat{f}_{\tau}-f}^2$ and compare them to the Monte-Carlo means of the minimal errors $\norm{A(\widehat{f}_{\predoracle}-f)}^2$ and $\norm{\widehat{f}_{\recoracle}-f}^2$, respectively. The rates of the former are plotted in \cref{FigConvergenceRates} as solid lines, those of the latter as dashed lines. Additionally, we depict the slopes of the theoretical minimax rates $\predrate = R^2 D^{-4 \mu p -2p}$ and $\recrate = R^2 D^{-4 \mu p}$ for prediction and reconstruction error, respectively. Note that the log-log plots amplify differences between rates for small values.

\begin{figure}
\centering
\includegraphics{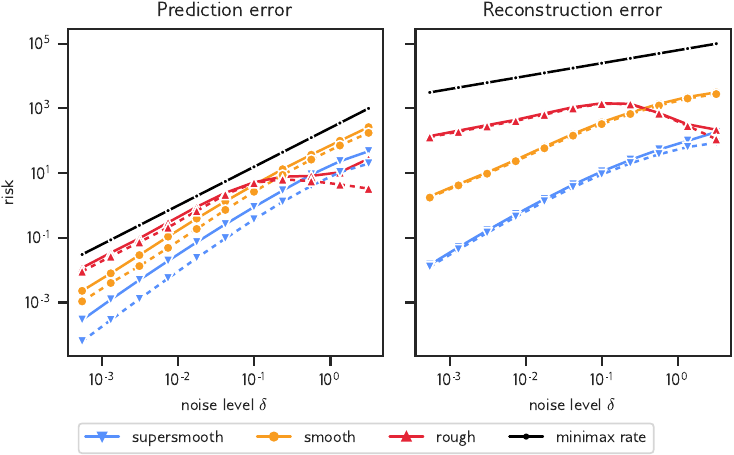}
\caption{Log-log plots of the prediction and reconstruction error rates for $f^{(1)}$, $f^{(2)}$ and $f^{(3)}$ (solid lines: risk estimates at $\tau$, dashed lines: risk estimates at $\predoracle$ and $\recoracle$, respectively)} \label{FigConvergenceRates}
\end{figure}

For all three signals and both error criteria, the early stopped estimators closely match the optimal ones. For the rough signal, the asymptotic regime applies only for small $\delta$, that is large $D$. This is explained by the fact that the signals still change with $D$, having more nonzero coefficients and thus inducing larger approximation errors.  Again, the similarity is even higher for the reconstruction error. Note that the choice $\mu=1/4$ corresponds to the rough signal, for which the slopes of the minimax rates are well-matched. For the smoother signals, the convergence is faster, as expected for an adaptive method.


\section{Remaining results and proofs}\label{SecAppendix}

For the convenience of the reader, the next two subsections collect and sometimes prove classical results for the CG algorithm.

\subsection{Results for zeros of orthogonal polynomials} \label{SecZerosOrthoPolynomials}

We consider the Wronskian
\begin{equation}
W(x; p,q) \coloneqq p(x)q'(x) - p'(x)q(x), \quad x \in \R, \label{DefWronskian}
\end{equation}
of two polynomials $p$ and $q$ on $\R$. Note that the Wronskian $W(x;p,q)$ is continuous in~$x$.
The following result is due to \citet[Section~4]{BeaDri2005}.

\begin{lemma}[Properties of the Wronskian] \label{LemPropWronskian}
If $W(x;p,q)$ is nonzero in an open interval $J \subset \R$, then
\begin{enumlemma}
\item $p$ and $q$ have no common zeros in $J$,
\item any zero of $p$ and $q$ in $J$ is a simple zero,
\item any two consecutive zeros of $p$ in $J$ are separated by a zero of $q$,
\item any two consecutive zeros of $q$ in $J$ are separated by a zero of $p$.
\end{enumlemma}
\end{lemma}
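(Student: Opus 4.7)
The plan is to exploit throughout that $W(\cdot;p,q)$ is a polynomial, hence continuous, and so if it is nonzero on the open interval $J$ it must be of constant sign there. That constancy of sign is the real lever behind items (c) and (d), while (a) and (b) are immediate from direct substitution.

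For (a), I would evaluate $W$ at a hypothetical common zero $x_0 \in J$ of $p$ and $q$: both $p(x_0)q'(x_0)$ and $p'(x_0)q(x_0)$ vanish, forcing $W(x_0)=0$ and contradicting the hypothesis. For (b), if $p$ had a zero of multiplicity at least two at some $x_0 \in J$, then $p(x_0)=p'(x_0)=0$ again yields $W(x_0)=0$; the symmetric observation with the roles of $p$ and $q$ swapped handles a multiple zero of $q$.

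For (c), the strategy is as follows. Pick consecutive zeros $x_1<x_2$ of $p$ in $J$. By (b) these are simple, so $p'(x_j)\neq 0$ for $j=1,2$; moreover, since $p$ does not vanish on $(x_1,x_2)$, the derivatives $p'(x_1)$ and $p'(x_2)$ must have opposite signs (a simple zero is a sign change). By (a), $q(x_1),q(x_2)\neq 0$. Evaluating the Wronskian at these points gives $W(x_j;p,q) = -p'(x_j)q(x_j)$, and since $W$ has constant sign on $J$ we must have $p'(x_1)q(x_1)$ and $p'(x_2)q(x_2)$ of the same sign; combined with $p'(x_1)p'(x_2)<0$ this forces $q(x_1)q(x_2)<0$, and the intermediate value theorem produces a zero of $q$ in $(x_1,x_2)$. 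Item (d) follows by the symmetric identity $W(x;q,p)=-W(x;p,q)$, which remains nonzero on $J$, so the argument just applied to $p$ transfers verbatim to $q$.

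This is a classical Sturm-type argument; there is no genuine obstacle, the only pitfall being that one really needs (b) before starting (c), since otherwise $p'(x_j)=0$ would break the sign-change step. I would therefore carry out the four items in the order (a), (b), (c), (d) so that each item is available when invoked in the next.
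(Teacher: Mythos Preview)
Your proof is correct and complete; the paper itself does not prove this lemma but only attributes it to \citet[Section~4]{BeaDri2005}, so there is no in-paper argument to compare against. Your Sturm-type argument is exactly the standard one and each step is sound: the constant sign of $W$ on the connected interval $J$ follows from continuity, (a) and (b) are immediate from $W(x_0)=p(x_0)q'(x_0)-p'(x_0)q(x_0)$, and in (c) the opposite signs of $p'(x_1),p'(x_2)$ at consecutive simple zeros are justified since $p$ keeps a fixed sign on $(x_1,x_2)$.
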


Let $\mu$ denote a discrete measure on the interval $[a,b] \subset \R$ with a support consisting of $N < \infty$ points. Denote by $\norm{\cdot}_{L^2(\mu)}$ the $L^2$-norm with respect to $\mu$.

\begin{lemma} \label{LemWronskianOrthPolynomials}
Let $(p_n)_{n=0}^N$ be a sequence of orthogonal polynomials in $L^2([a,b],\mu)$ with $\norm{p_n}_{L^2(\mu)} > 0$ for $n=0,\dots,N-1$. Then the Wronskian $W(x;p_n,p_{n+1})$, $n=0,\dots,N-1$, of two consecutive orthogonal polynomials does not vanish on $(a,b)$.
\end{lemma}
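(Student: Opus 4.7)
The plan is to exploit the classical three-term recurrence for orthogonal polynomials and re-express the Wronskian as a positive combination of squares $p_k^2(x)$.

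Since $(p_n)_{n=0}^N$ is orthogonal in $L^2(\mu)$ with $\norm{p_n}_{L^2(\mu)} > 0$ for $n \le N-1$, expanding $x\, p_n$ in the basis $p_0,\ldots,p_{n+1}$ and using the self-adjointness of multiplication by $x$ yields the recurrence
\[
p_{n+1}(x) = (\alpha_n x + \beta_n)\, p_n(x) - \gamma_n\, p_{n-1}(x), \qquad n = 0, \ldots, N-1,
\]
with the convention $\gamma_0 \coloneqq 0$. After renormalising each $p_n$ by a positive constant so that its leading coefficient is positive -- a harmless modification that does not affect whether the Wronskian vanishes -- one has $\alpha_n > 0$, and taking the $L^2(\mu)$-inner product of the recurrence with $p_{n-1}$ identifies
\[
\gamma_n = \alpha_{n-1}\, \frac{\norm{p_n}_{L^2(\mu)}^2}{\norm{p_{n-1}}_{L^2(\mu)}^2} > 0 \quad \text{for } n \ge 1.
\]

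Differentiating the recurrence and substituting into $W(x; p_n, p_{n+1}) = p_n\, p_{n+1}' - p_n'\, p_{n+1}$, the two occurrences of $(\alpha_n x + \beta_n)\, p_n\, p_n'$ cancel, and a direct computation gives the key identity
\[
W(x; p_n, p_{n+1}) = \alpha_n\, p_n^2(x) + \gamma_n\, W(x; p_{n-1}, p_n), \qquad n = 1, \ldots, N-1,
\]
with base case $W(x; p_0, p_1) = \alpha_0\, p_0^2$ computed directly from $p_0' \equiv 0$. Iterating the recursion yields
\[
W(x; p_n, p_{n+1}) = \sum_{k=0}^{n} c_k\, p_k^2(x), \qquad c_k \coloneqq \alpha_k \prod_{j=k+1}^{n} \gamma_j > 0.
\]
Since $p_0$ is a nonzero constant, the term $c_0\, p_0^2$ is already a strictly positive number, and all remaining summands are nonnegative. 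Hence $W(x; p_n, p_{n+1}) > 0$ for every $x \in \R$, a fortiori on $(a,b)$. In fact the conclusion is stronger than what is stated: the Wronskian does not vanish anywhere on the real line.

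The main obstacle is the sign bookkeeping for the recurrence coefficients $\alpha_n, \gamma_n$, which depends on the chosen normalisation of the $p_n$. Once the rescaling to positive leading coefficients is in place and the standard identification of $\gamma_n$ via the symmetry of multiplication by $x$ is invoked, the entire argument collapses to a one-line induction from the Wronskian recursion.
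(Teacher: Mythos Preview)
Your argument is correct and follows essentially the same route as the paper: both express the Wronskian as a sign-definite linear combination of the squares $p_0^2,\ldots,p_n^2$, the paper by invoking the confluent Christoffel--Darboux formula from Szeg\H{o}, you by deriving this identity from scratch via the three-term recurrence and the Wronskian recursion $W(x;p_n,p_{n+1})=\alpha_n p_n^2+\gamma_n W(x;p_{n-1},p_n)$. One cosmetic slip: the explicit expression you give for $\gamma_n$ is off by a factor (the correct value is $\gamma_n=\tfrac{\alpha_n}{\alpha_{n-1}}\,\norm{p_n}_{L^2(\mu)}^2/\norm{p_{n-1}}_{L^2(\mu)}^2$), and the ``renormalising by a positive constant'' should read ``by a nonzero constant'' since a sign flip may be needed; neither affects the proof, which only uses $\gamma_n>0$.
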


\begin{proof}
Denote by $k_m$, $m=0,\dots,N$, the highest coefficient of $p_m$. Since $\norm{p_n}_{L^2(\mu)} > 0$, $n=0,\dots,N-1$, we obtain by the Christoffel--Darboux formula \cite[Theorem~3.2.2]{Sze1939} that
\begin{align*}
W(x;p_n,p_{n+1}) &= p_n(x) p_{n+1}'(x) - p_n'(x) p_{n+1}(x) \\
&= \frac{\norm{p_n}_{L^2(\mu)}^2 k_{n+1}}{k_n} \bigg(\sum_{i=1}^n \frac{p_i^2(x)}{\norm{p_i}_{L^2(\mu)}^2} + \frac{k_0^2}{\norm{p_0}_{L^2(\mu)}^2}\bigg) \neq 0. \qedhere
\end{align*}
\end{proof}

\subsection{Results for  classical conjugate gradients} \label{SecClassicalConjugateGradientMethod}

Recall \cref{DefEstimators} of the noninterpolated estimators.

\begin{lemma} \label{LemRkWellDef}
The residual polynomial $r_k$ is well-defined for $k=0,\dots,d$. Moreover, $r_d$ is uniquely determined by its zeros $0 < \tilde\lambda_d^2 < \dots < \tilde\lambda_1^2 = \norm{AA^\top}_{\mathrm{spec}}$.
\end{lemma}

\begin{proof}
For $k < d$, the orthogonal projection with respect to $\scapro{\cdot}{\cdot}_Y$ of the origin on the affine subspace $\Pol_{k,1}$ of $\Pol_k$ uniquely solves the minimisation problem $\min_{p_k \in \Pol_{k,1}} \norm{\mathsf{p_k}}_Y^2$. Therefore, $r_k$ is well-defined for $k<d$. In the case $k=d$, note that $\min_{p_d \in \Pol_{d,1}} \norm{\mathsf{p_d}}_Y^2 = 0$, since $A$ has $d$ distinct nonzero singular values. By Assumption~\eqref{AssY}, the solution is uniquely defined, namely as the polynomial in $\Pol_{d,1}$, which has the squared singular values $\tilde{\lambda}_i^2$, $i=1,\dots,d$, as zeros. This proves the well-definedness and claimed characterisation of $r_d$.
\end{proof}

\begin{lemma} \label{LemOrthogonalityY}
The residual polynomial $r_k$ is orthogonal to $\Pol_{k,0}$ with respect to $\scapro{\cdot}{\cdot}_Y$ for $k=0,\dots,d$. We write $r_k\perp_Y \Pol_{k,0}.$
\end{lemma}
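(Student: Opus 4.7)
The plan is to separate the cases $k<d$, where $\scapro{\cdot}{\cdot}_Y$ is a genuine inner product on $\Pol_k$ by Assumption~\eqref{AssY}, and $k=d$, where positive definiteness fails but $r_d$ has a concrete form that makes the statement trivial.

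For $0 \le k < d$ I would use the standard variational argument for orthogonal projection onto an affine subspace. Observe that $\Pol_{k,1}$ is an affine subspace of $\Pol_k$ with underlying linear subspace $\Pol_{k,0}$: indeed, for any $q \in \Pol_{k,0}$ and any $t \in \R$, the polynomial $r_k + tq$ still evaluates to $1$ at the origin and has degree at most $k$, so it lies in $\Pol_{k,1}$. By the defining minimality of $r_k$, the scalar function
\begin{equation*}
\varphi(t) \coloneqq \norm{(r_k + tq)(AA^\top)Y}^2 = \norm{\mathsf{r_k}}_Y^2 + 2t \scapro{\mathsf{r_k}}{\mathsf{q}}_Y + t^2 \norm{\mathsf{q}}_Y^2
\end{equation*}
attains its minimum at $t=0$. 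Setting $\varphi'(0)=0$ yields $\scapro{\mathsf{r_k}}{\mathsf{q}}_Y = 0$, which is the claim. (For $k=0$, $\Pol_{0,0}=\{0\}$ and there is nothing to show.)

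For $k=d$ the argument is even simpler. By the computation preceding the lemma and the remarks in Section~\ref{SecOverviewMainResults}, the polynomial $p(x) = \prod_{j=1}^{d}(1 - x/\tilde\lambda_j^2) \in \Pol_{d,1}$ vanishes at each eigenvalue of $AA^\top$, so $p(AA^\top)=0$ and therefore $\norm{\mathsf{p}}_Y = 0$. Since any minimizer $r_d \in \Pol_{d,1}$ must achieve $\norm{\mathsf{r_d}}_Y=0$, Assumption~\eqref{AssY} forces $r_d(\tilde\lambda_j^2)=0$ for $j=1,\dots,d$, whence $r_d(AA^\top)Y = 0$. Consequently $\scapro{\mathsf{r_d}}{\mathsf{q}}_Y = \scapro{0}{q(AA^\top)Y} = 0$ for every $q \in \Pol_{d,0}$.

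There is no real obstacle here; the only subtlety to flag is that for $k=d$ the form $\scapro{\cdot}{\cdot}_Y$ is only positive semidefinite on $\Pol_d$, so one cannot invoke the projection theorem in the same abstract form as for $k<d$. This is why I handle that case by directly exhibiting $r_d(AA^\top)Y = 0$ instead of differentiating.
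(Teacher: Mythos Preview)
Your proof is correct and follows essentially the same approach as the paper. Both split into the cases $k<d$ and $k=d$; for $k<d$ the paper phrases the argument as ``$r_k-1$ is the orthogonal projection of $-1$ onto $\Pol_{k,0}$'' while you write out the equivalent first-order variational condition directly, and for $k=d$ the paper invokes the lemma identifying the zeros of $r_d$ while you reproduce that argument inline.
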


\begin{proof}
First, consider the case $k<d$. By the definition of $r_k$, the polynomial $r_k-1$ is the orthogonal projection with respect to $\scapro{\cdot}{\cdot}_Y$ of $-1$ onto the linear subspace $\Pol_{k,0}$. Therefore, $r_k = r_k-1 - (-1)$ is orthogonal to $\Pol_{k,0}$ with respect to $\scapro{\cdot}{\cdot}_Y$. For $k=d$, the claim follows from \cref{LemRkWellDef}.
\end{proof}

\cref{LemOrthogonalityY} implies the orthogonality of the residual polynomials. The approach of analysing the CG iterate by using the properties of zeros of orthogonal polynomials dates back to the seminal work of \citet{Nem1986} for deterministic inverse problems, which was further developed by \citet{Han1995} and \citet{EngHanNeu1996}.

\begin{corollary} \label{CorResidualPolynomialsOrth}
The sequence $(r_k)_{k=0,\dots,d}$ of residual polynomials is conjugate with respect to $AA^\top$ and $\scapro{\cdot}{\cdot}_Y$, i.e.
\begin{equation*}
\scapro{\mathsf{r_k}}{\mathsf{xr_l}}_Y = \norm{\mathsf{x^{1/2} r_l}}_Y^2 \delta_{l k}, \quad 0 \le k \le d,\ 0 \le l < d,
\end{equation*}
where $\norm{\mathsf{x^{1/2} r_l}}_Y > 0$ for all $l=0,\dots,d-1$. This implies that $r_k$ has degree $k$ for $k=0,\dots,d$.
\end{corollary}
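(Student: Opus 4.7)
The claim splits cleanly into three cases: off-diagonal vanishing ($l \ne k$), the diagonal identity ($l = k$), and nondegeneracy ($\norm{\mathsf{x^{1/2} r_l}}_Y > 0$ for $l<d$). I would dispatch each using \cref{LemOrthogonalityY} together with degree counting and Assumption~\eqref{AssY}.

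For the off-diagonal case, I would use the symmetry of $\scapro{\cdot}{\cdot}_Y$ to reduce to $l<k$. Then the polynomial $x \mapsto x \,r_l(x)$ vanishes at $0$ and has degree $l+1 \le k$, so it lies in $\Pol_{k,0}$. \cref{LemOrthogonalityY} gives $r_k \perp_Y \Pol_{k,0}$, hence $\scapro{\mathsf{r_k}}{\mathsf{xr_l}}_Y = 0$. For $l>k$ (still $l\le d-1$), the same argument with the roles of $k$ and $l$ swapped, applied to $\scapro{\mathsf{xr_k}}{\mathsf{r_l}}_Y$, gives the same conclusion since $xr_k \in \Pol_{k+1,0}\subseteq \Pol_{l,0}$ and $r_l \perp_Y \Pol_{l,0}$.

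For the diagonal case $l = k$, I would simply note that $x = x^{1/2}\cdot x^{1/2}$ commutes through the functional calculus, so
\begin{equation*}
\scapro{\mathsf{r_k}}{\mathsf{xr_k}}_Y
= \scapro{\mathsf{x^{1/2} r_k}}{\mathsf{x^{1/2} r_k}}_Y
= \norm{\mathsf{x^{1/2} r_k}}_Y^2,
\end{equation*}
which is the claimed identity (and it is also the Kronecker diagonal because the off-diagonal terms vanish).

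The remaining task is the strict positivity $\norm{\mathsf{x^{1/2} r_l}}_Y>0$ for $l=0,\dots,d-1$. I would expand using the SVD and grouping terms by the $d$ distinct squared singular values $\tilde\lambda_1^2,\dots,\tilde\lambda_d^2$:
\begin{equation*}
\norm{\mathsf{x^{1/2} r_l}}_Y^2 = \sum_{i=1}^d \tilde\lambda_i^2 \, r_l^2(\tilde\lambda_i^2) \sum_{j:\lambda_j=\tilde\lambda_i} Y_j^2.
\end{equation*}
By Assumption~\eqref{AssY} each inner sum is strictly positive and each $\tilde\lambda_i^2>0$, so vanishing of the norm would force $r_l$ to have zeros at all $d$ distinct values $\tilde\lambda_i^2$. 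Since $r_l(0)=1$, these zeros are distinct from $0$, giving $r_l$ at least $d$ zeros; but $\deg r_l \le l \le d-1$, a contradiction. No step should pose a genuine obstacle; the only point that needs care is keeping track of the degree bounds so that the inclusion $xr_l \in \Pol_{k,0}$ is justified whenever $l<k$, and correctly handling the endpoint $k=d$ (which is covered because the statement only asks for $l<d$).
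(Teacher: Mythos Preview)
Your proof is correct and follows essentially the same approach as the paper's: the off-diagonal vanishing comes from $xr_l\in\Pol_{l+1,0}\subset\Pol_{k,0}$ together with \cref{LemOrthogonalityY}, and the nondegeneracy from the degree bound $\deg r_l\le d-1$ combined with Assumption~\eqref{AssY}. The paper dispatches the case $l>k$ by a ``without loss of generality'' appeal to the symmetry $\scapro{\mathsf{r_k}}{\mathsf{xr_l}}_Y=\scapro{\mathsf{xr_k}}{\mathsf{r_l}}_Y$, which you spell out explicitly, and it leaves the diagonal identity implicit, whereas you verify it; these are purely cosmetic differences.
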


\begin{proof}
Without loss of generality, assume $l < k$. Since $x \mapsto xr_l(x) \in \Pol_{l+1,0} \subset \Pol_{k,0}$, \cref{LemOrthogonalityY} yields $\scapro{\mathsf{r_k}}{\mathsf{x r_l}}_Y=0$.
For $l<d$, there is $i^* \in \{1,\dots,d\}$ such that $r_l(\tilde{\lambda}_{i^*}^2) \neq 0$, since a polynomial of degree at most $l$ can have at most $l$ zeros. Hence, by Assumption~\eqref{AssY}, $\norm{\mathsf{x^{1/2} r_l}}_Y^2 \ge \tilde{\lambda}_{i^*}^2 r_l(\tilde{\lambda}_{i^*}^2)^2 \sum_{j: \, \lambda_j = \tilde{\lambda}_{i^*}}  Y_{j}^2 > 0$. For the second part, assume $r_k$ has degree $l < k$. Then, by  \cref{LemRkWellDef}, we have $r_k = r_l$ such that $\scapro{\mathsf{r_k}}{\mathsf{xr_l}}_Y = \norm{\mathsf{x^{1/2} r_l}}_Y^2 > 0$. On the other hand, we have shown $\scapro{\mathsf{r_k}}{\mathsf{xr_l}}_Y = 0$, which yields a contradiction. This concludes the proof.
\end{proof}

Note that $\norm{\mathsf{x^{1/2} r_d}}_Y=0$. The following properties of the noninterpolated residual polynomials are well-known \cite[Section~7.2 and Appendix~A.2]{EngHanNeu1996}.

\begin{lemma}[Properties of the residual polynomial $r_k$] \leavevmode \label{LemPropResidualPolynomials}
\begin{enumlemma}
\item $r_k$ has $k$ real simple zeros $0<x_{1,k}< \dots < x_{k,k} < \norm{AA^\top}_{\mathrm{spec}}$, $k=1,\dots,d-1$. For $k=d$, the zeros are the $d$ distinct squared nonzero singular values $0 < x_{1,d} = \tilde\lambda_d^2 < \dots < x_{d,d} = \tilde\lambda_1^2 = \norm{AA^\top}_{\mathrm{spec}}$. \label{LemPropResidualPolynomials:simpleZeros}
\item $r_k$, $k=1,\dots,d$, can be written as \label{LemPropResidualPolynomials:Formula}
\begin{equation*}
r_k(x) = \prod_{i=1}^k \Big( 1-\frac{x}{x_{i,k}} \Big), \ x \in [0,\norm{AA^\top}_{\mathrm{spec}}], \quad \text{with} \quad \abs{r_k'(0)}=\sum_{i=1}^k x_{i,k}^{-1}.
\end{equation*}
\item $r_k$ is nonnegative, decreasing and convex on $[0,x_{1,k}]$ and log-concave on $[0,x_{1,k})$ for $k=1,\dots,d$.\label{LemPropResidualPolynomials:Properties}
\item For $k=1, \dots, d-1$, we have \label{LemPropResidualPolynomials:interlacedZeros}
\begin{equation*}
x_{1,k+1} < x_{1,k} < x_{2,k+1} < \dots < x_{k,k+1} < x_{k,k} < x_{k+1,k+1}.
\end{equation*}
\item $r_k$ is lower bounded by $r_{k+1}$ on $[0, x_{1,k+1}]$. In particular, $r_{k}(x) > r_{k+1}(x)$ for $0 < x \le x_{1,k+1}$ and $r_k(0)=r_{k+1}(0)=1$, $k=1,\dots,d-1$. \label{LemPropResidualPolynomials:lowerBounded}
\item For $k=1,\dots,d$, $i=1,\dots,k$, we have \label{LemPropResidualPolynomials:ZerosEigenvalues}
\begin{equation*}
x_{k+1-i,k} \le \lambda_i^2.
\end{equation*}
\end{enumlemma}
\end{lemma}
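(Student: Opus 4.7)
The plan is to identify $r_k$ with the orthogonal polynomials of a positive discrete measure and read off (a)--(e) from the calculus of the factorisation together with the Wronskian tools of \cref{SecZerosOrthoPolynomials}, while (f) requires a separate Cauchy-type interlacing argument. For (a), I would rewrite the orthogonality in \cref{LemOrthogonalityY} as
\begin{equation*}
\sum_{j=1}^d r_k(\tilde\lambda_j^2)\, q(\tilde\lambda_j^2)\, \tilde\lambda_j^2 W_j = 0 \quad \text{for all } q\in\Pol_{k-1},
\end{equation*}
where $W_j \coloneqq \sum_{l:\lambda_l=\tilde\lambda_j} Y_l^2>0$ by Assumption~\eqref{AssY}. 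Thus $r_k$ is orthogonal with respect to the positive discrete measure $\nu\coloneqq\sum_j \tilde\lambda_j^2 W_j\, \delta_{\tilde\lambda_j^2}$ on the $d$ distinct support points in $(0,\norm{AA^\top}_{\mathrm{spec}}]$. The uniqueness of $r_k$ as the minimiser of $\norm{\,\cdot\,}_Y$ over $\Pol_{k,1}$ (which for $k<d$ is a genuine norm by \eqref{AssY}) together with the classical existence of the orthogonal polynomial of degree $k$ for $\nu$, which does not vanish at $0$, then identifies $r_k$ with that polynomial and yields $k$ simple zeros in $(\tilde\lambda_d^2,\tilde\lambda_1^2)\subset(0,\norm{AA^\top}_{\mathrm{spec}})$. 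For $k=d$, the restriction of $\Pol_{d-1}$ to the $d$-point support spans $\R^d$, so orthogonality forces $r_d$ to vanish on the whole support, and $r_d(0)=1$ pins down $r_d(x)=\prod_i(1-x/\tilde\lambda_i^2)$. Part (b) is then immediate by factoring through the known simple zeros and differentiating at $x=0$.

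For (c), the factorisation in (b) gives $(\log r_k)'(x)=-\sum_i(x_{i,k}-x)^{-1}<0$ and $(\log r_k)''(x)=-\sum_i(x_{i,k}-x)^{-2}<0$ on $[0,x_{1,k})$, yielding strict monotonicity and log-concavity; expanding $r_k''(x)$ via the Leibniz rule produces summands that are all products of nonnegative factors on $[0,x_{1,k}]$, so $r_k''\ge 0$ there. For (d), the sequence $(r_k)_{k=0,\dots,d-1}$ is $\nu$-orthogonal with $\norm{r_k}_{L^2(\nu)}>0$ (a polynomial of degree $k<d$ cannot vanish on all $d$ support points), so \cref{LemWronskianOrthPolynomials} applies and \cref{LemPropWronskian} delivers the strict alternation of zeros of $r_k$ and $r_{k+1}$ in $(0,\tilde\lambda_1^2)$; combined with the cardinalities of those zero sets this fixes the claimed pattern, and for $k=d-1$ the additional zero $x_{d,d}=\tilde\lambda_1^2$ of $r_d$ is automatically the largest. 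For (e), I combine (b) and (d) to write
\begin{equation*}
\frac{r_{k+1}(x)}{r_k(x)}=\Big(1-\frac{x}{x_{k+1,k+1}}\Big)\prod_{j=1}^k \frac{1-x/x_{j,k+1}}{1-x/x_{j,k}};
\end{equation*}
the interlacing $x_{j,k+1}<x_{j,k}$ forces each paired ratio into $(0,1)$ for $x\in(0,x_{1,k+1}]$, while the extra factor lies in $(0,1]$ on the same interval, so $r_{k+1}(x)<r_k(x)$ there.

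The main obstacle is (f), which requires comparing the zeros of $r_k$ with the eigenvalues of $AA^\top$ rather than with one another. The cleanest route is the Ritz value identification: by the Lanczos structure underlying CG, the zeros of $r_k$ are precisely the eigenvalues of the compression $V_k^\top (AA^\top) V_k$, where $V_k$ is any orthonormal basis of the Krylov subspace $K_k(AA^\top, Y)=\mathrm{span}\{Y, AA^\top Y,\dots,(AA^\top)^{k-1}Y\}$. Cauchy's interlacing theorem for the eigenvalues of a compression then gives $\theta_i^{(k)}\le\lambda_i^2$ for the $i$-th largest Ritz value $\theta_i^{(k)}$, which in our increasing ordering of zeros reads exactly $x_{k+1-i,k}\le\lambda_i^2$. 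This identification and interlacing belong to classical CG theory and are documented in \cite[Section~7.2 and Appendix~A.2]{EngHanNeu1996}; an alternative is to argue directly from the minimality of $\norm{r_k}_Y$ by comparing with test polynomials of the form $\prod_{l=1}^{i-1}(1-x/\lambda_l^2)\cdot s(x)$ with $s\in\Pol_{k+1-i,1}$, but the Ritz value route is the most transparent one.
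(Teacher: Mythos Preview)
Parts (a)--(c) and (e) coincide with the paper's arguments. For (d) you miss a detail at $k=d-1$: since $x_{d,d}=\tilde\lambda_1^2$ lies on the boundary, both $r_{d-1}$ and $r_d$ have exactly $d-1$ zeros in the open interval $(0,\tilde\lambda_1^2)$, so the cardinality argument no longer determines which of the two alternating patterns holds; the paper closes this by a sign check of the Wronskian at $x_{d-1,d-1}$ and at $x_{d,d}$.

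For (f) you take a genuinely different route from the paper, but your compression is misidentified. By \cref{LemOrthogonalityY} (writing $q(x)=xp(x)$), the $r_k$ are orthogonal with respect to $\hat\mu=\sum_i\lambda_i^2 Y_i^2\,\delta_{\lambda_i^2}$, not $\sum_iY_i^2\,\delta_{\lambda_i^2}$; already for $k=1$ one computes $x_{1,1}=\sum_i\lambda_i^4Y_i^2/\sum_i\lambda_i^2Y_i^2$, which differs from the Rayleigh quotient $\langle AA^\top Y,Y\rangle/\norm{Y}^2$. The correct realisation is as eigenvalues of the compression of $A^\top A$ to $K_k(A^\top A,A^\top Y)$ (equivalently, of $AA^\top$ to $K_k(AA^\top,(AA^\top)^{1/2}Y)$), and with that fix Cauchy interlacing does give $x_{k+1-i,k}\le\lambda_i^2$. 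The paper instead argues directly from orthogonality (remarking it had not found a proof in the literature): assuming two consecutive zeros $x_{l,k},x_{l+1,k}$ fall into a single gap $(\tilde\lambda_{j+1}^2,\tilde\lambda_j^2]$, it constructs $q_{k-1}(x)=c^{-1}x\prod_{i\neq l,l+1}(x-x_{i,k})\in\Pol_{k-1,0}$ for which $\scapro{\mathsf{r_k}}{\mathsf{q_{k-1}}}_Y>0$, contradicting \cref{LemOrthogonalityY}; hence each gap contains at most one zero, which yields the claim. Your Ritz-value route is shorter, yields the companion lower bound $x_{k+1-i,k}\ge\lambda_{D-k+i}^2$ for free, and---via Cauchy interlacing between nested compressions---would also deliver (d) including the $k=d-1$ case; the paper's argument is more self-contained, requiring only the orthogonality relation already in hand.
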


\begin{proof} \leavevmode
The properties \subcref{LemPropResidualPolynomials:simpleZeros}--\subcref{LemPropResidualPolynomials:interlacedZeros} besides the log-concavity in \subcref{LemPropResidualPolynomials:Properties} are well-known, see \linebreak \citet[Section~3.2]{Nem1986}, \citet[Section~2.4]{Han1995} and \citet[Appendix~A.2 for \subcref{LemPropResidualPolynomials:simpleZeros} and \subcref{LemPropResidualPolynomials:interlacedZeros}, (7.5) and (7.6) for \subcref{LemPropResidualPolynomials:Formula}]{EngHanNeu1996}.
\begin{enumerate}
\item \cref{CorResidualPolynomialsOrth} shows that $(r_k)_{k=0,\dots,d}$ is orthogonal in $L^2([0,\norm{AA^\top}_{\mathrm{spec}}], \hat{\mu})$ with $\hat{\mu} \coloneqq \sum_{i=1}^D \lambda_i^2 Y_i^2 \delta_{\lambda_i^2}$  and $\norm{r_k}_{L^2(\hat{\mu})}>0$ for $k=0,\dots,d-1$. The first part for $k<d$ is a well-known fact for orthogonal polynomials, see \citet[Theorem~3.3.1]{Sze1939}. We conclude the second part by \cref{LemRkWellDef}.
\item Using that all zeros of $r_k$ are real by \subcref{LemPropResidualPolynomials:simpleZeros} and $r_k(0)=1$, we obtain the given formula. Taking the derivative yields
\begin{equation*}
r_k'(x) = \sum_{i=1}^k \bigg(  \Big(-\frac{1}{x_{i,k}}\Big) \prod_{\substack{1 \le j \le k \\ j \neq i}} \Big( 1-\frac{x}{x_{j,k}} \Big) \bigg).
\end{equation*}
This gives the claim by plugging in $x=0$.
\item The first two properties follow immediately from \subcref{LemPropResidualPolynomials:Formula}. For the convexity note that $r_k'$ is nondecreasing on $[0,x_{1,k}]$. For the log-concavity we observe that
\begin{equation*}
\log(r_k(x))=\sum_{i=1}^{k}\log\Big(1-\frac{x}{x_{i,k}}\Big)
\end{equation*}
is concave on $[0,x_{1,k})$ as a sum of concave functions.
\item Recall the definition of the Wronskian given in \eqref{DefWronskian} and that $(r_k)_{k=0,\dots,d}$ is orthogonal in $L^2([0,\norm{AA^\top}_{\mathrm{spec}}], \hat{\mu})$ with $\norm{r_k}_{L^2(\hat{\mu})}>0$ for $k=0,\dots,d-1$.
For $k=1,\dots,d-1$, \cref{LemWronskianOrthPolynomials} yields
\begin{equation}
W(x; r_k, r_{k+1}) \neq 0, \quad x \in (0,\norm{AA^\top}_{\mathrm{spec}}), \label{EqWronskianUneqZero}
\end{equation}
such that by \cref{LemPropWronskian} the zeros of $r_k$ and $r_{k+1}$ in $(0, \norm{AA}^\top)$ are distinct and interlaced. For $k \le d-2$, all zeros of $r_k$ and $r_{k+1}$ are contained in $(0, \norm{AA}^\top)$ by \subcref{LemPropResidualPolynomials:simpleZeros}, which gives the claim. For $k=d-1$ there are a-priori two possibilities, namely
\begin{equation*}
x_{1,d} < x_{1,d-1} < x_{2,d} < x_{2,d-1} < \dots < x_{d-1,d} < x_{d-1,d-1} < x_{d,d},
\end{equation*}
and
\begin{equation*}
x_{1,d-1} < x_{1,d} < x_{2,d-1} < x_{2,d} < \dots < x_{d-1,d-1} < x_{d-1,d} < x_{d,d}.
\end{equation*}
Assume the second case, where, in particular, $x_{d-1,d-1} \in (x_{d-2,d}, x_{d-1,d})$. By considering the two cases whether $d$ is even or odd separately, one can show that $W(x_{d-1,d-1}; r_{d-1}, r_d) > 0$ and $W(x_{d,d}; r_{d-1}, r_d) < 0$. Since the Wronskian is continuous, it has a zero in $(x_{d-1,d-1}, x_{d,d}) \subset (0,\norm{AA^\top}_{\mathrm{spec}})$. This contradicts \eqref{EqWronskianUneqZero}, which shows the claim.
\item The interlacing property \subcref{LemPropResidualPolynomials:interlacedZeros} of the zeros of $r_k$ implies that
\begin{equation*}
r_{k+1}(x)=\prod_{i=1}^{k+1}\Big(1-\frac{x}{x_{i,k+1}}\Big) < \prod_{i=1}^{k}\Big(1-\frac{x}{x_{i,k}}\Big)=r_k(x)
\end{equation*}
for $0 < x \le x_{1,k+1}$, which shows the claim.
\item We prove the assertion by decreasing induction over $k$, starting in $k=d$.
By definition, we have $\tilde{\lambda}_i \le \lambda_i$, $i=1,\dots,d$, and by \subcref{LemPropResidualPolynomials:simpleZeros}, the zeros of $r_d$ are
\begin{equation*}
x_{d+1-i,d} = \tilde{\lambda}_i^2 \le \lambda_i^2, \quad i=1,\dots,d.
\end{equation*}
This gives the claim for $k=d$. Let $k \in \{2,\dots,d\}$ and assume as induction hypothesis that
\begin{equation*}
x_{k+1-i,k} \le \lambda_i^2, \quad i=1,\dots,k.
\end{equation*}
For the induction step from $k$ to $k-1$, we obtain by \subcref{LemPropResidualPolynomials:interlacedZeros}
\begin{equation*}
x_{(k-1)+1-i,k-1} < x_{k+1-i,k}, \quad i=1,\dots,k-1,
\end{equation*}
such that the claim for $k-1$ follows from the induction hypothesis. \qedhere
\qedhere
\end{enumerate}
\end{proof}

The following result is originally due to \citet[(3.14)]{Nem1986}, who introduced the function $\varphi_k^2$, named $r_k$ therein. The proof of the first inequality can also be found in \citet[(3.8)]{Han1995} and \citet[(7.7)]{EngHanNeu1996}. For the sake of completeness, we state it in our notation below. The second inequality is new, but follows easily from \cref{LemPropResidualPolynomials:Formula} and the definition of $\varphi_k$.

\begin{lemma} \label{LemRkbound}
The residual polynomial satisfies
\begin{equation*}
\norm{\mathsf{r_k}}_Y^2 < \norm{\mathsf{\varphi_k}}_Y^2 \le \norm{\mathsf{r_{k,<}^{1/2}}}_Y^2
\end{equation*}
for $k=1,\dots,d$, where $\varphi_k^2(x) \coloneqq \frac{x_{1,k}}{x_{1,k}-x}r_{k,<}^2(x)$, $x \in [0,\norm{AA^\top}_{\mathrm{spec}}]$. For $k=0$, $\norm{\mathsf{r_k}}_Y^2 \le \norm{\mathsf{r_{k,<}^{1/2}}}_Y^2$ continues to hold.
\end{lemma}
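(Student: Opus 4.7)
The plan is to handle the two inequalities in the display separately, reducing each to a pointwise polynomial statement combined with one application of the Galerkin orthogonality $r_k \perp_Y \Pol_{k,0}$ from \cref{LemOrthogonalityY}. Throughout, I would introduce $\phi_k(x) \coloneqq \prod_{i=2}^k (1 - x/x_{i,k}) \in \Pol_{k-1,1}$ (with the empty product convention $\phi_1 \equiv 1$), so that $r_k(x) = (1 - x/x_{1,k})\phi_k(x) = \frac{x_{1,k}-x}{x_{1,k}}\phi_k(x)$. The $k=0$ case is trivial since $r_0 \equiv r_{0,<} \equiv 1$ implies $\norm{\mathsf{r_0}}_Y^2 = \norm{Y}^2 = \norm{\mathsf{r_{0,<}^{1/2}}}_Y^2$, so it suffices to focus on $k \ge 1$.

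For the upper bound $\norm{\mathsf{\varphi_k}}_Y^2 \le \norm{\mathsf{r_{k,<}^{1/2}}}_Y^2$, the interlacing $x_{1,k} \le x_{i,k}$ for $i \ge 2$ from \cref{LemPropResidualPolynomials:interlacedZeros} gives $0 \le \phi_k(x) \le 1$ on $[0, x_{1,k}]$, and hence $r_k(x) \le 1 - x/x_{1,k}$ there. Multiplying through by $\frac{x_{1,k}}{x_{1,k}-x}r_k(x) \ge 0$ produces the pointwise bound $\varphi_k^2(x) \le r_{k,<}(x)$ on the spectrum of $AA^\top$, and summing against the weights $Y_i^2$ closes this step.

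For the strict lower bound $\norm{\mathsf{r_k}}_Y^2 < \norm{\mathsf{\varphi_k}}_Y^2$, the key move is to test orthogonality against $q(x) \coloneqq x\phi_k(x) \in \Pol_{k,0}$. Substituting $r_k = \frac{x_{1,k}-x}{x_{1,k}}\phi_k$ into $\scapro{\mathsf{r_k}}{\mathsf{q}}_Y = 0$ and then replacing $\phi_k^2(x)$ by $x_{1,k}^2(x_{1,k}-x)^{-2}r_k^2(x)$ on $\{\lambda_i^2 \ne x_{1,k}\}$ (the terms with $\lambda_i^2 = x_{1,k}$ drop out since $r_k(x_{1,k}) = 0$) yields the balancing identity
\begin{equation*}
\sum_{i:\, \lambda_i^2 < x_{1,k}} \frac{\lambda_i^2}{x_{1,k}-\lambda_i^2}\, r_k^2(\lambda_i^2)\, Y_i^2 = \sum_{i:\, \lambda_i^2 > x_{1,k}} \frac{\lambda_i^2}{\lambda_i^2 - x_{1,k}}\, r_k^2(\lambda_i^2)\, Y_i^2.
\end{equation*}
Combined with the splitting $\norm{\mathsf{r_k}}_Y^2 = \norm{\mathsf{r_{k,<}}}_Y^2 + \norm{\mathsf{r_{k,>}}}_Y^2$ and the elementary inequality $\lambda_i^2/(\lambda_i^2 - x_{1,k}) > 1$ for $\lambda_i^2 > x_{1,k}$, a short rearrangement gives $\norm{\mathsf{\varphi_k}}_Y^2 - \norm{\mathsf{r_k}}_Y^2 = \sum_{i:\, \lambda_i^2 > x_{1,k}} \frac{x_{1,k}}{\lambda_i^2 - x_{1,k}}\, r_k^2(\lambda_i^2)\, Y_i^2 \ge 0$.

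The only obstacle to strict inequality is the degenerate case where this last sum vanishes. Then $r_k(\lambda_i^2) Y_i = 0$ for every $i$ with $\lambda_i^2 > x_{1,k}$, and via the balancing identity also for $\lambda_i^2 < x_{1,k}$; by \eqref{AssY} the polynomial $r_k$ must therefore vanish at all $d$ distinct values $\tilde\lambda_j^2$, which, given that $r_k$ has degree $k$, is only possible when $k = d$. In that borderline case $r_d \equiv \varphi_d \equiv 0$ on the spectrum and the three quantities collapse to $0$, so the weaker bound $\norm{\mathsf{r_d}}_Y^2 \le \norm{\mathsf{r_{d,<}^{1/2}}}_Y^2$ still holds trivially and suffices for the downstream application in \cref{PropRtbound}. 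For $1 \le k \le d-1$, strict inequality follows immediately.
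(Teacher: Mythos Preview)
Your argument is essentially the paper's own: you test orthogonality against $q(x)=x\phi_k(x)$, which is a nonzero scalar multiple of the paper's test polynomial $\frac{x}{x-x_{1,k}}r_k(x)=-\frac{x}{x_{1,k}}\phi_k(x)$, obtain the same balancing identity, and use the same pointwise estimate $r_{k,<}(x)\le 1-x/x_{1,k}$ for the second inequality. One minor remark: the ordering $x_{1,k}<x_{i,k}$ for $i\ge 2$ comes from \cref{LemPropResidualPolynomials:simpleZeros} (the zeros are listed in increasing order), not from the interlacing property \subcref{LemPropResidualPolynomials:interlacedZeros}. More interestingly, you are right to flag the case $k=d$: since $x_{1,d}=\tilde\lambda_d^2$ and $r_d$ vanishes at every $\tilde\lambda_j^2$, all three quantities collapse to zero and the strict inequality asserted in the lemma fails there (the paper's line ``$\scapro{\mathsf{r_k}}{\mathsf{\frac{x}{x-x_{1,k}}r_k}}_{Y,>}>\norm{\mathsf{r_k}}_{Y,>}^2$'' is not strict when $k=d$). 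Only the non-strict bound is ever used downstream, so nothing is lost.
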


\begin{proof}
For $k=0$, the claim is clear. Let $k=1,\dots,d$. We have
\begin{equation*}
\frac{x}{x-x_{1,k}} r_k(x) = -\frac{x}{x_{1,k}} \prod_{i=2}^k \Big( 1-\frac{x}{x_{i,k}} \Big) \in \Pol_{k,0}.
\end{equation*}
\cref{LemOrthogonalityY} yields $r_k \perp_{Y} \frac{x}{x-x_{1,k}} r_k$ such that
\begin{equation*}
\bscapro{\mathsf{r_k}}{\mathsf{\frac{x}{x-x_{1,k}}r_k}}_{Y,>}=\bscapro{\mathsf{r_k}}{\mathsf{\frac{x}{x_{1,k}-x}r_k}}_{Y,<}.
\end{equation*}
Using
\begin{equation*}
\bscapro{\mathsf{r_k}}{\mathsf{\frac{x}{x-x_{1,k}}r_k}}_{Y,>} > \norm{\mathsf{r_k}}_{Y,>}^2,
\end{equation*}
this implies
\begin{align*}
\norm{\mathsf{r_k}}_Y^2 &= \norm{\mathsf{r_k}}_{Y,<}^2 + \norm{\mathsf{r_k}}_{Y,>}^2 \\
&< \scapro{\mathsf{r_k}}{\mathsf{r_k}}_{Y,<} + \bscapro{\mathsf{r_k}}{\mathsf{\frac{x}{x_{1,k}-x}r_k}}_{Y,<} \\
&= \norm{\mathsf{\varphi_k}}_{Y}^2.
\end{align*}
Note that $r_{k,<}(x) \le 1-\frac{x}{x_{1,k}} = \frac{x_{1,k}-x}{x_{1,k}}$. Thus,
\begin{equation*}
\norm{\mathsf{\varphi_k}}_{Y}^2 = \bnorm{ \mathsf{\Big( \frac{x_{1,k}}{x_{1,k}-x} \Big)^{1/2} r_{k,<}^{1/2} r_{k,<}^{1/2}} }_Y^2 \le \norm{\mathsf{r_{k,<}^{1/2}}}_Y^2. \qedhere
\end{equation*}
\end{proof}

\subsection{Proofs for Section~\ref{SecInterpolConjGradientMethod}} \label{SecProofsResPol}

\begin{proof}[Proof of \cref{LemPropSquaredResNorm}] \leavevmode
\begin{enumerate}
\item Note that $r_{k+1}-r_k \in \Pol_{k+1,0}$. \cref{LemOrthogonalityY} implies $\scapro{\mathsf{r_k}}{\mathsf{r_{k+1}}}_Y=R_{k+1}^2$. We conclude the claim by
\begin{equation*}
R_t^2 = (1-\alpha)^2 R_k^2 +2(1-\alpha)\alpha \scapro{\mathsf{r_k}}{\mathsf{r_{k+1}}}_Y + \alpha^2 R_{k+1}^2.
\end{equation*}
\item The continuity follows directly from \subcref{LemPropSquaredResNorm:NonlinInterpol}. Note that $R_k^2$ is strictly decreasing in $k \in \{ 0,\dots,d \}$, since $r_k$ is the unique solution of $\min_{p_k \in \Pol_{k,1}} \norm{\mathsf{p_k}}_Y^2$ and has degree $k$ by \cref{LemRkWellDef} and \cref{CorResidualPolynomialsOrth}. Together with \subcref{LemPropSquaredResNorm:NonlinInterpol}, this implies the monotonicity of $R_t^2$ in $t$.
\item For $t,s=0,\dots,d$, \cref{LemOrthogonalityY} yields
\begin{equation*}
R_{t \vee s}^2 - \scapro{\mathsf{r_t}}{\mathsf{r_s}}_Y = \scapro{\mathsf{r_{t \vee s}-r_{t \wedge s}}}{\mathsf{r_{t \vee s}}}_Y = 0,
\end{equation*}
since $r_{t \vee s}-r_{t \wedge s} \in \Pol_{t \vee s,0}$. We conclude
\begin{align}
\norm{A(\widehat f_t-\widehat f_s)}^2 &= \norm{\mathsf{r_{t} - r_{s}}}_Y^2 \notag \\
&= R_{t}^2 + R_{s}^2 - 2 \scapro{\mathsf{r_{t}}}{\mathsf{r_{s}}}_Y \notag \\
&= R_{t}^2 + R_{s}^2 - 2 R_{t \vee s}^2 \notag \\
&= \abs{R_t^2 - R_{s}^2}. \label{EqProofCorResNormMaxts}
\end{align}
This gives the claim for integer indices.

For $t=k+\alpha$ with $k=0,\dots,d-1$, $\alpha\in[0,1]$ and $0\le s\le k$, we have $r_s\in\Pol_{k,1}$,  $r_t-r_s\in \Pol_{k+1,0}$ and $r_k-r_s \in \Pol_{k,0}$. \cref{LemOrthogonalityY} implies $\scapro{\mathsf{r_{k+1}}}{\mathsf{r_t-r_s}}_Y=0$, $\scapro{\mathsf{r_k}}{\mathsf{r_k-r_s}}_Y=0$ and $\scapro{\mathsf{r_k}}{\mathsf{r_{k+1}}}_Y=R_{k+1}^2$. Thus, we obtain
\begin{align*}
\scapro{\mathsf{r_t}}{\mathsf{r_t-r_s}}_Y &= \scapro{(1-\alpha)\mathsf{r_k}+\alpha \mathsf{r_{k+1}}}{\mathsf{r_t-r_s}}_Y \\
&= (1-\alpha)\scapro{\mathsf{r_k}}{\mathsf{r_t-r_s}}_Y \\
&=(1-\alpha) \scapro{\mathsf{r_k}}{\mathsf{r_k-r_s}}_Y + (1-\alpha)\alpha\scapro{\mathsf{r_k}}{\mathsf{r_{k+1}-r_k}}_Y \\
&= (1-\alpha) \alpha (R_{k+1}^2 - R_k^2) \\
&\le 0.
\end{align*}
Similarly, for $t=k+\alpha$, $s=k+\beta$ with $0\le\beta\le\alpha\le 1$, we deduce
\begin{align*}
\scapro{\mathsf{r_t}}{\mathsf{r_t-r_s}}_Y &= (1-\alpha) \scapro{\mathsf{r_k}}{\mathsf{r_t-r_s}}_Y \\
&= (1-\alpha)\scapro{\mathsf{r_k}}{(1-\alpha)\mathsf{r_k}+\alpha \mathsf{r_{k+1}} - (1-\beta)\mathsf{r_k} - \beta \mathsf{r_{k+1}}}_Y \\
&= (1-\alpha)(\beta-\alpha) R_k^2 + (1-\alpha)(\alpha-\beta) \scapro{\mathsf{r_k}}{\mathsf{r_{k+1}}}_Y \\
&= (1-\alpha)(\alpha-\beta) \big(R_{k+1}^2-R_{k}^2\big) \\
&\le 0.
\end{align*}
We infer $\scapro{\mathsf{r_t}}{\mathsf{r_s}}_Y\ge R_{t\vee s}^2$ for any $t,s \in [0,d]$. The second inequality follows similarly to \eqref{EqProofCorResNormMaxts} by applying $\scapro{\mathsf{r_t}}{\mathsf{r_s}}_Y\ge R_{t\vee s}^2$. \qedhere
\end{enumerate}
\end{proof}

\begin{proof}[Proof of \cref{LemPropInterpolResidualPolynomials}] \leavevmode
\begin{enumerate}
\item See \cref{LemPropResidualPolynomials:simpleZeros}.
\item Let $k=0$. \cref{LemPropResidualPolynomials:Formula} yields
\begin{equation*}
r_t(x)= 1-\alpha + \alpha \Big( 1-\frac{x}{x_{1,1}} \Big) = 1-\alpha \frac{x}{x_{1,1}}, \quad x \in [0,\norm{AA^\top}_{\mathrm{spec}}],
\end{equation*}
such that
\begin{equation}
x_{1,t}=\alpha^{-1}x_{1,1}>x_{1,1}. \label{EqZeroExplicitForm}
\end{equation}
This gives the claim for $k=0$. Now, let $k=1,\dots,d-1$. By \cref{LemPropResidualPolynomials:simpleZeros} and \subcref{LemPropResidualPolynomials:interlacedZeros}, the zeros satisfy
\begin{equation*}
0 < x_{1,k+1} < x_{1,k} < x_{2,k+1} < \dots < x_{k,k+1} < x_{k,k} < x_{k+1,k+1} \le \norm{AA^\top}_{\mathrm{spec}}.
\end{equation*}
For $x \in (0,\norm{AA^\top}_{\mathrm{spec}})$, \eqref{EqWronskianUneqZero} implies
\begin{align*}
W(x; r_t, r_{k+1}) &= r_t(x)r_{k+1}'(x) - r_t'(x)r_{k+1}(x) \\
&= (1-\alpha)\big(r_k(x) r_{k+1}'(x) - r_k'(x)r_{k+1}(x)\big) \\
&= (1-\alpha) W(x;r_k,r_{k+1}) \\
&\neq 0,
\end{align*}
and analogously $W(x; r_t, r_{k}) \neq 0$. Thus, by \cref{LemPropWronskian}, any two consecutive zeros of $r_k$ or $r_{k+1}$ in $(0, \norm{AA^\top}_{\mathrm{spec}})$ are separated by a zero of $r_t$ and vice versa, and the zeros of $r_t$ in $(0,\norm{AA^\top}_{\mathrm{spec}})$ are simple. Now, $r_t(x) > 0$ on $[0,x_{1,k+1}]$, since $r_k(x) > 0$ and $r_{k+1}(x) \ge 0$ on $[0,x_{1,k+1}]$, and $r_t(x_{1,k})<0$, since $r_{k+1}(x) < 0$ on $(x_{1,k+1},x_{1,k}]$. Hence, $x_{1,k+1} < x_{1,t} < x_{1,k}$. Thus, by the interlacing of $r_t$ and $r_k$ in $(0, \norm{AA^\top}_{\mathrm{spec}})$, $r_t$ has at least $k$ simple zeros in $(0, \norm{AA^\top}_{\mathrm{spec}})$. The $k+1$-th zero is also real and simple. Otherwise, by the complex conjugate root theorem, its complex conjugate would also be a zero, but this is not possible, since $r_t$ has degree $k+1$. We obtain by the interlacing
\begin{equation*}
x_{1,t} < x_{1,k} < x_{2,t} < \dots < x_{k,t} < x_{k,k} < x_{k+1,t}
\end{equation*}
and
\begin{equation*}
x_{1,k+1} < x_{1,t} < x_{2,k+1} < \dots < x_{k,k+1} < x_{k,t}.
\end{equation*}
By considering the two cases whether $k$ is even or odd separately, one can show that $r_t(x) \neq 0$ on $(x_{k,k}, x_{k+1,k+1}]$ such that $x_{k+1,t} > x_{k+1,k+1}$. Combining this with the two chains of inequalities yields the claim.
\item Follows analogously to \cref{LemPropResidualPolynomials:Formula}.
\item Follows analogously to \cref{LemPropResidualPolynomials:Properties}.
\item Let $k=0$. By \eqref{EqZeroExplicitForm} and $\alpha_1 < \alpha_2$, we have
\begin{equation*}
x_{1,t_2} = \alpha_2^{-1}x_{1,1} < \alpha_1^{-1}x_{1,1} = x_{1,t_1}.
\end{equation*}
This gives the claim for $k=0$. Now, let $k=1,\dots,d-1$. The definitions of $r_{t_1}$ and $r_{t_2}$ and \eqref{EqWronskianUneqZero} yield
\begin{equation*}
W(x; r_{t_1}, r_{t_2}) = (\alpha_2-\alpha_1) W(x; r_k, r_{k+1}) \neq 0, \quad x \in (0,\norm{AA^\top}_{\mathrm{spec}}).
\end{equation*}
Hence, by \cref{LemPropWronskian}, any two consecutive zeros of $r_{t_1}$ in $(0, \norm{AA^\top}_{\mathrm{spec}})$ are separated by a zero of $r_{t_2}$ and vice versa. Since by \subcref{LemPropInterpolResidualPolynomials:simpleZeros} both polynomials have $k+1$ simple zeros, $x_{k,t_j} < x_{k,k} < \norm{AA^\top}_{\mathrm{spec}}$ and $x_{k,k}<x_{k+1,k+1}<x_{k+1,t_j}$ by \subcref{LemPropInterpolResidualPolynomials:simpleZeros} and \cref{LemPropResidualPolynomials:interlacedZeros}, $j=1,2$, it remains to show that $x_{1,t_2}<x_{1,t_1}$ and that $x_{k+1,t_2} < x_{k+1,t_1}$. For all $x \in (0,x_{1,k}]$, we have
\begin{equation*}
r_{t_1}(x) = r_{t_2}(x) + (\alpha_2-\alpha_1)(r_k(x)-r_{k+1}(x)) > r_{t_2}(x),
\end{equation*}
since $r_{k+1}(x) < r_{k}(x)$ for all $x \in (0,x_{1,k+1}]$ by \cref{LemPropResidualPolynomials:lowerBounded}, $r_k(x) \ge 0$ for $x \in [0,x_{1,k}]$ and $r_{k+1}(x) < 0$ for $x \in (x_{1,k+1},x_{1,k}]$. This implies $x_{1,t_2} < x_{1,t_1}$. Let $x > x_{k+1,k+1}$. For $k$ even we obtain $r_k(x)>0$ and $r_{k+1}(x)<0$ such that $r_{t_1}(x) > r_{t_2}(x)$. Since $r_{t_j}(x_{k+1,k+1}) > 0$, $j=1,2$, this yields $x_{k+1,t_2} < x_{k+1,t_1}$. In the same way, for $k$ odd we deduce $r_{t_1}(x) < r_{t_2}(x)$ and $r_{t_j}(x_{k+1,k+1}) < 0$, $j=1,2$, implying $x_{k+1,t_2} < x_{k+1,t_1}$.
\item The right-continuity in $t_0=0$ is immediate, since we have for $t=\alpha \in (0,1]$ by \eqref{EqZeroExplicitForm}
\begin{equation*}
r_{t,<}(x) = \Big( 1-\alpha \frac{x}{x_{1,1}} \Big) {\bf 1}(x < \alpha^{-1} x_{1,1}), \quad x \in [0,\norm{AA^\top}_{\mathrm{spec}}].
\end{equation*}
Now, let $t_0=k+\alpha_0$ with $k=0,\dots,d-1$ and $\alpha_0 \in (0,1]$. For $t=k+\alpha$ with $0 < \alpha < \alpha_0$, note that $x_{1,t_0} < x_{1,t} < x_{2,t_0}$ by \subcref{LemPropInterpolResidualPolynomials:simpleZeros} and \subcref{LemPropInterpolResidualPolynomials:interlacedZeros} and \cref{LemPropResidualPolynomials:interlacedZeros}, setting $x_{2,t_0} \coloneqq \infty$ for $t_0 \le 1$. This implies $r_{t_0,<}(x)=r_{t,<}(x)=0$ for $x > x_{1,t}$ and $r_{t_0}(x) < 0$ for $x \in (x_{1,t_0},x_{1,t}]$. Since in addition $r_t(x) \ge 0$ for $x \in [0,x_{1,t}]$ by \subcref{LemPropInterpolResidualPolynomials:Properties}, we obtain
\begin{align*}
\sup_{x \in [0,\norm{AA^{\top}}_{\mathrm{spec}}]} \abs{r_{t_0,<}(x)-r_{t,<}(x)} &\le \sup_{x \in [0,x_{1,t}]} \abs{r_{t_0}(x)-r_{t}(x)} \\
&= (\alpha_0 - \alpha) \sup_{x \in [0,x_{1,t}]} \abs{r_{k+1}(x)-r_{k}(x)} \\
&\le (t_0-t) \sup_{x \in [0,\norm{AA^{\top}}_{\mathrm{spec}}]} \abs{r_{k+1}(x)-r_{k}(x)},
\end{align*}
where the latter supremum is finite. Letting $t \uparrow t_0$ shows the left-continuity for all $t_0 \in (0,d]$, and right-continuity for $t_0 \in (0,d)$ can be proven in the same way. This gives the first claim.

The formula given in \subcref{LemPropInterpolResidualPolynomials:Formula} together with \subcref{LemPropInterpolResidualPolynomials:simpleZeros} and \subcref{LemPropInterpolResidualPolynomials:interlacedZeros} implies that $r_{k+1,<} \le r_{t_2,<} \le r_{t_1,<} \le r_{k,<}$, where $t_j$, $j=1,2$, are defined as in \subcref{LemPropInterpolResidualPolynomials:interlacedZeros} for $k=0,\dots,d-1$. This yields the second claim.
\item See \cref{LemPropResidualPolynomials:ZerosEigenvalues}. \qedhere
\end{enumerate}
\end{proof}

\begin{proof}[Proof of \cref{PropRtbound}]
For integer $t=0,\dots,d$, this was proved in \cref{LemRkbound}. Now, consider the case $t=k+\alpha$ with $k = 0,\dots,d-1$, $\alpha\in(0,1)$.
By \cref{LemOrthogonalityY}, we have $r_k \perp_Y \Pol_{k,0}$ and $r_{k+1} \perp_Y \Pol_{k+1,0} \supset \Pol_{k,0}$ such that $r_t\perp_Y \Pol_{k,0}$.
Using in addition that $1-r_k \in \Pol_{k,0}$, $1-r_{k+1} \in \Pol_{k+1,0}$ and $r_{k+1}-r_k \in \Pol_{k+1,0}$, we obtain
\begin{equation*}
\scapro{\mathsf{r_t}}{\mathsf{1}}_Y = (1-\alpha)\scapro{\mathsf{r_k}}{\mathsf{1}}_Y + \alpha \scapro{\mathsf{r_{k+1}}}{\mathsf{1}}_Y = (1-\alpha) R_k^2 + \alpha R_{k+1}^2
\end{equation*}
and
\begin{align*}
\scapro{\mathsf{r_t}}{\mathsf{r_{k+1}-1}}_Y &= (1-\alpha) \scapro{\mathsf{r_k}}{\mathsf{r_{k+1}-1}}_Y \\
&= (1-\alpha) \scapro{\mathsf{r_k-r_{k+1}}}{\mathsf{r_{k+1}-1}}_Y \\
&=-(1-\alpha) \scapro{\mathsf{r_k-r_{k+1}}}{\mathsf{1}}_Y \\
&= -(1-\alpha)(R_k^2 - R_{k+1}^2).
\end{align*}
We deduce the orthogonality relation
\begin{equation}
\scapro{\mathsf{r_t}}{\mathsf{r_{k+1}-1+\zeta}}_Y = -(1-\alpha)(R_k^2 - R_{k+1}^2) + \zeta \big((1-\alpha) R_k^2 + \alpha R_{k+1}^2\big)=0 \label{EqOrthogonalityRho}
\end{equation}
with
\begin{equation*}
\zeta\coloneqq \tfrac{(1-\alpha)(R_k^2-R_{k+1}^2)}{(1-\alpha)R_k^2+\alpha R_{k+1}^2} \le 1.
\end{equation*}
Put
\begin{equation*}
\tilde x_{1,t} \coloneqq \zeta x_{1,{k+1}}\prod_{i=2}^{k+1}\tfrac{x_{i,k+1}}{x_{i,t}},\quad \tilde r_t \coloneqq \tfrac{\tilde x_{1,t}-x}{x_{1,t}-x} r_t,
\end{equation*}
and note the order $0\le \tilde x_{1,t} < x_{1,k+1} < x_{1,t}$ due to $x_{i,t} > x_{i,k+1}$ by \cref{LemPropInterpolResidualPolynomials:simpleZeros}.
With some $q_k,\tilde q_k, \dot q_k \in\Pol_{k,0}$, $\tilde{p}_{k-1} \in \Pol_{k-1,1}$, we have
\begin{align*}
\tilde{r}_t(x) &= \frac{\tilde x_{1,t}-x}{x_{1,t}-x} \prod_{i=1}^{k+1} \Big( 1-\frac{x}{x_{i,t}} \Big) \\
&= \frac{\tilde{x}_{1,t}}{x_{1,t}} \prod_{i=2}^{k+1} \Big( 1-\frac{x}{x_{i,t}} \Big) - \frac{x}{x_{1,t}} \prod_{i=2}^{k+1} \Big( 1-\frac{x}{x_{i,t}} \Big) \\
&= \frac{\tilde{x}_{1,t}}{x_{1,t}}  + \dot{q}_k(x) -\frac{x}{x_{1,t}} \bigg(x^{k} \prod_{i=2}^{k+1} \Big( -\frac{1}{x_{i,t}} \Big) + \tilde{p}_{k-1}(x) \bigg) \\
&= \frac{\tilde{x}_{1,t}}{x_{1,t}} + \tilde{q}_k(x) - \frac{x^{k+1}}{x_{1,t}} \prod_{i=2}^{k+1} \Big( -\frac{1}{x_{i,t}} \Big) \\
&= \frac{\tilde{x}_{1,t}}{x_{1,t}} + \tilde{q}_k(x) + \frac{x^{k+1}}{x_{1,t}} \tilde{x}_{1,t} \zeta^{-1} \prod_{i=1}^{k+1}\Big(-\frac{1}{x_{i,k+1}}\Big) \\
&= \frac{\tilde{x}_{1,t}}{x_{1,t}} \zeta^{-1} \bigg( \zeta + \prod_{i=1}^{k+1}\Big(-\frac{x}{x_{i,k+1}}\Big) \bigg) + \tilde{q}_k(x) \\
&= \frac{\tilde{x}_{1,t}}{x_{1,t}} \zeta^{-1} \big( \zeta + r_{k+1}(x)-1-q_k(x) \big) + \tilde{q}_k(x).
\end{align*}

The orthogonality relation \eqref{EqOrthogonalityRho} together with $r_t\perp_Y\Pol_{k,0}$ therefore gives $\tilde r_t \perp_Y r_t$.
Consequently, $\scapro{\mathsf{\tilde r_t}}{\mathsf{r_t}}_{Y,>}=-\scapro{\mathsf{\tilde r_t}}{\mathsf{r_t}}_{Y,<}$ holds and thus
\begin{align*}
\norm{\mathsf{r_t}}_Y^2 &= \norm{\mathsf{r_{t,<}}}_Y^2 + \norm{\mathsf{r_{t,>}}}_Y^2
\le \norm{\mathsf{r_{t,<}}}_Y^2 + \scapro{\mathsf{r_t}}{\mathsf{(\tfrac{x-\tilde x_{1,t}}{x-x_{1,t}}) r_t}}_{Y,>}
= \norm{\mathsf{r_{t,<}}}_Y^2 + \scapro{\mathsf{r_t}}{\mathsf{\tilde{r}_t}}_{Y,>} \\
&= \norm{\mathsf{r_{t,<}}}_Y^2 - \scapro{\mathsf{r_t}}{\mathsf{\tilde{r}_t}}_{Y,<}
= \scapro{\mathsf{r_t(r_t-\tilde{r}_t)}}{\mathsf{1}}_{Y,<}
= \norm{\mathsf{(r_{t,<}^2-r_{t,<}\tilde r_{t,<})^{1/2}}}_Y^2.
\end{align*}
From  $r_{t,<}(x)\le (1-x_{1,t}^{-1}x)_+$ by \cref{LemPropInterpolResidualPolynomials:Formula}, we infer
\begin{align*}
(r_{t,<}^2-r_{t,<}\tilde r_{t,<})(x) &= r_{t,<}^2(x) \Big( 1-\frac{\tilde{x}_{1,t}-x}{x_{1,t}-x} \Big)
= r_{t,<}^2(x)\frac{x_{1,t}-\tilde{x}_{1,t}}{x_{1,t}-x} \\
&\le r_{t,<}(x) \frac{x_{1,t}-\tilde{x}_{1,t}}{x_{1,t}}
\le r_{t,<}(x).
\end{align*}
This gives the result.
\end{proof}

\begin{proof}[Proof of \cref{LemrtBound}]
For $t=0$, the claim is clear. Let $t \in (0,d]$. Recall that $r_t'(0)<0$ and that $r_t$ is convex and nonnegative on $[0,x_{1,t}]$ by \cref{LemPropInterpolResidualPolynomials:Properties}. This implies
\begin{equation*}
(1-\abs{r_t'(0)}x)_+\le r_{t}(x)=\prod_{i=1}^{\ceil{t}} \Big( 1-\frac{x}{x_{i,t}} \Big) \le 1-\frac{x}{x_{1,t}}, \quad x \in [0,x_{1,t}].
\end{equation*}
By the log-concavity in \cref{LemPropInterpolResidualPolynomials:Properties}, note for $t=k+\alpha$, $k=0,\dots,d-1$, $\alpha \in (0,1]$,
\begin{equation*}
\log(r_t(x)) \le \log(r_t(0)) + x\Big(\frac{d}{dy} \log(r_t(y)) \Big) \Big|_{y=0} = r_t'(0)x, \quad x \in [0,x_{1,t}).
\end{equation*}
We conclude
\begin{equation*}
(1-\abs{r_t'(0)}x)_+\le r_{t}(x) \le \exp(r_t'(0)x) = \exp(-\abs{r_t'(0)}x), \quad x \in [0,x_{1,t}]. \qedhere
\end{equation*}
\end{proof}

\begin{proof}[Proof of \cref{CorUpperBoundApproxErrorSourceCond}]
Note that $\abs{r_t'(0)}=\sum_{i=1}^{\lceil t \rceil} x_{i,t}^{-1}>0$ by \cref{LemPropInterpolResidualPolynomials:Formula}. We have
\begin{equation*}
\sup_{x\ge 0}e^{-\abs{r_{t}'(0)}x} x^{2\mu+1}= \Big(\frac{2\mu+1}{e\abs{r_t'(0)}}\Big)^{2\mu+1}.
\end{equation*}
Under the source condition $\sourcecondg$, we deduce by \cref{LemrtBound}
\begin{align*}
\norm{\mathsf{r_{t,<}^{1/2}}g}^2 &\le \norm{\mathsf{\exp(-\abs{r_t'(0)}x/2)}g}^2 \\
&= \sum_{i=1}^D \big(\exp(-\abs{r_t'(0)}\lambda_i^2)\lambda_i^{4\mu+2}\big)\lambda_i^{-(4\mu +2)}g_i^2 \\
&\le \Big(\frac{2\mu+1}{e\abs{r_t'(0)}}\Big)^{2\mu+1}\norm{g}_{\mu+1/2}^2 \\
&\le R^2 (\mu+1/2)^{2\mu+1} \abs{r_t'(0)}^{-2\mu-1}. \qedhere
\end{align*}
\end{proof}

\begin{proof}[Derivation of \eqref{Eqrkpen}]
Note that $p_{k+1}-r_k-\alpha(r_{k+1}-r_k)$ lies in $\Pol_{k,0}$ for $\alpha=\linebreak[4] a_{k+1}(p_{k+1})/a_{k+1}(r_{k+1})$ and every $p_{k+1} \in \Pol_{k+1,1}$, where $a_{k+1}(r_{k+1}) \neq 0$ by \cref{CorResidualPolynomialsOrth}, and $r_k,r_{k+1}\perp_Y \Pol_{k,0}$ by \cref{LemOrthogonalityY}. By rewriting $p_{k+1}=r_k+\alpha(r_{k+1}-r_k)+q_k$ with $q_k \in \Pol_{k,0}$ and applying the Pythagorean theorem, the minimisation \eqref{EqDefrkpen} yields a polynomial $r_{k+1,\lambda}= r_k+\alpha_\lambda(r_{k+1}-r_k)$ with
\begin{align*}
\alpha_\lambda \coloneqq{}& \argmin_{\alpha\in\R}\Big(\norm{\mathsf{r_k}+\alpha(\mathsf{r_{k+1}-r_k})}_Y^2+\lambda a_{k+1}(r_{k+1})^2\alpha^2\Big) \\
={}&\frac{\norm{\mathsf{r_k}}_Y^2-\norm{\mathsf{r_{k+1}}}_Y^2} {\norm{\mathsf{r_k}}_Y^2-\norm{\mathsf{r_{k+1}}}_Y^2+\lambda a_{k+1}(r_{k+1})^2},
\end{align*}
where we used that $\scapro{\mathsf{r_{k+1}-r_k}}{\mathsf{r_{k+1}}}_{Y}=0$ by \cref{LemOrthogonalityY}. The numerator is just $R_k^2-R_{k+1}^2$, which is strictly positive for $k+1\le d$ so that $\alpha_\lambda\in(0,1]$.
\end{proof}

\subsection{Proofs for Section~\ref{SecAnalysisPredError}}\label{SecProofsAnalysisPredError}

\begin{proof}[Proof of \cref{Lemrkg}]
Noting that $r_{t,<} \le 1$, we have
\begin{equation*}
\norm{\mathsf{r_{t,<}}Y}^2 - \norm{\mathsf{r_{t,<}^{1/2}}Y}^2 = -\scapro{\mathsf{r_{t,<}(1-r_{t,<})}Y}{Y} = - \norm{\mathsf{(r_{t,<}(1-r_{t,<}))^{1/2}}Y}^2,
\end{equation*}
and analogously
\begin{equation*}
\norm{\mathsf{r_{t,<}} g}^2 - \norm{\mathsf{r_{t,<}^{1/2}}g}^2 = - \norm{\mathsf{(r_{t,<}(1-r_{t,<}))^{1/2}}g}^2.
\end{equation*}
Hence, the approximation error can be expressed as
\begin{align*}
A_{t,\lambda} = &\norm{\mathsf{r_{t,<}}g}^2+\norm{\mathsf{r_{t,>}}Y}^2 \\
&-\big(\norm{\mathsf{(r_{t,<}(1-r_{t,<}))^{1/2}}Y}^2
-\norm{\mathsf{(r_{t,<}(1-r_{t,<}))^{1/2}}g}^2\big)
\end{align*}
with
\begin{align*}
&\norm{\mathsf{(r_{t,<}(1-r_{t,<}))^{1/2}}Y}^2 \\
&= \norm{\mathsf{(r_{t,<}(1-r_{t,<}))^{1/2}}g}^2 + \norm{\mathsf{(r_{t,<}(1-r_{t,<}))^{1/2}}\xi}^2 + 2 \scapro{\mathsf{r_{t,<}}g}{(\mathsf{1-r_{t,<}})\xi}.
\end{align*}
This implies
\begin{equation*}
\norm{\mathsf{r_{t,<}}g}^2 \le A_{t,\lambda}+\norm{\mathsf{(r_{t,<}(1-r_{t,<}))^{1/2}}\xi}^2+2\scapro{\mathsf{r_{t,<}}g}{(\mathsf{1-r_{t,<}})\xi}.
\end{equation*}
By the Cauchy--Schwarz inequality and the definition of $S_{t,\lambda}$, we deduce
\begin{align*}
\norm{\mathsf{r_{t,<}}g}^2 &\le A_{t,\lambda}+\norm{\mathsf{(r_{t,<}(1-r_{t,<}))^{1/2}}\xi}^2+2 \norm{\mathsf{r_{t,<}}g} \norm{(\mathsf{1-r_{t,<}})\xi} \\
&\le A_{t,\lambda} + S_{t,\lambda} + 2 S_{t,\lambda}^{1/2} \norm{\mathsf{r_{t,<}}g}.
\end{align*}
For $x,A,B \ge 0$, $x^2\le Ax+B$ implies $x^2\le A^2+2B$, which is immediate for $x\le A$ and follows for $x > A$ from $(x+A)(x-A)\le 2x(x-A)\le 2B$.
Noting that $B\coloneqq A_{t,\lambda} + S_{t,\lambda} \ge \tfrac{1}{2} \norm{\widehat{g}_t-g}^2 \ge 0$ by \cref{PropWeakLoss}, we obtain $\norm{\mathsf{r_{t,<}}g}^2  \le 6S_{t,\lambda}+2A_{t,\lambda}$.
\end{proof}

\begin{proof}[Proof of \cref{PropWeakErrorLowerBound}]
We prove the claim analogously to \citet[Proposition~4.23]{Joh2017}. Without loss of generality, assume that $v_1,\dots,v_D$ are the first $D$ canonical unit vectors. Let $\Theta^m(\eta)$, $m=1,\dots,D$, for $\eta \ge 0$ denote the set of $f \in \R^P$ that satisfy $\abs{f_i} \le \eta$ for $i=1,\dots,m$ and $f_i=0$ for $i=m+1,\dots,P$ in case of $P > m$. Define the loss
\begin{equation*}
L_A(\widehat{f},f) \coloneqq \norm{A(f-\widehat{f})}^2 = \sum_{i=1}^D \lambda_i^2 (f_i-\widehat{f}_i)^2 = \sum_{i=1}^D L_{A,i}(\widehat{f}_i, f_i),
\end{equation*}
where $L_{A,i}(\widehat{f}_i, f_i) \coloneqq \lambda_i^2 (f_i-\widehat{f}_i)^2$, $i=1,\dots,D$. The function $\widehat{f}_i \mapsto L_{A,i}(\widehat{f}_i, f_i)$ is convex and lower semicontinuous for any $f_i$. Thus, we can apply \citet[Proposition~4.16]{Joh2017} and obtain
\begin{align*}
\inf_{\widehat{f}} \sup_{f \in \Theta^m(\eta)} \E_f\big[\norm{A(f-\widehat{f})}^2\big] &= \sum_{i=1}^m \Big(\inf_{\widehat{f}_i} \sup_{f_i \in [-\eta,\eta]} \E_f\big[\lambda_i^2 (f_i-\widehat{f}_i)^2\big]\Big) \\
&= \sum_{i=1}^m \lambda_i^2 \Big(\inf_{\widehat{f}_i} \sup_{f_i \in [-\eta,\eta]} \E_f\big[(f_i-\widehat{f}_i)^2\big]\Big) \\
&= \gamma \sum_{i=1}^m \lambda_i^2 (\eta^2 \wedge \delta^2 \lambda_i^{-2})
\end{align*}
with $\gamma \in [2/5, 1]$, where we used \citet[Eq.~(4.40)]{Joh2017} for the last equality. Setting $\eta \coloneqq \delta m^p$, under \eqrefAssPSDlower{}, we deduce
\begin{equation*}
\inf_{\widehat{f}} \sup_{f \in \Theta^m(\delta m^p)} \E_f\big[\norm{A(f-\widehat{f})}^2\big] = \gamma \sum_{i=1}^m \lambda_i^2 \eta^2 \wedge \delta^2 \ge (c_A \wedge 1)^2 \gamma m \delta^2.
\end{equation*}
We have
\begin{equation*}
\sum_{i=1}^m \lambda_i^{-4\mu} (\delta m^p)^2 \le \delta^2 m^{2p} c_A^{-4\mu} \sum_{i=1}^m i^{4\mu p} \le c_A^{-4\mu} \delta^2 m^{4 \mu p + 2p + 1}.
\end{equation*}
Hence, for $\Theta^m(\delta m^p)$ to be contained in the class of signals $f$ satisfying $\sourcecondf$, it suffices that $c_A^{-4\mu} \delta^2 m^{4 \mu p + 2p + 1} \le R^2$. Let $m_1$ be the largest integer such that $c_A^{-4\mu} m_1^{4 \mu p + 2p + 1} \le R^2/\delta^2$ and $m_0 \in \R$ the solution to $c_A^{-4\mu} m_0^{4 \mu p + 2p + 1} = R^2/\delta^2$. If $m_0 \ge 4$, then $m_1/m_0 \ge 1/2$, which can be shown by contradiction. For $\delta/R \le \tilde{c}_{\mu,p,c_A}$, we conclude
\begin{align*}
&\inf_{\widehat{f}} \sup_{f: \, \sourcecondf} \E_f\big[\norm{A(f-\widehat{f})}^2\big] \\
&\ge \inf_{\widehat{f}} \sup_{f \in \Theta^m(\delta m_1^p)} \E_f\big[\norm{A(f-\widehat{f})}^2\big] \\
&\ge (c_A \wedge 1)^2 \gamma m_1 \delta^2 \ge (c_A \wedge 1)^2\gamma \frac{m_0}{2} \delta^2 \\
&= (c_A \wedge 1)^2 c_A^{4\mu/(4 \mu p +2p + 1)} \frac{\gamma}{2} R^{2/(4 \mu p+2p+1)} \delta^{(8 \mu p+4p)/(4 \mu p+2p+1)}.
\end{align*}
This gives the claim.
\end{proof}

\subsection{Proofs for Section~\ref{SecTransferReconstructionError}} \label{SecProofsTransferReconstructionError}

\begin{proof}[Proof of \cref{PropBoundsStrongErrorEarlyStopped}] \leavevmode
\begin{enumerate}
\item Consider the event of \cref{LemRhoBound}, which holds with probability at least $1-e^{-z}$. If $\abs{r_{\tau}'(0)} < \lambda_D^{-2}$, then by the first implication of  \cref{LemRhoBound}
\begin{equation*}
\abs{r_{\tau}'(0)} \lesssim_{p,c_A} (\delta^{-2}S_{\tau,\lambda})^{2p} + (\log d)^{2p} + z^{2p}
\end{equation*}
and the assertion follows from \cref{CorRoughBoundStrongErrorSobolevCond}.
If $\abs{r_{\tau}'(0)} \ge \lambda_D^{-2}$, then we use the rough bound
\begin{equation}\label{EqRoughRecError}
\norm{\widehat f_\tau-f^\dagger}^2\le \lambda_D^{-2}\norm{A(\widehat f_\tau-f^\dagger)}^2\le 4\lambda_D^{-2}M_{\tau,\lambda},
\end{equation}
which is due to the prediction error bound from \cref{PropWeakLoss}.
Moreover, we then have by \ref{AssPSD} and the second implication of \cref{LemRhoBound}
\begin{equation*}
\lambda_D^{-2}\lesssim_{c_A} D^{2p} \lesssim_{p,c_A} (\delta^{-2}S_{\tau,\lambda})^{2p} + (\log d)^{2p} + z^{2p},
\end{equation*}
which inserted into \eqref{EqRoughRecError} yields an even sharper bound than claimed.

\item Fix $z>0$ and note $M_{\tau,\lambda}=A_{\tau,\lambda} \wedge S_{\tau,\lambda}+\abs{A_{\tau,\lambda} - S_{\tau,\lambda}}$. In the case $\tau > 0$, we have by \cref{LemPropErrorTerms:BoundStochError}, the source condition on $f$ and \eqref{EqUpperBoundApproxErrorSource}
\begin{align}
A_{\tau,\lambda} \wedge S_{\tau,\lambda}
&\le \big( R^2(\mu+1/2)^{2\mu + 1} \abs{r_{\tau}'(0)}^{-2\mu-1} \wedge \norm{(\mathsf{(\abs{r_{\tau}'(0)}x)^{1/2} \wedge 1})\xi}^2 \big)  \notag \\
&\le \inf_{\rho > 0} \big( R^2(\mu+1/2)^{2\mu + 1} \rho^{-2\mu-1} + \norm{(\mathsf{(\rho x)^{1/2} \wedge 1})\xi}^2 \big) \notag \\
&\le \inf_{\rho>0}\Big(R^2 (\mu + 1/2)^{2\mu + 1} \rho^{-2\mu-1} + \delta^2 \sum_{i=1}^D (\rho \lambda_i^2 \wedge 1)Z_i^2\Big), \label{EqUpperBoundEarlyStopStrongError}
\end{align}
where $Z_i \coloneqq \scapro{Z}{u_i}$, $i=1,\dots,D$.
For $\tau=0$, the bound \eqref{EqUpperBoundEarlyStopStrongError} continues to hold due to $A_{\tau,\lambda} \wedge S_{\tau,\lambda}=0$.
Choosing $\rho \thicksim (R\delta^{-1})^{4p/(4\mu p +2p+1)}$ gives the right order of the first summand. For the second summand, we obtain similarly to~\eqref{EqBoundExpWeakStochError}
\begin{equation*}
\sum_{i=1}^D (\rho \lambda_i^2 \wedge 1)^2 \le (C_A \vee 1)^4 \sum_{i=1}^D (\rho^2 i^{-4p} \wedge 1) \le (C_A \vee 1)^4 \tfrac{4p}{4p-1} \rho^{1/(2p)}.
\end{equation*}
The $\chi^2$-concentration bound of \citet[Lemma 1]{LauMas2000} gives
\begin{equation*}
\PP \Big( \textstyle \sum_{i=1}^D (\rho \lambda_i^2 \wedge 1)(Z_i^2-1) \ge 2(C_A \vee 1)^2 \sqrt{\tfrac{4p}{4p-1}}\rho^{1/(4p)}\sqrt{z}+2z \Big) \le e^{-z}.
\end{equation*}
From $2(C_A \vee 1)^2\sqrt{\tfrac{4p}{4p-1}}\rho^{1/(4p)}\sqrt{z}+2z \le  (C_A \vee 1)^4\tfrac{4p}{4p-1} \rho^{1/(2p)} + 3z$
we deduce with \eqref{EqBoundExpWeakStochError}
\begin{align*}
\sum_{i=1}^D (\rho \lambda_i^2 \wedge 1)Z_i^2 &\le \sum_{i=1}^D (\rho \lambda_i^2 \wedge 1) + (C_A \vee 1)^4\tfrac{4p}{4p-1} \rho^{1/(2p)} + 3z \\ &\le (C_A \vee 1)^4\tfrac{4p}{2p-1} \rho^{1/(2p)} + 3z
\end{align*}
with probability at least $1-e^{-z}$. Plugging into \eqref{EqUpperBoundEarlyStopStrongError} for our choice of $\rho$ yields the claim.
\item Fix $z \ge 1$ and put $\Delta:=\abs{A_{\tau,\lambda} - S_{\tau,\lambda}}$ for short. In the case $\kappa \le \norm{Y}^2$, we have $R_{\tau}^2 = \kappa$ such that by \eqref{EqDiffWeakErrors}
\begin{equation*}
\Delta = \abs{\kappa - \norm{\xi}^2 - 2 \scapro{\xi}{\mathsf{r_{\tau,<}}g}}.
\end{equation*}
Otherwise, if $\kappa > \norm{Y}^2$, we stop at $\tau=0$ and
\begin{align*}
0 \le \Delta = A_{0,\lambda} = \norm{g}^2 &= \norm{Y}^2 + \norm{\xi}^2 -2 \scapro{\xi}{Y} \\
&= \norm{Y}^2 - \norm{\xi}^2 -2 \scapro{\xi}{\mathsf{r_{0,<}}g} < \kappa - \norm{\xi}^2 -2 \scapro{\xi}{\mathsf{r_{0,<}}g}.
\end{align*}
Hence, using the assumption on $\kappa$, we can in both cases bound
\begin{align}
\Delta
&\le \abs{\kappa - \norm{\xi}^2 - 2 \scapro{\xi}{\mathsf{r_{\tau,<}}g}} \notag \\
&\le \bigabs{\norm{\xi}^2 - \delta^2 D} + \abs{\kappa - \delta^2 D} + 2 \abs{\scapro{\xi}{\mathsf{r_{\tau,<}}g}} \notag\\
&\le \bigabs{\norm{\xi}^2 - \delta^2 D} + 2 \abs{\scapro{\xi}{\mathsf{r_{\tau,<}}g}} + C_{\kappa}\delta^2 \sqrt{D}. \label{EqFirstBoundKappa}
\end{align}
By the $\chi^2$-concentration bound of \citet[Lemma~1]{LauMas2000}, we have
\begin{equation}
\bigabs{\norm{\xi}^2 - \delta^2 D} \le 2 \delta^2\sqrt{D} \sqrt{z} +2\delta^2z \label{EqLaurentMassart}
\end{equation}
with probability at least $1-2e^{-z}$.

Let us first assume $\abs{r_{\tau}'(0)} < \lambda_D^{-2}$. \cref{Lemrkg} yields $\norm{\mathsf{r_{\tau,<}}g}^2 \le 8M_{\tau,\lambda}$.  We then obtain by \eqref{EqCrossTermBoundHighProbPSD}, \eqref{EqFirstBoundKappa} and \eqref{EqLaurentMassart}
\begin{align*}
&\delta^{-2}\Delta \\
&\lesssim_{p,c_A,C_A,C_{\kappa}} (\delta^{-2} M_{\tau,\lambda})^{1/2}\big((\delta^{-2}S_{\tau,\lambda})^{p/(2p+1)}+\sqrt{\log d}+\sqrt{z}\big) + \sqrt{D} \sqrt{z} + z \\
&\lesssim_{p,c_A,C_A,C_{\kappa}} (\delta^{-2}M_{\tau,\lambda})^{(4p+1)/(4p+2)} + (\delta^{-2} M_{\tau,\lambda})^{1/2} \big(\sqrt{\log d} +  \sqrt{z}\big) + \sqrt{D} \sqrt{z} + z
\end{align*}
with probability at least $1-4e^{-z}$. Choosing as conjugate exponents $\tilde{p} \coloneqq (4p+1)/(2p+1)$ and $\tilde{q} \coloneqq (4p+1)/(2p)$, Young's inequality implies
\begin{align*}
&(\delta^{-2} M_{\tau,\lambda})^{1/2} (\sqrt{\log d}+\sqrt{z}) \\
&\le 2(\delta^{-2}M_{\tau,\lambda})^{(4p+1)/(4p+2)}+(\log d)^{(4p+1)/(4p)}+z^{(4p+1)/(4p)}.
\end{align*}
We conclude
\begin{equation}
\delta^{-2}\Delta \lesssim_{p,c_A,C_A,C_{\kappa}} (\delta^{-2}M_{\tau,\lambda})^{(4p+1)/(4p+2)} + \sqrt{D}\sqrt{z} + z^{(4p+1)/(4p)} \label{EqBoundKappaStochErrorPolDecay}
\end{equation}
with probability at least $1-4e^{-z}$.
Hence, we have by \subcref{PropUpperBoundEarlyStopStrongError} and \eqref{EqBoundKappaStochErrorPolDecay}
\begin{align*}
\delta^{-2} M_{\tau,\lambda} \lesssim_{\mu,p,c_A,C_A,C_{\kappa}} &(R \delta^{-1})^{2/(4 \mu p + 2p + 1)} \\
&+ (\delta^{-2} M_{\tau,\lambda})^{(4p+1)/(4p+2)} + \sqrt{D} \sqrt{z} + z^{(4p+1)/(4p)}
\end{align*}
with probability at least $1-5e^{-z}$. Due to $R\delta^{-1}\ge 1$ and $(4p+1)/(4p+2)<1$, we arrive for $z\ge 1$ at
\begin{equation}\label{EqMtauBound}
\delta^{-2} M_{\tau,\lambda} \lesssim_{\mu,p,c_A,C_A,C_{\kappa}} (R \delta^{-1})^{2/(4 \mu p + 2p + 1)} + \sqrt{D} \sqrt{z} + z^{(4p+1)/(4p)}
\end{equation}
with probability at least $1-5e^{-z}$.

In the case $\abs{r_{\tau}'(0)} \ge \lambda_D^{-2}$, we have by \cref{LemrtBound}
\begin{equation*}
S_{\tau,\lambda}\ge \norm{\mathsf{(1-{\exp(-\lambda_D^{-2} x)})^{1/2}}\xi}^2\ge (1-e^{-1})\norm{\xi}^2
\end{equation*}
and bound directly
\begin{equation*}
2 \abs{\scapro{\xi}{\mathsf{r_{\tau,<}}g}}\le 2\norm{\xi}\norm{\mathsf{r_{\tau,<}}g} \le 2(1-e^{-1})^{-1/2} S_{\tau,\lambda}^{1/2}\norm{\mathsf{\exp(-\lambda_D^{-2}x)}g}.
\end{equation*}
In view of
\begin{equation*}
\sup_{x > 0} \exp(-2\lambda_D^{-2} x) x^{2\mu+1} = \big( (\mu +1/2)e^{-1} \big)^{2\mu + 1} \lambda_D^{4\mu + 2}
\end{equation*}
and the source condition $\sourcecondf$, we obtain
\begin{align*}
\norm{\mathsf{\exp(-\lambda_D^{-2}x)}g} &= \Big( \sum_{i=1}^D \big( \exp(-2 \lambda_D^{-2} \lambda_i^2) \lambda_i^{4\mu + 2} \big) \lambda_i^{-4\mu} f_i^2 \Big)^{1/2} \\
&\le \big( (\mu +1/2)e^{-1} \big)^{\mu + 1/2} \lambda_D^{2\mu + 1}R \lesssim_{\mu,C_A} R D^{-2\mu p - p}.
\end{align*}
Combining with \eqref{EqFirstBoundKappa} and \eqref{EqLaurentMassart} yields with probability at least $1-2e^{-z}$
\begin{equation*}
\delta^{-2}\Delta
\lesssim_{\mu,C_A,C_{\kappa}} (\delta^{-2} M_{\tau,\lambda})^{1/2}R\delta^{-1}D^{-2\mu p- p} + \sqrt{D} \sqrt{z} + z.
\end{equation*}
Using part \subcref{PropUpperBoundEarlyStopStrongError}, we deduce with probability at least $1-3e^{-z}$
\begin{equation*}
\delta^{-2} M_{\tau,\lambda} \lesssim_{\mu,p,C_A,C_{\kappa}}(R \delta^{-1})^{2/(4 \mu p + 2p + 1)} + (\delta^{-2} M_{\tau,\lambda})^{1/2}R\delta^{-1} D^{-2\mu p - p} + \sqrt{D} \sqrt{z} + z
\end{equation*}
and thus for $z\ge 1$
\begin{equation*}
\delta^{-2} M_{\tau,\lambda} \lesssim_{\mu,p,C_A,C_{\kappa}} (R \delta^{-1})^{2/(4 \mu p + 2p + 1)}+ R^2\delta^{-2} D^{-4\mu p - 2p} + \sqrt{D} \sqrt{z} + z.
\end{equation*}
From the assumption $D\gtrsim (R\delta^{-1})^{2/(4\mu p+2p+1)}$ we thus derive \eqref{EqMtauBound} again on the same event.
\qedhere
\end{enumerate}
\end{proof}

\begin{proof}[Proof of \cref{ThmEarlyStopStrongMinimax}]
Fix $z \ge 1$ and recall $M_{\tau,\lambda} \coloneqq A_{\tau,\lambda} \vee S_{\tau,\lambda}$. By \cref{PropBoundStrongErrorSource}, the inequality
\begin{alignat*}{2}
\norm{\widehat{f}_\tau-f^\dagger}^2 &\lesssim_{p,c_A} &&R^{2/(2\mu+1)}M_{\tau,\lambda}^{2\mu/(2\mu+1)}+\delta^{-4p}S_{\tau,\lambda}^{2p}M_{\tau,\lambda} \\
& &&+(\log d)^{2p}M_{\tau,\lambda} + z^{2p}M_{\tau,\lambda} \eqqcolon T_1+T_2+T_3+T_4
\end{alignat*}
holds true with probability at least $1-e^{-z}$. We control the summands using \cref{PropBoundKappa}. The following bounds hold simultaneously with probability at least $1-5e^{-z}$, where $\recrate$ is given in \eqref{EqDefRecRate}:

We have
\begin{align*}
T_1 \lesssim_{\mu,p,c_A,C_A,C_{\kappa}}
&\recrate \\
&+ R^{2/(2\mu+1)} \delta^{4\mu /(2\mu+1)} \big(\sqrt{D}\sqrt{z}+z^{(4p+1)/(4p)}\big)^{2\mu/(2\mu+1)}.
\end{align*}
Young's inequality with the conjugate exponents $\tilde{p}\coloneqq(2\mu + 1)(2 p +1)/(4 \mu p + 2p + 1)$ and $\tilde{q}\coloneqq(2\mu + 1)(2p+1)/(2\mu)$ yields
\begin{align*}
&\big(R^2 \delta^{-2}\big)^{1/(2\mu + 1)} \big(\sqrt{D}\sqrt{z}+z^{(4p+1)/(4p)}\big)^{2\mu/(2\mu+1)} \\
&\le \big(R^2 \delta^{-2}\big)^{(2p+1)/(4\mu p +2p+1)} + \big(\sqrt{D}\sqrt{z}+z^{(4p+1)/(4p)}\big)^{2p+1}
\end{align*}
such that
\begin{equation*}
T_1 \lesssim_{\mu,p,c_A,C_A,C_{\kappa}} \recrate + \delta^2 \big(\sqrt{D}\sqrt{z}+z^{(4p+1)/(4p)}\big)^{2p+1}.
\end{equation*}
Concerning $T_2$, we have
\begin{align*}
T_2 &\lesssim \delta^{-4p} M_{\tau,\lambda}^{2p+1} \\
&\lesssim_{\mu,p,c_A,C_A,C_{\kappa}} \recrate + \delta^2 \big(\sqrt{D}\sqrt{z}+z^{(4p+1)/(4p)}\big)^{2p+1}.
\end{align*}
Finally, by Young's inequality with $\tilde{p}\coloneqq (2p+1)/(2p)$ and $\tilde{q}\coloneqq 2p+1$, we deduce
\begin{equation*}
T_3 = \delta^{-4p}\big(\delta^2 \log d\big)^{2p} M_{\tau,\lambda}
\le \delta^{-4p}M_{\tau,\lambda}^{2p+1} + \delta^2 (\log d)^{2p+1}
\end{equation*}
and
\begin{equation*}
T_4 = \delta^{-4p} (\delta^2 z)^{2p} M_{\tau,\lambda} \le \delta^{-4p}M_{\tau,\lambda}^{2p+1} + \delta^2 z^{2p+1}.
\end{equation*}

We conclude the asserted high probability bound by combining all inequalities.
By \cref{LemHighProbExpBound}, the expectation bound follows directly from the high probability bound.

If $\sqrt{D} \lesssim (R^2 \delta^{-2})^{1/(4\mu p +2p+1)}$, then $\delta^2 D^{p+1/2} \lesssim \recrate$ holds and we obtain minimax adaptivity for the given range of regularity parameters $\mu$ by \cref{RemMinimaxReconstructionError}.
\end{proof}

\begin{proof}[Proof of \cref{PropDiscrepancyPrincipleOrderOpt}]
In the case $c\delta \le \norm{Y}$, we have $R_{\tau} = c\delta$ and \cref{PropWeakMinimaxDN} implies
\begin{equation*}
c \delta = R_{\tau} \le \delta + R(\mu+1/2)^{\mu+1/2} \abs{r_{\tau}'(0)}^{-\mu-1/2}.
\end{equation*}
Since $c>1$, this yields $\abs{r_{\tau}'(0)} \lesssim_{\mu,c} (R^2 \delta^{-2})^{1/(2\mu+1)}$. By combining \eqref{EqRoughBoundStrongError}--\eqref{EqInterpolIneq}, we have
\begin{equation*}
\norm{\widehat{f}_{\tau}-f^\dagger}^2 \lesssim \norm{\mathsf{r_{\tau,<}}g}^{4\mu/(2\mu+1)}R^{2/(2\mu+1)} + \abs{r_{\tau}'(0)}S_{\tau,\lambda} + \abs{r_{\tau}'(0)} \norm{A(\widehat{f}_{\tau}-f)}^2.
\end{equation*}
Noting that $\norm{\mathsf{r_{\tau,<}}g} \vee \norm{A(\widehat{f}_{\tau}-f)} \le R_{\tau} + \norm{\xi} \le (c+1)\delta$ and $S_{\tau,\lambda} \le \delta^2$ under \eqref{AssDN}, we obtain the claim by plugging in the bound on $\abs{r_{\tau}'(0)}$. Otherwise, if $c \delta > \norm{Y}$, we stop at $\tau = 0$ and obtain similar to \eqref{EqInterpolIneq}
\begin{equation*}
\norm{\widehat{f}_{\tau}-f^\dagger}^2 = \norm{f^\dagger}^2 \le \norm{g}^{4\mu / (2\mu + 1)} R^{2/(2\mu + 1)}.
\end{equation*}
Plugging in $\norm{g} \le \norm{Y}+\norm{\xi} < (c+1)\delta$ yields the claim.
\end{proof}

\subsection{Auxiliary result}

\begin{lemma} \label{LemHighProbExpBound}
Let $X$ be a real-valued random variable and $p\ge 1$ such that
\begin{equation*}
\PP(X \ge z^p) \le C e^{-z}
\end{equation*}
for all $z>c$ with  absolute constants $c,C \ge 0$. Then
\begin{equation*}
\E[X_+] \le c^p + Cp \E[Z^{p-1}],
\end{equation*}
where $Z\sim\Exp(1)$ is an exponentially distributed random variable with parameter $1$.
\end{lemma}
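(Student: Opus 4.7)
The plan is to use the layer-cake representation $\E[X_+] = \int_0^\infty \PP(X \ge t)\, dt$, split the integration range at $t = c^p$, and control the tail by the change of variables $t = z^p$ that matches the exponent shape in the hypothesis.

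First I would bound the bulk part trivially by $\int_0^{c^p} \PP(X\ge t)\, dt \le c^p$, using only $\PP(\cdot)\le 1$. This contributes the first summand $c^p$.

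For the tail, I would substitute $t = z^p$ with $dt = p z^{p-1}\, dz$, so that on $t \ge c^p$ we have $z \ge c$ and the hypothesis $\PP(X\ge z^p) \le Ce^{-z}$ applies. This gives
\begin{equation*}
\int_{c^p}^\infty \PP(X\ge t)\, dt = \int_c^\infty \PP(X\ge z^p)\, p z^{p-1}\, dz \le C p \int_c^\infty z^{p-1} e^{-z}\, dz \le C p \int_0^\infty z^{p-1} e^{-z}\, dz,
\end{equation*}
and the last integral is exactly $\E[Z^{p-1}] = \Gamma(p)$ for $Z\sim\Exp(1)$. Summing the two contributions yields the claimed bound. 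No step is really an obstacle here; the only mild subtlety is ensuring $p \ge 1$ so that $z^{p-1}$ is integrable near $0$ (which is why the substitution extends cleanly to $[0,\infty)$), and this is part of the hypothesis.
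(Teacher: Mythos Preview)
Your proof is correct and essentially identical to the paper's: both use the layer-cake formula for $\E[X_+]$, split at $c^p$, bound the first piece trivially by $c^p$, apply the substitution $t=z^p$ on the tail, use the hypothesis, and extend the integral to $[0,\infty)$ to recognise $\E[Z^{p-1}]$. The only cosmetic difference is that the paper writes the layer-cake step via indicators and Fubini rather than directly as $\int_0^\infty \PP(X\ge t)\,dt$.
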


\begin{proof}
We have
\begin{align*}
\E[X_+] &\le c^p + \E \Big[ \int_{c^p}^\infty \mathbf{1}(u \le X) du \Big] = c^p + \E \Big[ \int_c^\infty \mathbf{1}(z^p \le X) p z^{p-1} dz \Big] \\
&= c^p + p \int_c^\infty z^{p-1} \PP(X \ge z^p) dz \le c^p + Cp \int_0^\infty z^{p-1} e^{-z} dz = c^p + Cp \E[Z^{p-1}]. \qedhere
\end{align*}
\end{proof}


\paragraph{Acknowledgements}

We are grateful for comments by Tim Jahn that helped to improve the paper. Financial support by the Deutsche Forschungsgemeinschaft (DFG) through the research unit FOR 5381 \emph{Mathematical Statistics in the Information Age -- Statistical Efficiency and Computational Tractability} is gratefully acknowledged.

\bibliography{bibliography}

\end{document}